\documentclass[12pt,letterpaper,oneside,english]{amsbook}
\usepackage[T1]{fontenc}
\usepackage[latin9]{inputenc}
\usepackage{verbatim}
\usepackage{float}
\usepackage{url}
\usepackage{amstext}
\usepackage{amsthm}
\usepackage{amssymb}
\usepackage{graphicx}

\makeatletter


\newcommand{\noun}[1]{\textsc{#1}}
\providecommand{\tabularnewline}{\\}
\floatstyle{ruled}
\newfloat{algorithm}{tbp}{loa}
\providecommand{\algorithmname}{Algorithm}
\floatname{algorithm}{\protect\algorithmname}

\numberwithin{section}{chapter}
\numberwithin{equation}{section}
\numberwithin{figure}{section}
\theoremstyle{plain}
\newtheorem{thm}{\protect\theoremname}
\theoremstyle{definition}
\newtheorem{defn}[thm]{\protect\definitionname}
\newenvironment{lyxcode}
	{\par\begin{list}{}{
		\setlength{\rightmargin}{\leftmargin}
		\setlength{\listparindent}{0pt}
		\raggedright
		\setlength{\itemsep}{0pt}
		\setlength{\parsep}{0pt}
		\normalfont\ttfamily}%
	 \item[]}
	{\end{list}}
\theoremstyle{plain}
\newtheorem{lem}[thm]{\protect\lemmaname}
\theoremstyle{definition}
\newtheorem{xca}[thm]{\protect\exercisename}

\makeatother

\usepackage{babel}
\providecommand{\definitionname}{Definition}
\providecommand{\exercisename}{Exercise}
\providecommand{\lemmaname}{Lemma}
\providecommand{\theoremname}{Theorem}

\begin{document}
\title{Lecture Notes on Support Preconditioning}
\author{Sivan Toledo\\
Tel Aviv University}
\date{April 2007}
\maketitle

\chapter*{Preface}

These lecture notes were written in 2007. They were available since
then at \url{https://www.tau.ac.il/~stoledo/Support}. I compiled
them and uploaded them without any editing in December 2021.

\chapter{Motivation and Overview}

In this chapter we examine solvers for two families of linear systems
of equations. Our goal is motivate certain ideas and techniques, not
to explain in detail how the solvers work. We focus on the measured
behaviors of the solvers and on the fundamental questions that the
solvers and their performance pose.

\section{\label{sec:model meshes}The Model Problems: Two- and Three Dimensional
Meshes}

The coefficient matrices of the linear systems that we study arise
from two families of undirected graphs: two- and three-dimensional
meshes. The vertices of the graphs, which correspond to rows and columns
in the coefficient matrix $A$, are labeled $1$ through $n$. The
edges of the graph correspond to off-diagonal nonzeros in $A$, the
value of which is always $-1$. If $(i,j)$ is an edge in the graph,
then $A_{ij}=A_{ji}=-1$. If $(i,j)$ is not an edge in the mesh for
some $i\neq j$, then $A_{ij}=A_{ji}=0$. The value of the diagonal
elements in $A$ is selected to make the sums of elements in rows
$2,3,\ldots,n$ exactly $0$, and to make the sum of the elements
in the first row exactly $1$. We always number meshes so that vertices
that differ only in their $x$ coordinates are contiguous and numbered
from small $x$ coordinates to large ones; vertices that have the
same $z$ coordinate are also contiguous, numbered from small $y$
coordinates to high ones. Figure~\ref{fig:4-by-3 mesh and matrix}
shows an example of a two-dimensional mesh and the matrix that corresponds
to it. Figure~\ref{fig: 4-by-3-by-2 mesh} shows an example of a
small three-dimensional mesh.

\begin{figure}
\begin{centering}
\begin{minipage}[c][1\totalheight][t]{0.35\columnwidth}%
\noindent \includegraphics[width=1\columnwidth]{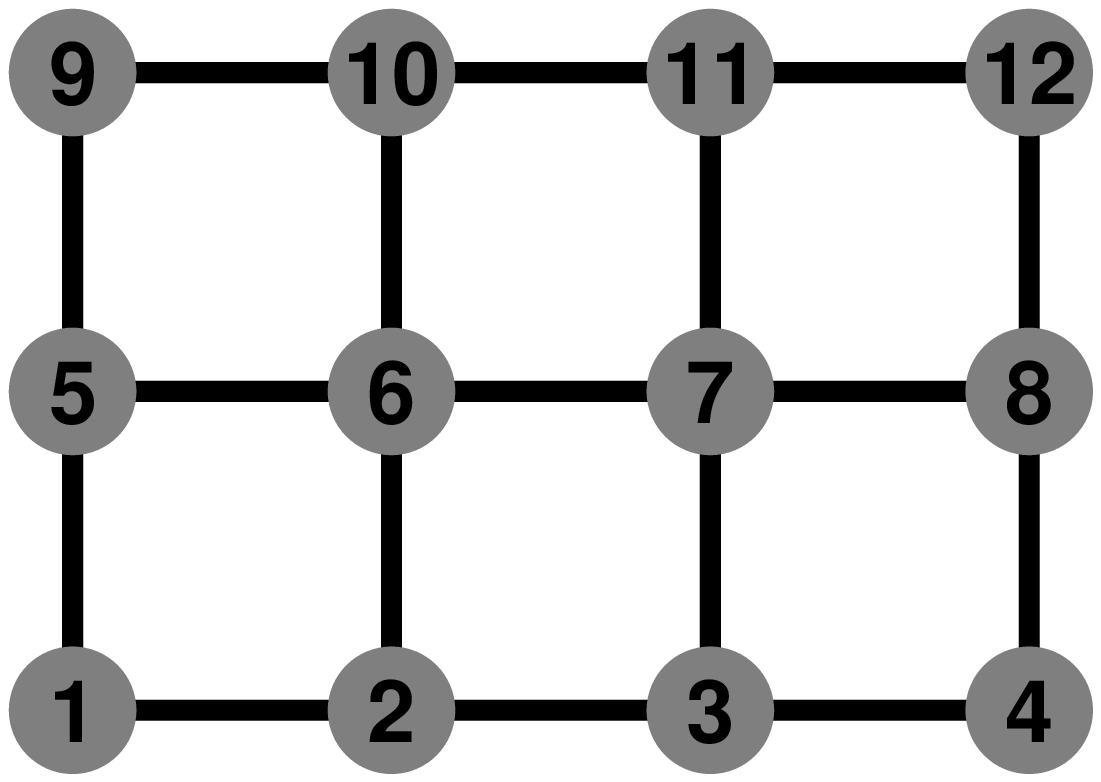}%
\end{minipage}\hfill{}%
\begin{minipage}[c][1\totalheight][t]{0.6\columnwidth}%
\noindent $A=\left[\begin{smallmatrix}3 & -1 &  &  & -1\\
-1 & 3 & -1 &  &  & -1\\
 & -1 & 3 & -1 &  &  & -1\\
 &  & -1 & 2 &  &  &  & -1\\
-1 &  &  &  & 3 & -1 &  &  & -1\\
 & -1 &  &  & -1 & 4 & -1 &  &  & -1\\
 &  & -1 &  &  & -1 & 4 & -1 &  &  & -1\\
 &  &  & -1 &  &  & -1 & 3 &  &  &  & -1\\
 &  &  &  & -1 &  &  &  & 2 & -1\\
 &  &  &  &  & -1 &  &  & -1 & 3 & -1\\
 &  &  &  &  &  & -1 &  &  & -1 & 3 & -1\\
 &  &  &  &  &  &  &  &  &  & -1 & 2
\end{smallmatrix}\right]$%
\end{minipage}
\par\end{centering}
\caption{\label{fig:4-by-3 mesh and matrix}A $4$-by-$3$ two-dimentional
mesh, with vertices labeled $1,\ldots,12$, and the coefficient matrix
$A$ that corresponds to it.}
\end{figure}

\begin{figure}
\begin{centering}
\includegraphics[width=0.8\columnwidth]{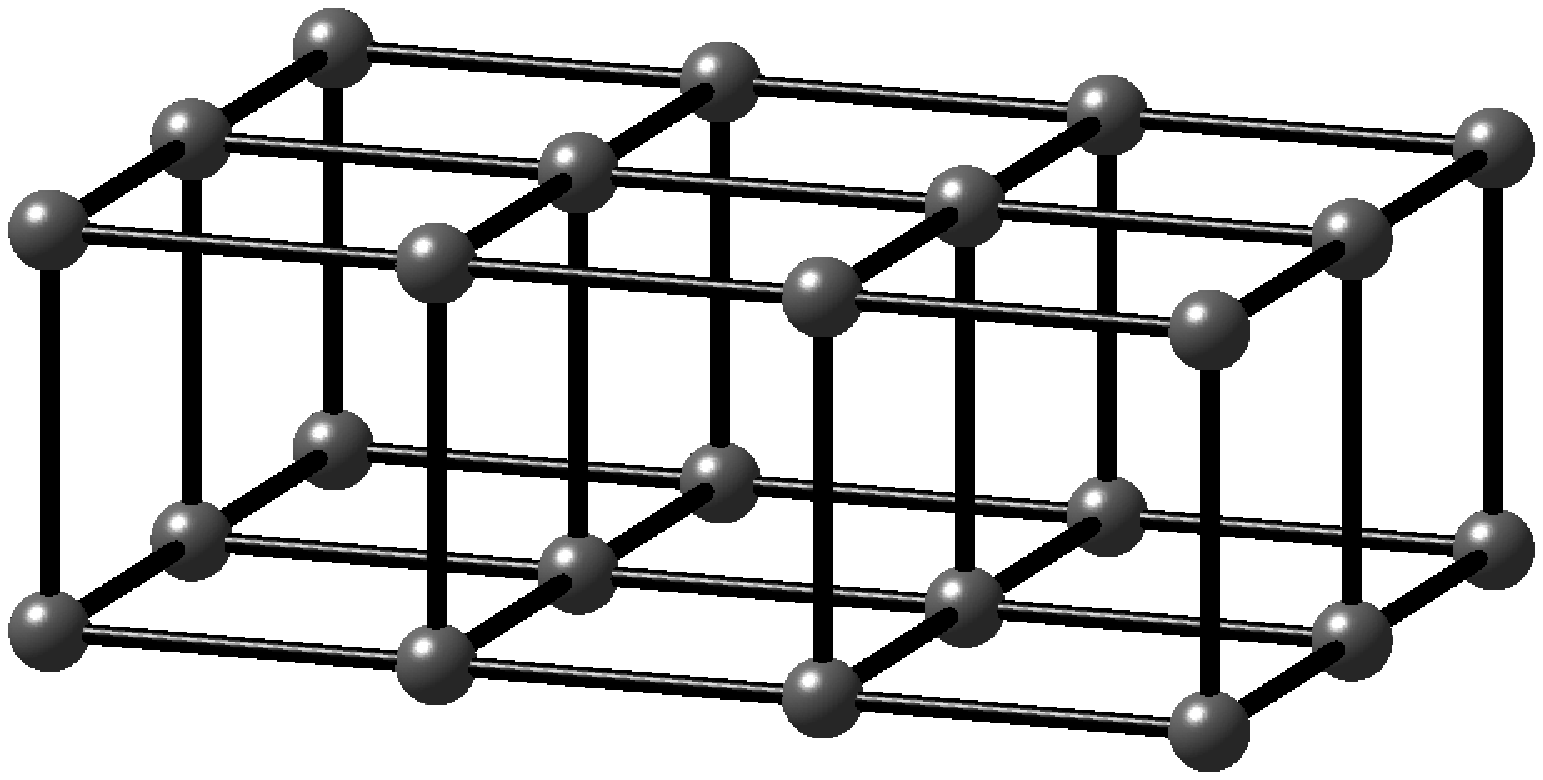}
\par\end{centering}
\caption{\label{fig: 4-by-3-by-2 mesh}A $4$-by-$3$-by-$2$ three-dimensional
mesh. The indices of the vertices are not shown.}
\end{figure}

Matrices similar to these do arise in applications, such as finite-difference
discretizations of partial differential equations. These particular
families of matrices are highly structured, and the structure allows
specialized solvers to be used, such as multigrid solvers and spectral
solvers. The methods that we present in this book are, however, applicable
to a much wider class of matrices, not only to those arising from
structured meshes. We use these matrices here only to illustrate and
motivate various linear solvers.

The experimental results that we present in the rest of the chapter
all use a solution vector $x$ with random uniformly-distributed elements
between $0$ and $1$.

\section{The Cholesky Factorization}

Our matrices are symmetric (by definition) and positive definite.
We do not show here that they are positive definite (all their eigenvalues
are positive); this will wait until a later chapter. But they are
positive definite.

The simplest way to solve a linear system $Ax=b$ with a positive
definite coefficient matrix $A$ is to factor $A$ into a produce
of a lower triangular factor $L$ and its transpose, $A=LL^{T}$.
When $A$ is symmetric positive definite, such a factorization always
exists. It is called the \emph{Cholesky} factorization, and it is
a variant of Gaussian Elimination. Once we compute the factorization,
we can solve $Ax=b$ by solving two triangular linear systems,
\begin{eqnarray*}
Ly & = & b\\
L^{T}x & = & y\;.
\end{eqnarray*}
Solvers for linear systems of equations that factor the coefficient
matrix into a product of simpler matrices are called \emph{direct
solvers}. Here the factorization is into triangular matrices, but
orthogonal, diagonal, tridiagonal, and block diagonal matrices are
also used as factors in direct solvers. 

The factorization can be computed recursively. If $A$ is $1$-by-$1$,
then $L=\sqrt{A_{11}}$. Otherwise, we can partition $A$ and $L$
into 4 blocks each, such that the diagonal blocks are square,
\[
A=\begin{bmatrix}A_{11} & A_{21}^{T}\\
A_{21} & A_{22}
\end{bmatrix}=\begin{bmatrix}L_{11}\\
L_{21} & L_{22}
\end{bmatrix}\begin{bmatrix}L_{11}^{T} & L_{21}^{T}\\
 & L_{22}^{T}
\end{bmatrix}=LL^{T}\;.
\]
This yields the following equations, which define the blocks of $L$,
\begin{eqnarray}
A_{11} & = & L_{11}L_{11}^{T}\label{eq:chol-11}\\
A_{21} & = & L_{21}L_{11}^{T}\label{eq:chol-21}\\
A_{22} & = & L_{21}L_{21}^{T}+L_{22}L_{22}^{T}\;.\label{eq:chol-22}
\end{eqnarray}
We compute $L$ by recursively solving Equation~\ref{eq:chol-11}
for $L_{11}$. Once we have computed $L_{11}$, we solve Equation~\ref{eq:chol-21}
for $L_{21}$ by substitution. The final step in the computation of
$L$ is to subtract $L_{21}L_{21}^{T}$ from $A_{22}$ and to factor
the difference recursively. 

This algorithm is reliable and numerically stable, but if implemented
naively, it is slow. Let $\phi(n)$ be the number of arithmetic operations
that this algorithm performs on a matrix of size $n$. To estimate
$\phi$, we partition $A$ so that $A_{11}$ is $1$-by-$1$. The
partitioning does not affect $\phi(n)$, but this particular partitioning
makes it easy to estimate $\phi(n)$. With this partitioning, computing
$L$ involves computing one square root, dividing an $(n-1)$-vector
by the root, computing the symmetric outer product of the scaled $(n-1)$-vector,
subtracting this outer product from a symmetric $(n-1)$-by-$(n-1)$
matrix, and recursively factoring the difference. We can set this
up as a recurrence relation,
\begin{eqnarray*}
\phi(n) & = & 1+(n-1)+(n-1)^{2}+\phi(n-1)\\
\phi(1) & = & 1\;.
\end{eqnarray*}
This yields
\begin{eqnarray*}
\phi(n) & = & n+\sum_{j=2}^{n}(j-1)+\sum_{j=2}^{n}(n-1)^{2}\\
 & = & n+\sum_{j=1}^{n-1}j+\sum_{j=1}^{n-1}j^{2}\\
 & = & n+\frac{n(n-1)}{2}+\frac{n(n-\frac{1}{2})(n-1)}{3}\\
 & = & \frac{n^{3}}{3}+o\left(n^{3}\right)\;.
\end{eqnarray*}
The little-$o$ notation means that we have neglected terms that grow
slower than $cn^{3}$ for any constant $c$.

Cubic growth means that for large meshes, a Cholesky-based solver
is slow. For a mesh with a million vertices, the solver needs to perform
more than $0.3\times10^{18}$ arithmetic operations. Even at a rate
of $10^{12}$ operations per second, the factorization takes almost
4 days. On a personal computer (say a 2005 model), the factorization
will take more than a year. A mesh with a million vertices may seem
like a large mesh, but it is not. The mesh has less than $2$ million
edges in two dimensions and less than $3$ million in three dimensions,
so its representation does not take a lot of space in memory. We can
do a lot better.

\section{Sparse Cholesky}

One way to speed up the factorization is to exploit sparsity in $A$.
If we inspect the nonzero pattern of $A$, we see that most of its
elements are zero. If $A$ is derived from a 2-dimensional mesh, only
5 diagonals contain nonzero elements. If $A$ is derived from a 3-dimensional
mesh, only 7 diagonals are nonzero. Figure~\ref{fig:11 by 15 pattern}
shows the nonzero pattern of such matrices. Only $O(n)$ elements
out of $n^{2}$ in $A$ are nonzero.

\begin{figure}
\def\mata{\vcenter{\hbox{\includegraphics[height=0.33\columnwidth]{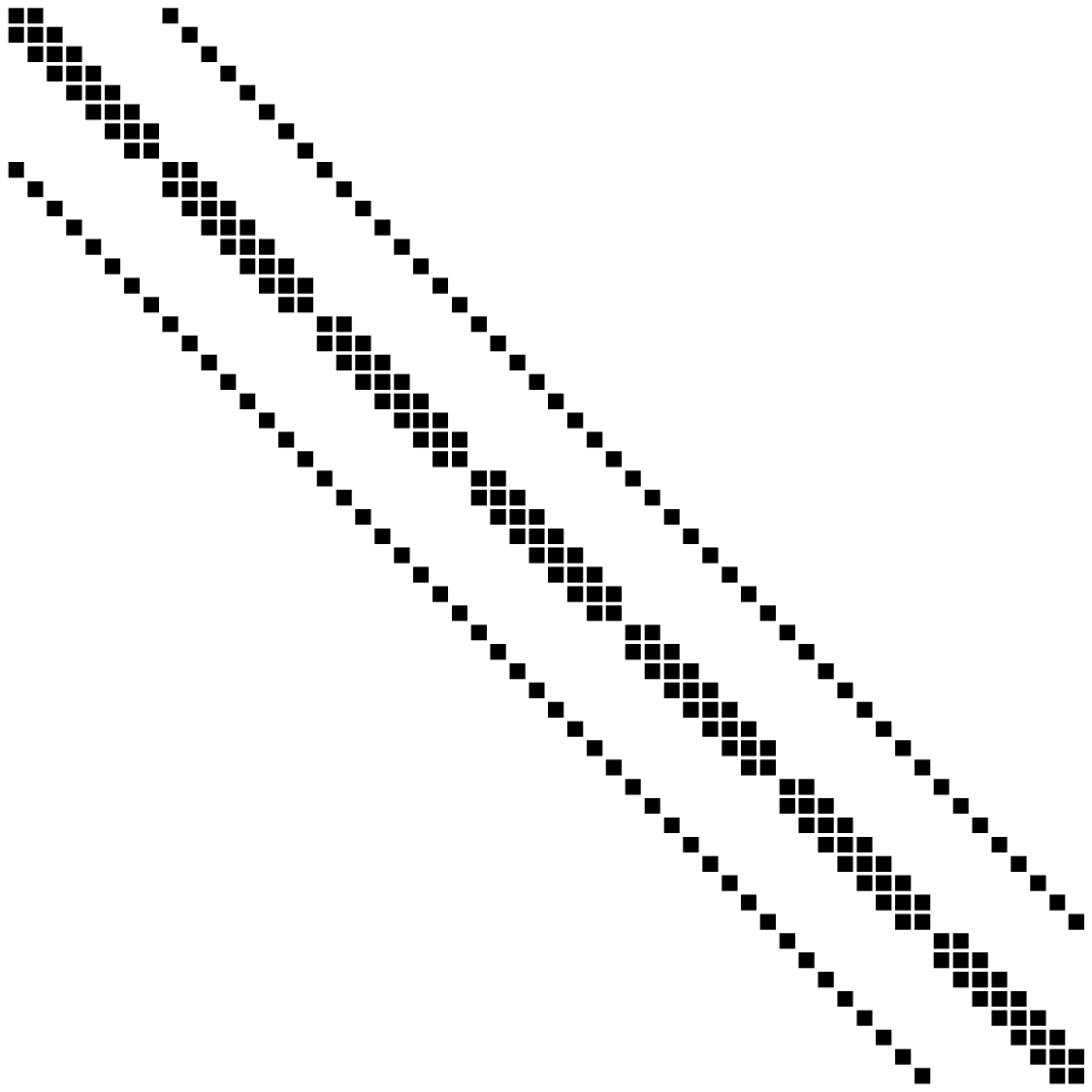}}}}

\def\matb{\vcenter{\hbox{\includegraphics[height=0.33\columnwidth]{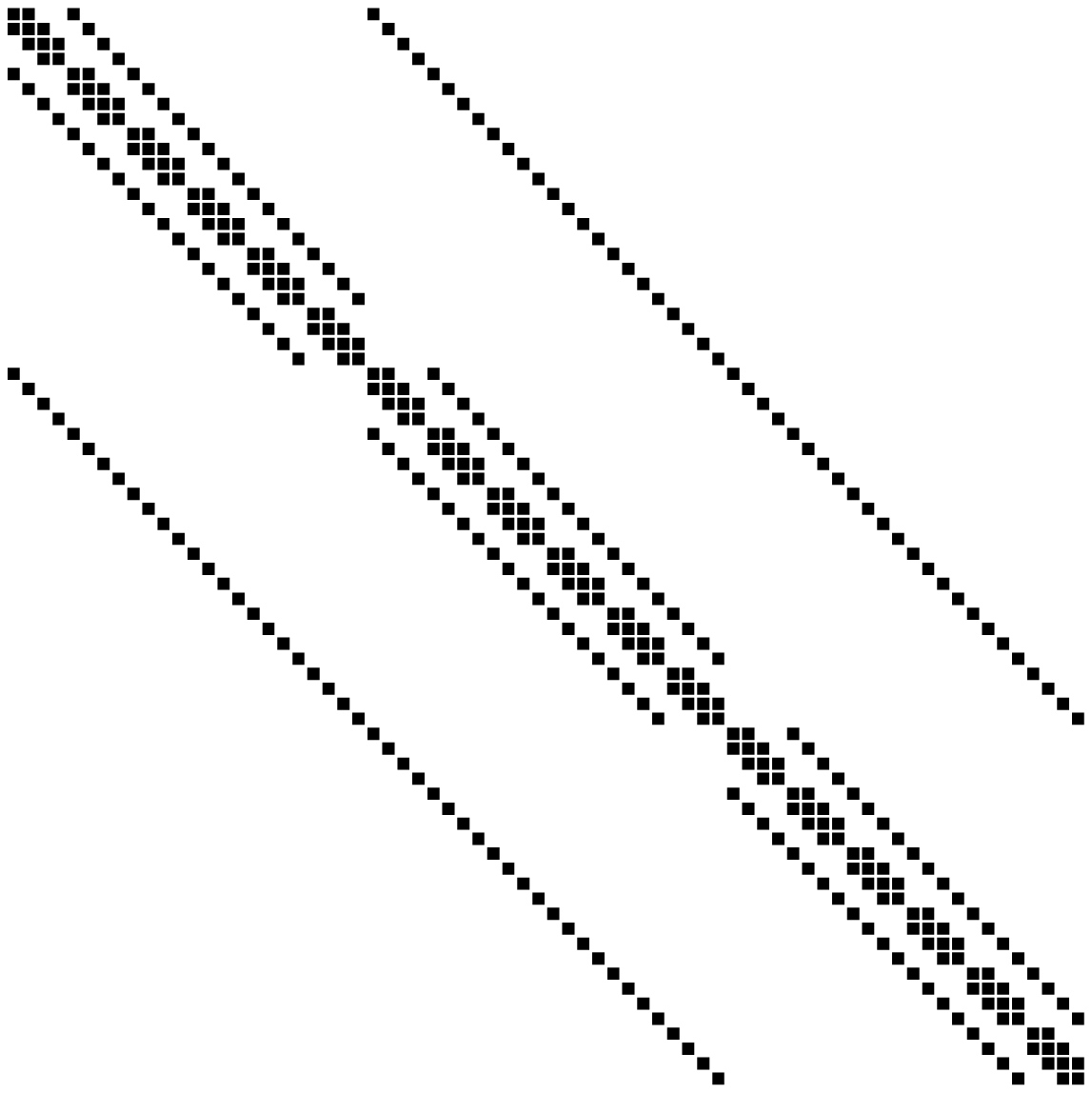}}}}

\newsavebox{\mat}

\newsavebox{\fact}

\savebox{\mat}{\raisebox{-1.45in}{\includegraphics[height=3in]{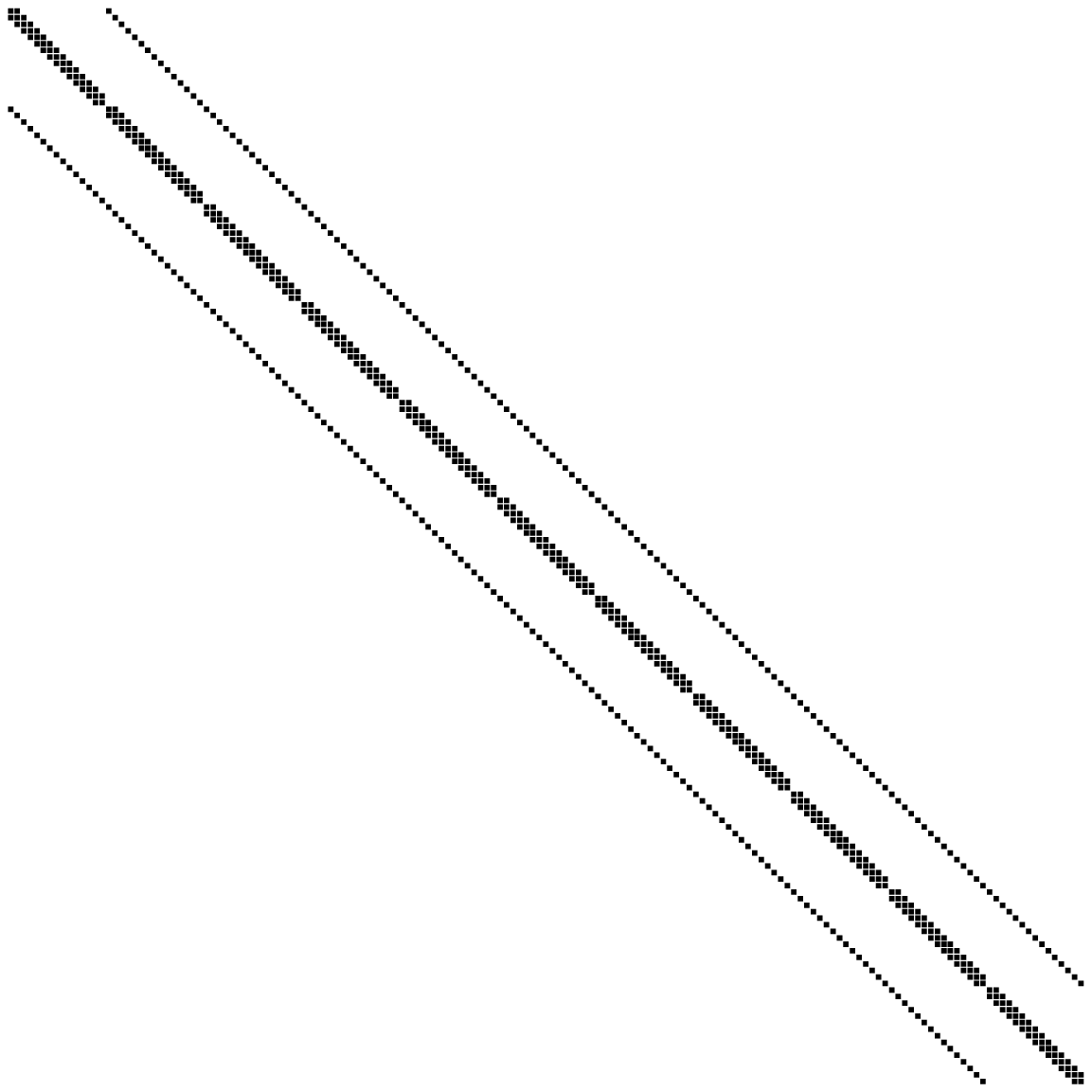}}}

\begin{centering}
$A_{\textrm{2D}}=\left[\mata\right]$\hfill{}$A_{\textrm{3D}}=\left[\matb\right]$
\par\end{centering}
\caption{\label{fig:11 by 15 pattern}The nonzero pattern of matrices corresponding
to an $8$-by-$7$ mesh (left) and to a $4$-by-$6$-by-$3$ mesh.
Nonzero elements are denoted by small squares. The other elements
are all zeros.}
\end{figure}

\emph{Sparse} factorization codes exploit the sparsity of $A$ and
avoid computations involving zero elements (except, sometimes, for
zeros that are created by exact cancellations). Figure~\ref{fig: meshes dense sparse}
compares the running times of a sparse Cholesky solver with that of
a dense Cholesky solver, which does not exploit zeros. The sparse
solver is clearly must faster, especially on 2-dimensional meshes.

\begin{figure}
\begin{centering}
\includegraphics[width=0.45\columnwidth]{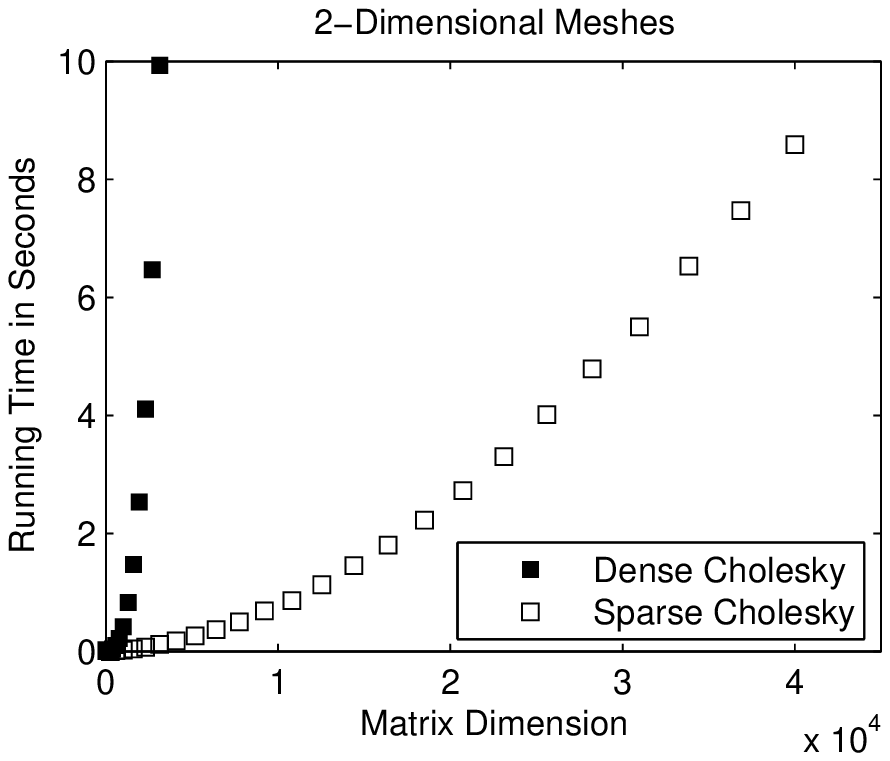}\hfill{}\includegraphics[width=0.45\columnwidth]{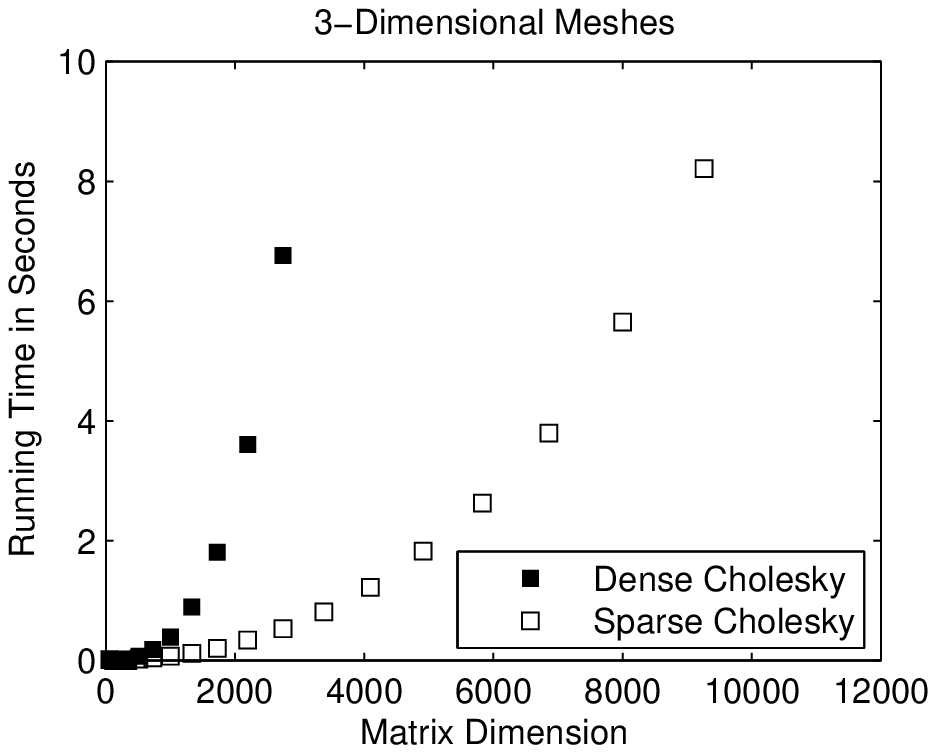}
\par\end{centering}
\caption{\label{fig: meshes dense sparse}The performance of dense and sparse
Cholesky on 2- and 3-dimensional meshes. The data in this graph, as
well as in all the other graphs in this chapter, was generated using
\noun{Matlab} 7.0.}
\end{figure}

To fully understand the performance of the sparse solver, it helps
to inspect the nonzero pattern of the Cholesky factor $L$, shown
in Figure~\ref{fig:11 by 15 with factor}. Exploiting sparsity pays
off because the factor is also sparse: it fills, but not completely.
It is easy to characterize exactly where the factor fills. It fills
almost completely within the band structure of $A$, the set of diagonals
$d$ that are closer to the main diagonal than a nonzero diagonal.
Outside this band structure, the elements of $L$ are all zero. For
a $\sqrt{n}$-by-$\sqrt{n}$ mesh, the outer diagonal is $\sqrt{n}$,
so the total amount of arithmetic in a sparse factorization is $n\cdot2\left(\sqrt{n}\right)^{2}+o(n^{2})=2n^{2}+o(n^{2})$.
For large $n$, this is a lot less than the dense $n^{3}/3+o(n^{2})$
bound. For three dimensional meshs, exploiting sparsity helps, but
not as much. In a matrix corresponding to a $\sqrt[3]{n}$-by-$\sqrt[3]{n}$-by-$\sqrt[3]{n}$
mesh, the outer nonzero diagonal is $n^{2/3}$, so the amount of arithmetic
is $2n^{7/3}+o(n^{7/3})$.

\begin{figure}
\def\mata{\vcenter{\hbox{\includegraphics[height=0.33\columnwidth]{matlab/plots/mesh2d_11_15_pattern.eps}}}}

\def\faca{\vcenter{\hbox{\includegraphics[height=0.33\columnwidth]{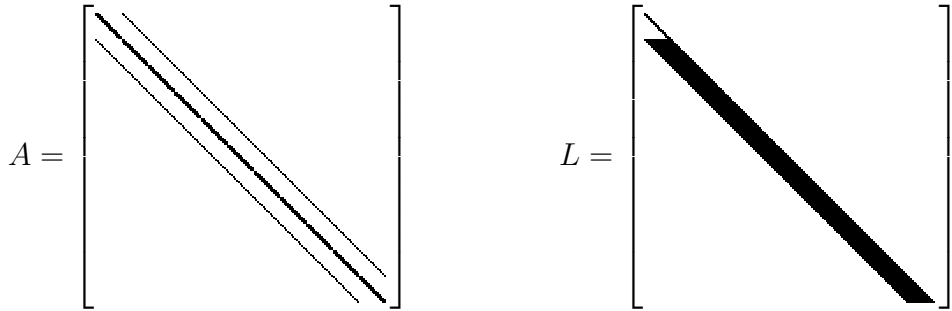}}}}

\savebox{\mat}{\raisebox{-0.166\columnwidth}{\includegraphics[width=0.35\columnwidth]{matlab/plots/mesh2d_11_15_pattern.eps}}}

\savebox{\fact}{\raisebox{-0.166\columnwidth}{\includegraphics[width=0.35\columnwidth]{matlab/plots/mesh2d_11_15_factor.eps}}}

\begin{centering}
$A=\left[\mata\right]$\hfill{}$L=\left[\faca\right]$
\par\end{centering}
\caption{\label{fig:11 by 15 with factor}The matrix of a $15$-by-$11$ mesh
(left) and its Cholesky factor (right).}
\end{figure}

\section{Preordering for Sparsity}

Sparse factorization codes exploit zero elements in the reduced matrices
and the factors. It turns out that by reordering the rows and columns
of a matrix we can reduce fill in the reduced matrices and the factors.
The most striking example for this phenomenon is the arrow matrix,
\[
A=\begin{bmatrix}n+1 & -1 & -1 & -1 & \cdots & -1\\
-1 & 1\\
-1 &  & 1\\
-1 &  &  & 1\\
\vdots &  &  &  & \ddots\\
-1 &  &  &  &  & 1
\end{bmatrix}\;.
\]
When we factor this matrix, it fills completely after the elimination
of the first column, since the outer product $L_{2\colon n,1}L_{2\colon n,1}^{T}$
is full (has no zeros). But if we reverse the order of the rows and
columns, we have
\begin{eqnarray*}
PAP^{T} & = & \left[\begin{smallmatrix} &  &  &  &  & 1\\
 &  &  &  & 1\\
 &  &  & 1\\
 &  & 1\\
 & \cdot\\
1
\end{smallmatrix}\right]\left[\begin{smallmatrix}n+1 & -1 & -1 & -1 & \cdots & -1\\
-1 & 1\\
-1 &  & 1\\
-1 &  &  & 1\\
\vdots &  &  &  & \ddots\\
-1 &  &  &  &  & 1
\end{smallmatrix}\right]\left[\begin{smallmatrix} &  &  &  &  & 1\\
 &  &  &  & 1\\
 &  &  & 1\\
 &  & 1\\
 & \cdot\\
1
\end{smallmatrix}\right]\\
 & = & \left[\begin{smallmatrix}1 &  &  &  &  & -1\\
 & 1 &  &  &  & -1\\
 &  & 1 &  &  & -1\\
 &  &  & 1 &  & -1\\
 &  &  &  & \ddots & \vdots\\
-1 & -1 & -1 & -1 & -1 & n+1
\end{smallmatrix}\right]\;.
\end{eqnarray*}
The Cholesky factor of $PAP^{T}$ is as sparse as $PAP^{T}$
\[
\textrm{chol}(PAP^{T})=\begin{bmatrix}1\\
 & 1\\
 &  & 1\\
 &  &  & 1\\
 &  &  &  & \ddots\\
-1 & -1 & -1 & -1 & \cdots & 1
\end{bmatrix}\;.
\]

Solving a linear system with a Cholesky factorization of a permuted
coefficient matrix is trivial. We apply the same permutation to the
right-hand-side $b$ before the two triangular solves, and apply the
inverse permutation after the triangular solves.

\begin{figure}
\def\mata{\vcenter{\hbox{\includegraphics[height=0.33\columnwidth]{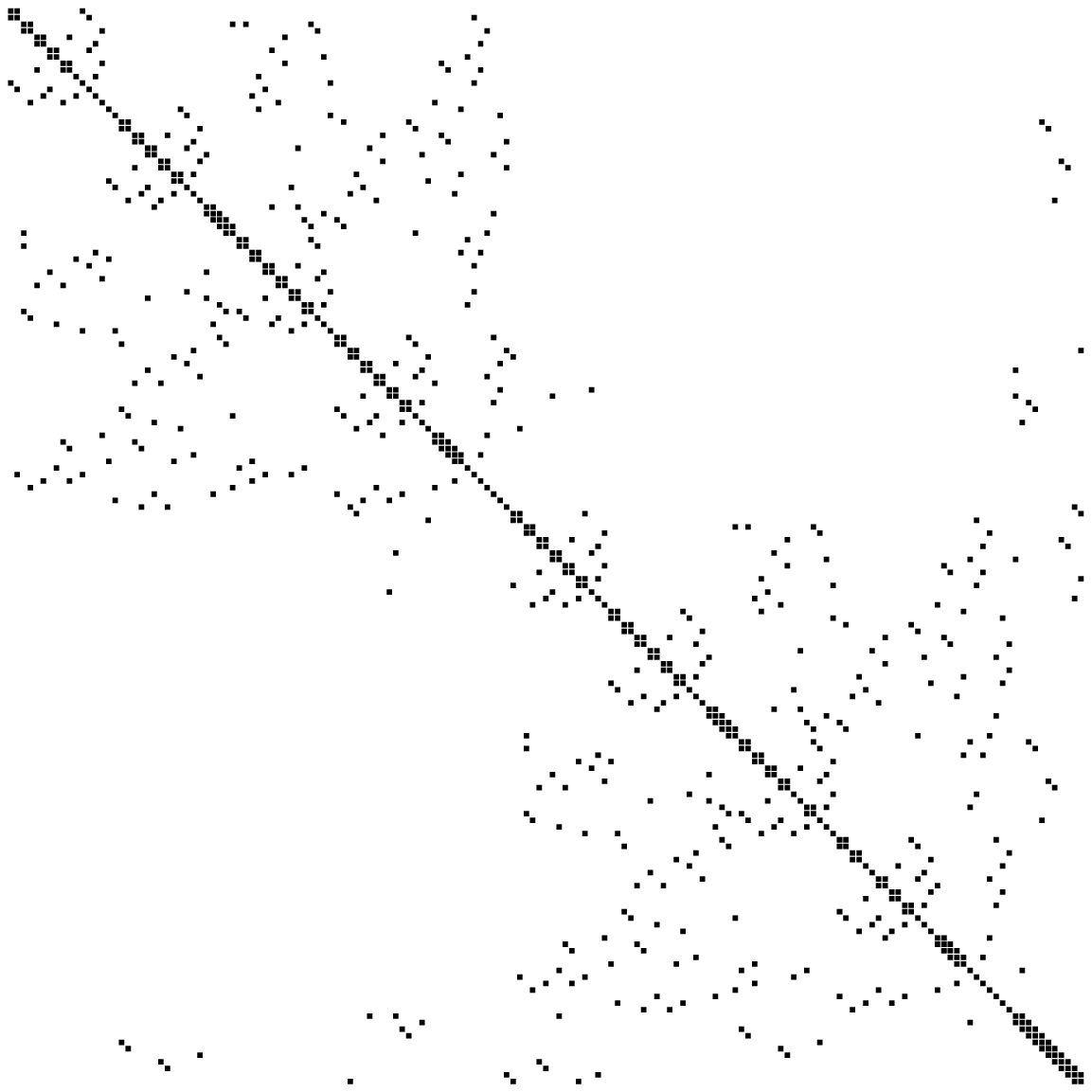}}}}

\def\faca{\vcenter{\hbox{\includegraphics[height=0.33\columnwidth]{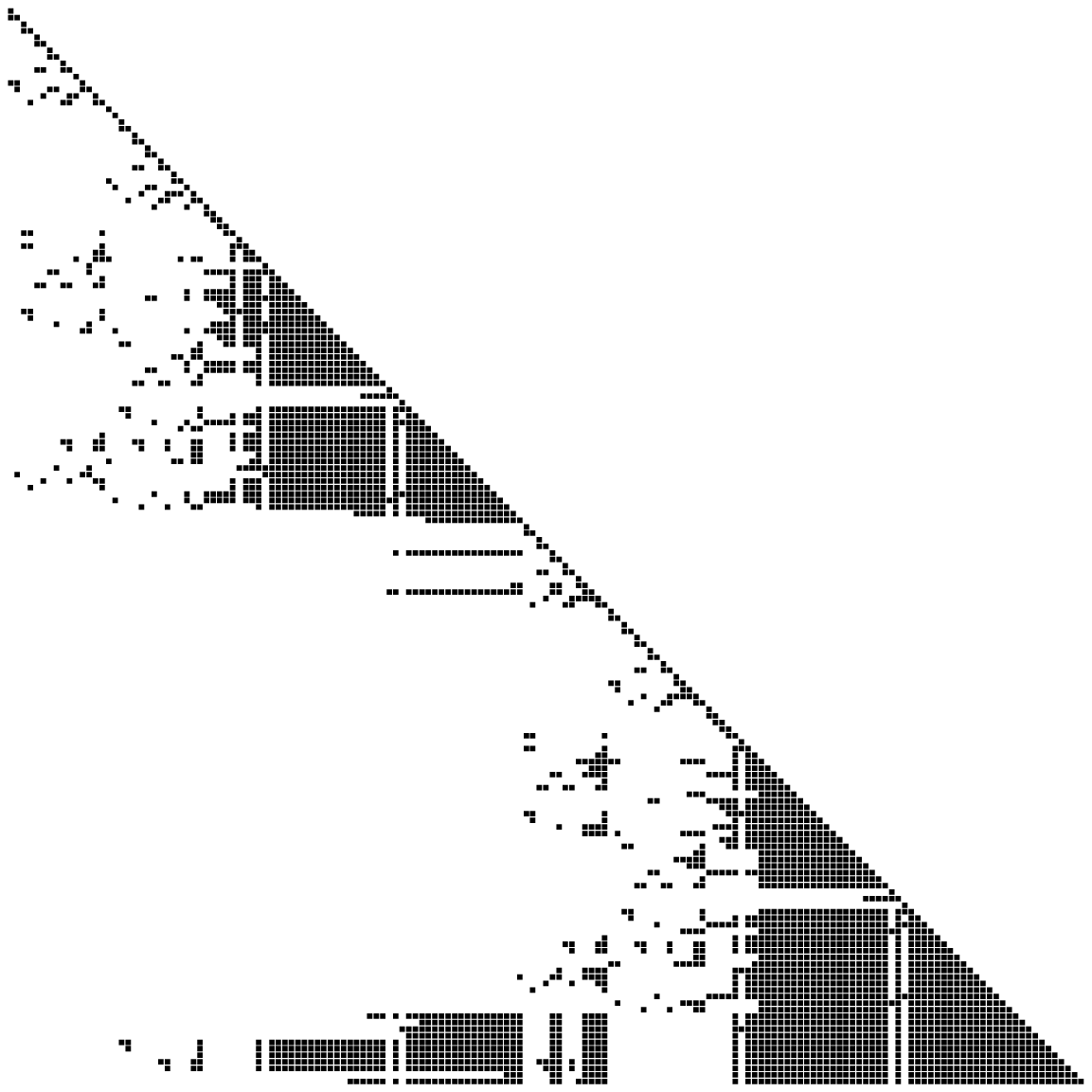}}}}\savebox{\mat}{\raisebox{-0.166\columnwidth}{\includegraphics[width=0.35\columnwidth]{matlab/plots/mesh2d_11_15_nesteddis.eps}}}

\savebox{\fact}{\raisebox{-0.166\columnwidth}{\includegraphics[width=0.35\columnwidth]{matlab/plots/mesh2d_11_15_ndfactor.eps}}}

\begin{centering}
$\tilde{A}=\left[\mata\right]$\hfill{}$\tilde{L}=\left[\faca\right]$
\par\end{centering}
\caption{\label{fig:11 by 15 nested dissection}A symmetric nested-dissection
permutation $\tilde{A}$ of the matrix shown in Figure~\ref{fig:11 by 15 with factor}
and the Cholesky factor $\tilde{L}$ of $\tilde{A}$.}
\end{figure}

For matrices that correspond to meshes, there is no permutation that
leads to a no-fill factorization, but there are permutations that
reduce fill significantly. Figure~\ref{fig:11 by 15 nested dissection}
shows a theoretically-effictive fill-reducing permutation of the matrix
of an $11$-by-$15$ mesh and its Cholesky factor. This symmetric
reordering of the rows and columns is called a \emph{nested dissection
ordering}. It reorders the rows and columns by finding a small set
of vertices in the mesh whose removal breaks the mesh into two separate
components of roughly the same size. This set is called a \emph{vertex
separator} and the corresponding rows and columns are ordered last.
The rows and columns corresponding to each connected component of
the mesh minus the separator are ordered using the same strategy recursively.
Figure~\ref{fig:3 by 7 nested dissection} illustrates this construction.

\begin{figure}
\def\mesh{\vcenter{\hbox{\includegraphics[width=0.35\columnwidth]{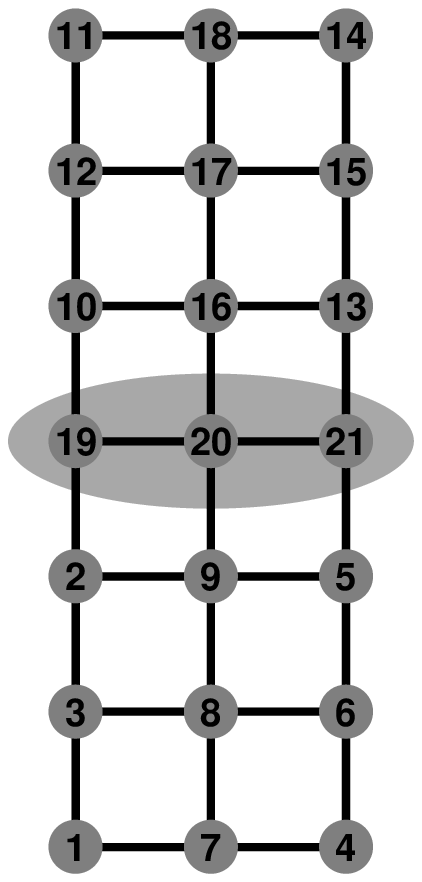}}}}

\def\mat{\vcenter{\hbox{\includegraphics[height=0.35\columnwidth]{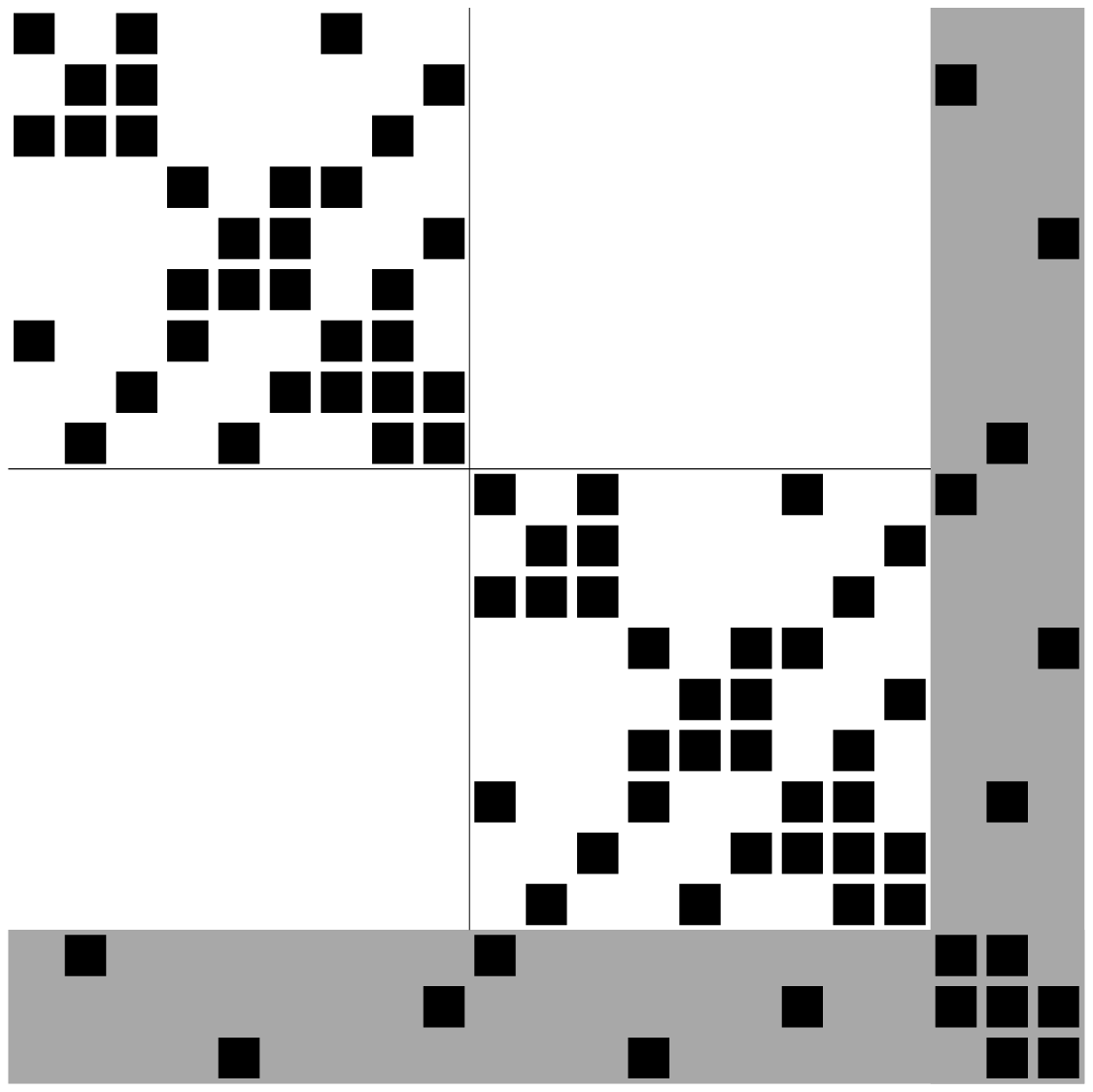}}}}

\begin{centering}
$\mesh$\hfill{}$\tilde{A}=\begin{bmatrix}\mat\end{bmatrix}$
\par\end{centering}
\caption{\label{fig:3 by 7 nested dissection}A nested-dissection ordering
of a mesh (left) and the nested-dissection reordered matrix of the
mesh (right). The shaded ellipse in the mesh shows the top-level vertex
separators. The shading in the matrix shows the rows and columns that
correspond to the separator; the thin lines separate the submatrices
that correspond to the connected components of the mesh minus the
separator..}
\end{figure}

The factorization of the nested-dissection-ordered matrix of a a $\sqrt{n}$-by-$\sqrt{n}$
mesh performs $\Theta(n^{3/2})$ arithmetic operations.\footnote{explain asymptotic notation.}
(We don't give here the detailed analysis of fill and work under nested-dissection
orderings.) For large meshes, this is a significant improvement over
the $\Theta(n^{2})$ operations that the factorization performs without
reordering. For a $\sqrt[3]{n}$ -by-$\sqrt[3]{n}$-by-$\sqrt[3]{n}$
mesh, the factorization performs $\Theta(n^{2})$ operations. This
is again an improvement, but the $\Theta(n^{2})$ solution cost is
too high for many applications. 

\begin{figure}
\begin{centering}
\includegraphics[width=0.45\columnwidth]{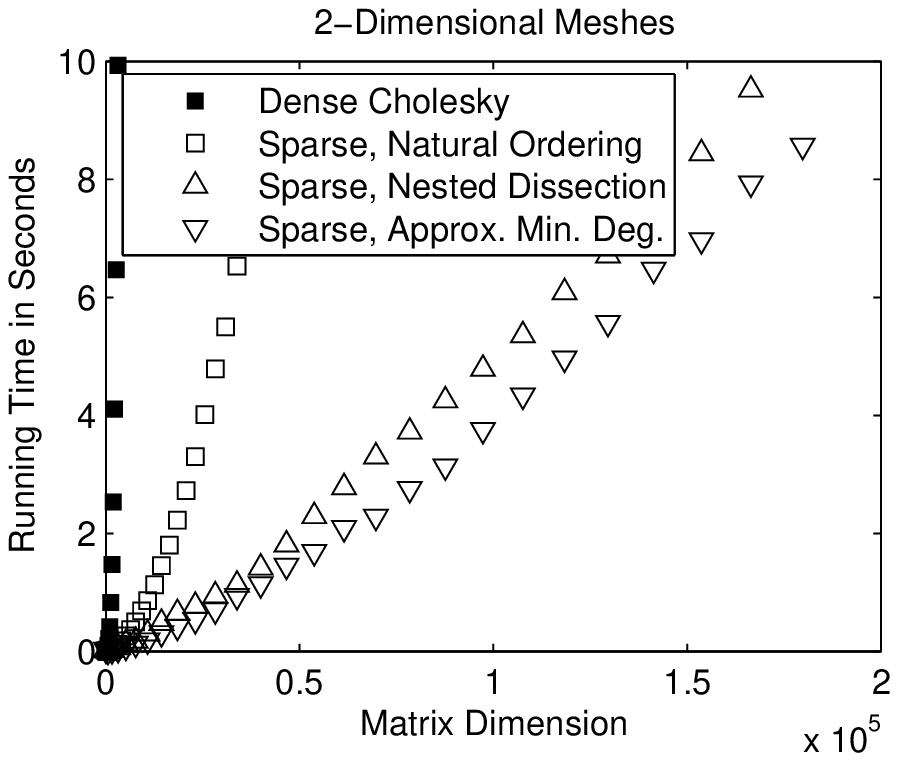}\hfill{}\includegraphics[width=0.45\columnwidth]{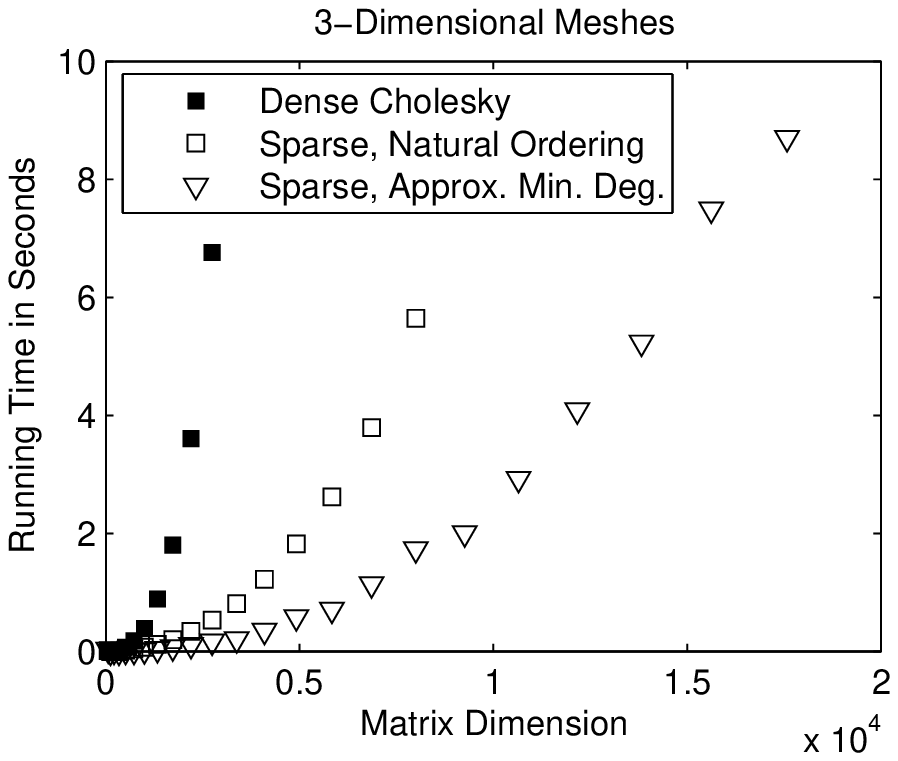}
\par\end{centering}
\caption{\label{fig: meshes sparse orderings}The effect of reordering the
symmetrically rows and columns on the running time of sparse Cholesky
factorizations.}
\end{figure}

Figure~\ref{fig: meshes sparse orderings} compares the running times
of sparse Choesky applied to the natural ordering of the meshes and
to fill-reducing orderings. The nested-dissection and minimum-degree
orderings speed up the factorization considerably.

Unfortunately, nested-dissection-type orderings are usually the best
we can do in terms of fill and work in sparse factorization of meshes,
both structured and unstructured ones. A class of ordering heuristics
called \emph{minimum-degree} heuristics can beat nested dissection
on small and medium-size meshes. On larger meshes, various hybrids
of nested-dissection and minimum-degree are often more effective than
pure variants of either method, but these hybrids have the same asymptotic
behavior as pure nested-dissection orderings.

To asymptotically beat sparse Cholesky factorizations, we something
more sophisticated than a factorization of the coefficient matrix.

\section{Krylov-Subspace Iterative Methods}

A radically different way to solve $Ax=b$ is to to find the vector
$x^{(k)}$ that minimizes $\|Ax^{(k)}-b\|$ in a subspace of $\mathbb{R}^{n}$,
the subspace that is spanned by $b,Ab,A^{2}b,\ldots,A^{k-1}b$. This
subspace is called the \emph{Krylov} subspace of $A$ and $b$, and
such solvers are called \emph{Krylov-subspace solvers}. This idea
is appealing because of two reasons, the first obvious and the other
less so. First, we can create a basis for this subspace by repeatedly
multiplying vectors by $A$. Since our matrices, as well as many of
the matrices that arise in practice, are so sparse, multiplying $A$
by a vector is cheap. Second, it turns out that when $A$ is symmetric,
finding the minimizer $x^{(k)}$ is also cheap. These considerations,
taken together, imply that the work performed by such solvers is proportional
to $k$. If $x^{(k)}$ converges quickly to $x$, the total solution
cost is low. But for most real-world problems, as well as for our
meshes, convergence is slow.

The \emph{conjugate gradients} method (\noun{cg}) and the \emph{minimal-residual}
method (\noun{minres}) are two Krylov-subspace methods that are appropriate
for symmetric positive-definite matrices. \noun{Minres} is more theoretically
appealing, because it minimizes the residual $Ax^{(k)}-b$ in the
$2$-norm. It works even if $A$ is indefinite. Its main drawback
is that it suffers from a certain numerically instability when implemented
in floating point. The instability is almost always mild, not catastrophic,
but it can slow convergence and it can prevent the method from converging
to very small residuals. Conjugate gradients is more reliable numerically,
but it minimizes the residual in the $A$-norm, not in the $2$-norm.

\begin{figure}
\begin{centering}
\includegraphics[width=0.45\columnwidth]{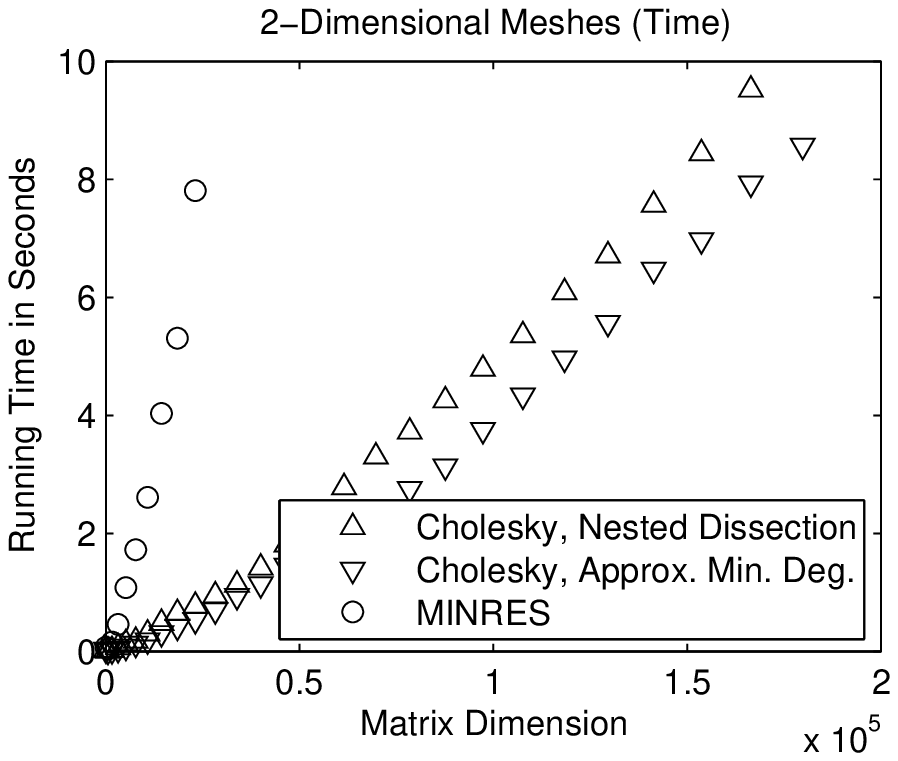}\hfill{}\includegraphics[width=0.45\columnwidth]{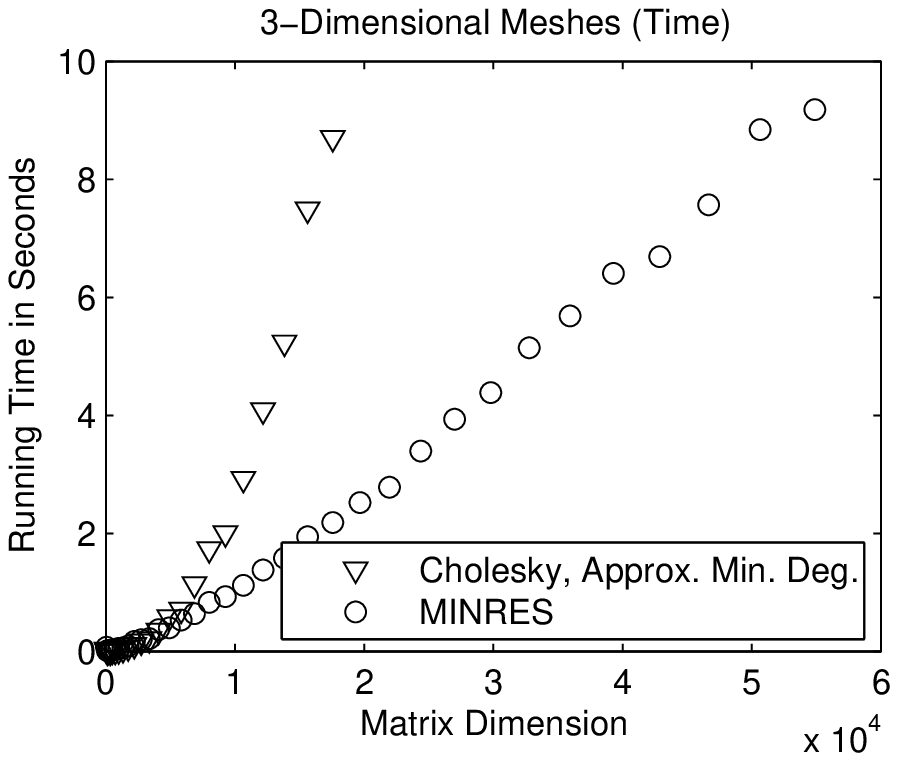}\\
\includegraphics[width=0.45\columnwidth]{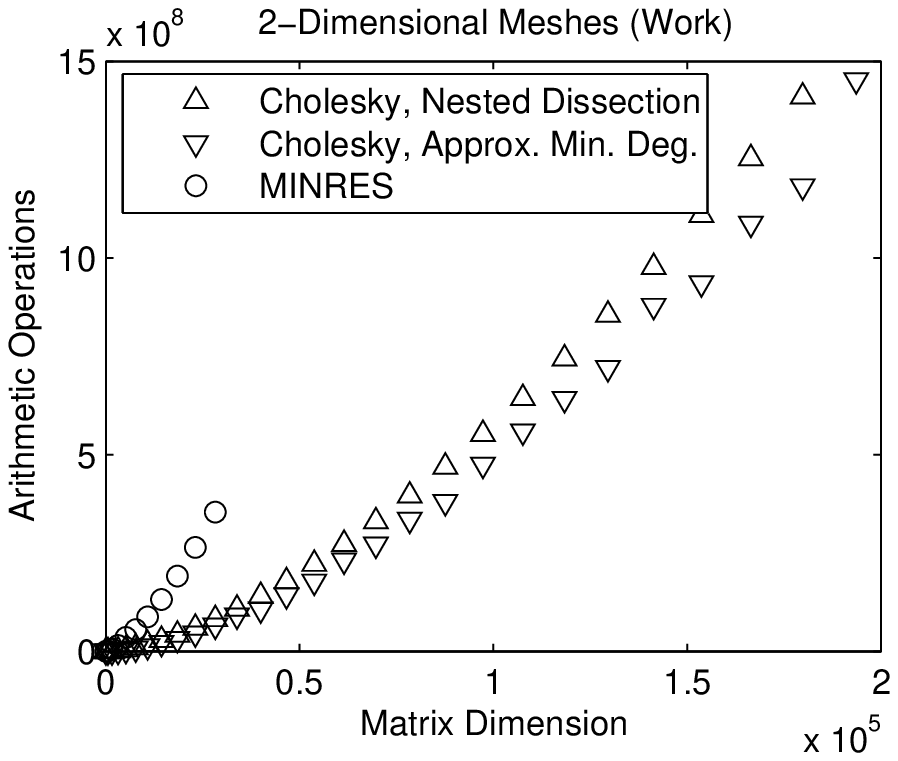}\hfill{}\includegraphics[width=0.45\columnwidth]{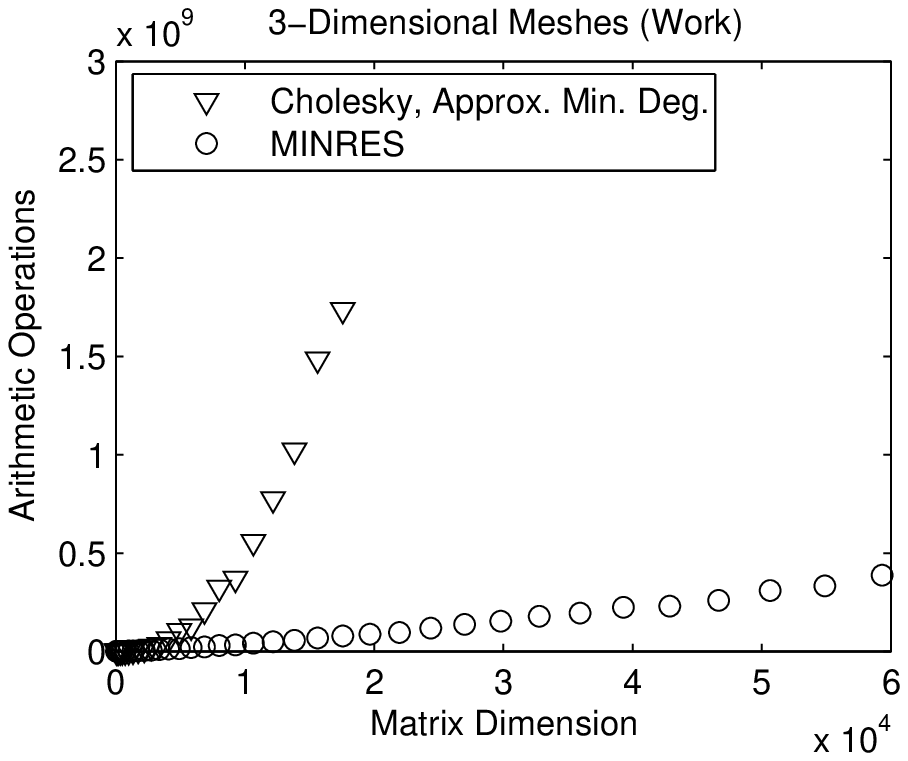}
\par\end{centering}
\caption{\label{fig: meshes direct vs minres}A comparison of a sparse direct
Cholesky solver with an iterative \noun{minres} solver.}
\end{figure}

Figure~\ref{fig: meshes direct vs minres} compares the performance
of a sparse direct solver (with a fill-reducing ordering) with the
performance of a \noun{minres} solver. The performance of an iterative
solver depends on the prescribed convergence threshold. Here we instructed
the solver to halt once the relative norm of the residual
\[
\frac{\|Ax^{(k)}-b\|}{\|b\|}
\]
 is $10^{-6}$ or smaller. On 2-dimensional meshes, the direct solver
is faster. On 3-dimensional meshes, the iterative solver is faster.
The fundamental reason for the difference is the size of the smallest
balanced vertex separators in 2- and 3-dimensional meshes. In a 2-dimensional
mesh with a bounded aspect ratio, the smallest balanced vertex separator
has $\Theta(n^{1/2})$ vertices; in a 3-dimensional mesh, the size
of the smallest separator is $\Theta(n^{2/3})$. The larger separators
in 3-dimensional meshes cause the direct solver to be expensive, but
they speed up the iterative solver. This is shown in Figure~\ref{fig: meshes minres iterations}:
\noun{minres} converges after fewer iterations on 2-dimensional meshes
than on 3-dimensional meshes. The full explanation of this behavior
is beyond the scope of this chapter, but this phenomenon shows how
different direct and iterative solvers are. Something that hurts a
direct solver may help an iterative solver. 

\begin{figure}
\begin{centering}
\includegraphics[width=0.45\columnwidth]{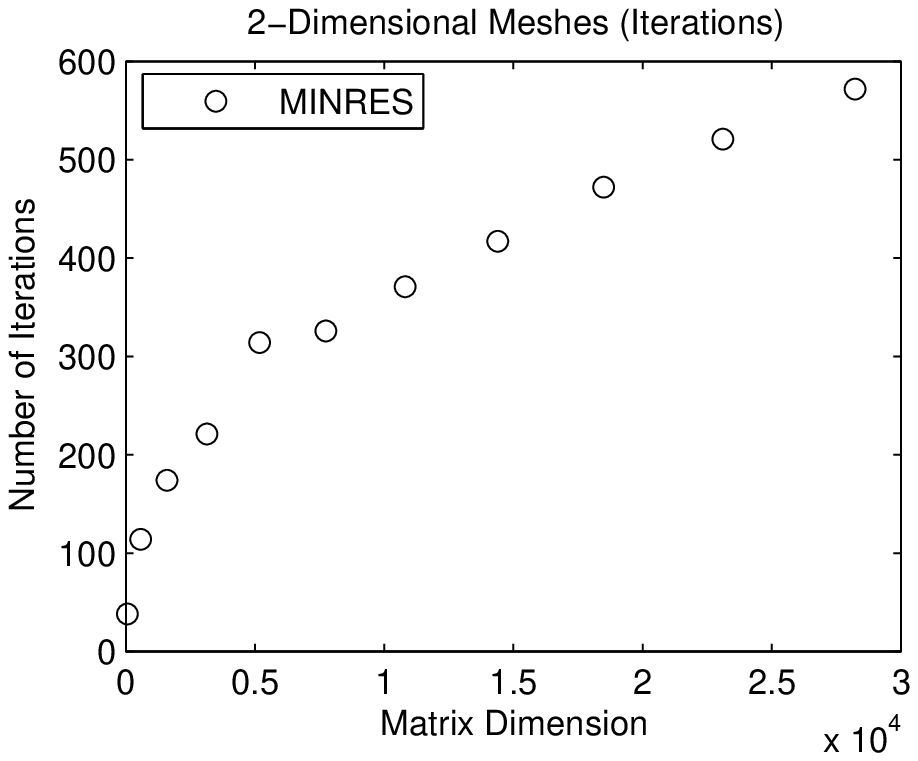}\hfill{}\includegraphics[width=0.45\columnwidth]{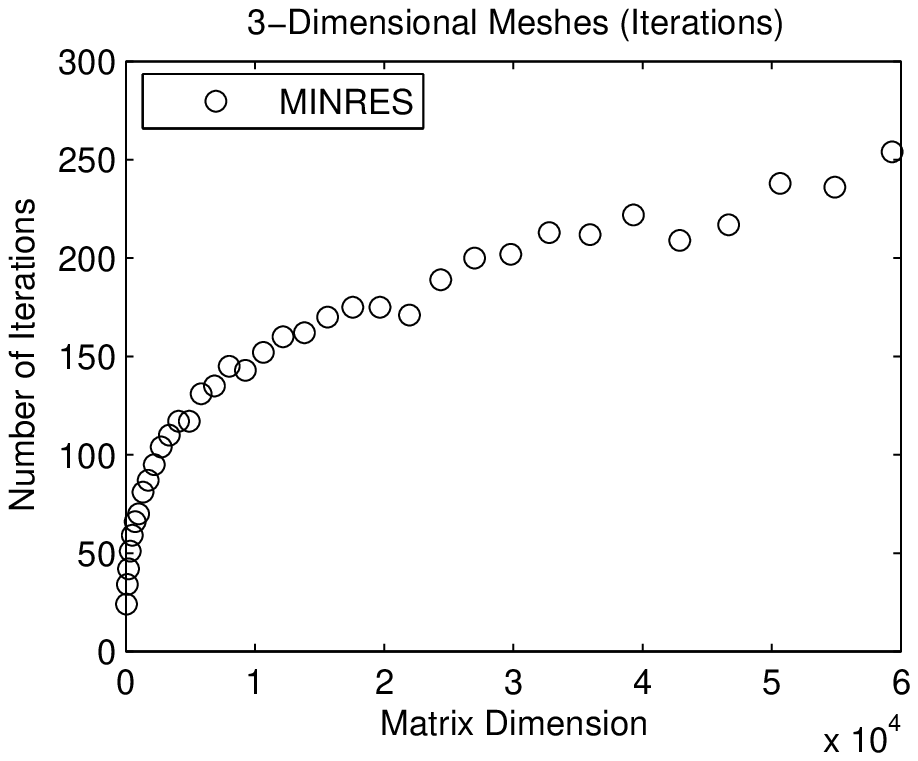}
\par\end{centering}
\caption{\label{fig: meshes minres iterations}The number of iterations required
to reduce the relative norm of the residual to $10^{6}$ or less.}
\end{figure}

As long as $A$ is symmetric and positive definite, the performance
of a sparse Cholesky solver does not depend on the values of the nonzero
elements of $A$, only on $A$'s nonzero pattern. The performance
of an iterative solver, on the other hand, depends on the values of
the nonzero elements. The behavior of \noun{minres} displayed in Figures~\ref{fig: meshes direct vs minres}
and~\ref{fig: meshes minres iterations} depend on our way of assigning
values to nonzero elements of $A$.

Comparing the running times of sparse and iterative solvers does not
reveal all the differences between them. The graphs that present the
number of arithmetic operations, in Figure~\ref{fig: meshes direct vs minres},
reveal another aspect. The differences in the amount of work that
the solvers perform are much greater than the differences in running
times. This indicates that the direct solver performs arithmetic at
a much higher rate than the iterative solver. The exact arithmetic-rate
ratio is implementation dependent, but qualitatively, direct solvers
usually run at higher computational rates than iterative solvers.
Most of the difference is due to the fact that it is easier to exploit
complex computer architectures, in particular cache memories, in direct
solvers.

Another important difference between direct and iterative solvers
is memory usage. A solver like \noun{minres} only allocates a few
$n$-vectors. A direct solver needs more memory, because, as we have
seen, the factors fill. The Cholesky factor of a 3-dimensional mesh
contains $\Omega(n^{4/3})$ nonzeros, so the memory-usage ratio between
a direct and iterative solver worsens as the mesh grows.

\section{Preconditioning}

How can we improve the speed of Krylov-subspace iterative solvers?
We cannot significantly reduce the cost of each iteration, because
for matrices as sparse as ours, the cost of each iteration is only
$\Theta(n)$. There is little hope for finding a better approximation
$x^{(k)}$ in the span of $b,Ab,\ldots,A^{k-1}b$, because \noun{minres}
already finds the approximation that minimizes the norm of the residual.

To understand what \emph{can} be improved, we examine simpler iterative
methods. Given an approximate solution $x^{(k)}$ to the system $Ax=b$,
we define the error $e^{(k)}=x-x^{(k)}$ and the residual $r^{(k)}=b-Ax^{(k)}$.
The error $e^{(k)}$ is the required \emph{correction} to $x^{(k)}$;
if we add $e^{(k)}$ to $x^{(k)}$, we obtain $x$. We obviously do
not know what $e^{(k)}$ is, but we know what $Ae^{(k)}$ is: it is
$r^{(k)}$, because
\begin{eqnarray*}
Ae^{(k)} & = & A\left(x-x^{(k)}\right)\\
 & = & Ax-Ax^{(k)}\\
 & = & b-Ax^{(k)}\\
 & = & r^{(k)}\;.
\end{eqnarray*}
Computing the correction requires solving a linear system of equation
whose coefficient matrix is $A$, which is what we are trying to do
in the first place. The situation seems cyclic, but it is not. The
key idea is to realize that even an inexact correction can bring us
closer to $x$. Suppose that we had another matrix $B$ that is close
to $A$ and such that linear systems of the form $Bz=r$ are much
earsier to solve than systems whose coefficient matrix is $A$. Then
we could solve $Bz^{(k)}=r^{(k)}$ for a correction $z^{(k)}$ and
set $x^{(k+1)}=x^{(k)}+z^{(k)}=x^{(k)}+B^{-1}(b-Ax^{(k)})$. Since
$Ae^{(k)}=b-Ax^{(k)}$, we can express $e^{(k)}$ directly in terms
of $e^{(0)}$, 
\begin{eqnarray*}
e^{(k+1)} & = & x-x^{(k+1)}\\
 & = & x-x^{(k)}+B^{-1}\left(b-Ax^{(k)}\right)\\
 & = & e^{(k)}+B^{-1}Ae^{(k)}\\
 & = & \left(I-B^{-1}A\right)e^{(k)}\\
 & = & \left(I-B^{-1}A\right)^{k}e^{(0)}\;.
\end{eqnarray*}
Therefore, if $\left(I-B^{-1}A\right)^{k}$ converges quickly to a
zero matrix, then $x^{(k)}$ quickly towards $x$. This is the precise
sense in which $B$ must be close to $A$: $\left(I-B^{-1}A\right)^{k}$
should converge quickly to zero. This happens if the spectral radius
of $\left(I-B^{-1}A\right)$ is small (the spectral radius of a matrix
is the maximal absolute value of its eigenvalues).

The convergence of more sophisticaded iterative methods, like \noun{minres}
and \noun{cg}, can also be accelerated using this technique, called
\emph{preconditioning}. In every iteration, we solve a correction
equation $Bz^{(k)}=r^{(k)}$. The coefficient matrix $B$ of the correction
equation is called a \emph{preconditioner}. The convergence of preconditioned
\noun{minres} and preconditioned \noun{cg} also depends on spectral
(eigenvalue) properties associated with $A$ and $B$. For these methods,
it is not the spectral radius that determines the convergence rate,
but other spectral properties. But the principle is the same: $B$
should be close to $A$ in some spectral metric.

In this book, we mainly focus on one family of methods to construct
and analyze preconditioners. There are many other ways. The constructions
that we present are called \emph{support preconditioners}, and our
analysis method is called \emph{support theory}.

We introduce support preconditioning here using one particular family
of support preconditioners for our meshes. These preconditioners,
which are only effective for regular meshes whose edges all have the
same weights (off-diagonals in $A$ all have the same value), are
called \emph{Joshi} preconditioners. We construct a Joshi preconditioner
$B$ for a mesh matrix $A$ by dropping certain edges from the mesh,
and then constructing a coefficient matrix $B$ for the sparsified
mesh using the procedure given in Section~\ref{sec:model meshes}.
Figures~\ref{fig: joshi 2d} and~\ref{fig: 3-by-4-by-3 joshi mesh}
present examples of such sparsified meshes. 

\begin{figure}
\begin{centering}
\includegraphics[width=0.45\columnwidth]{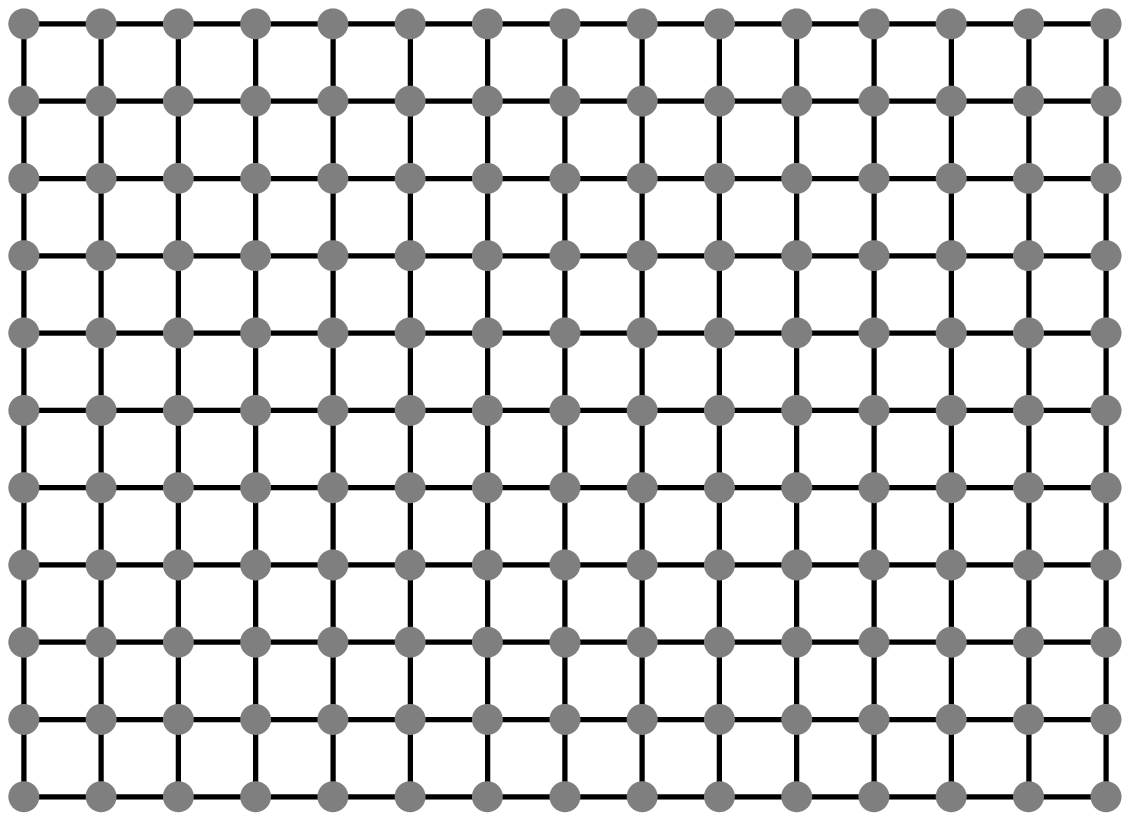}\hfill{}\includegraphics[width=0.45\columnwidth]{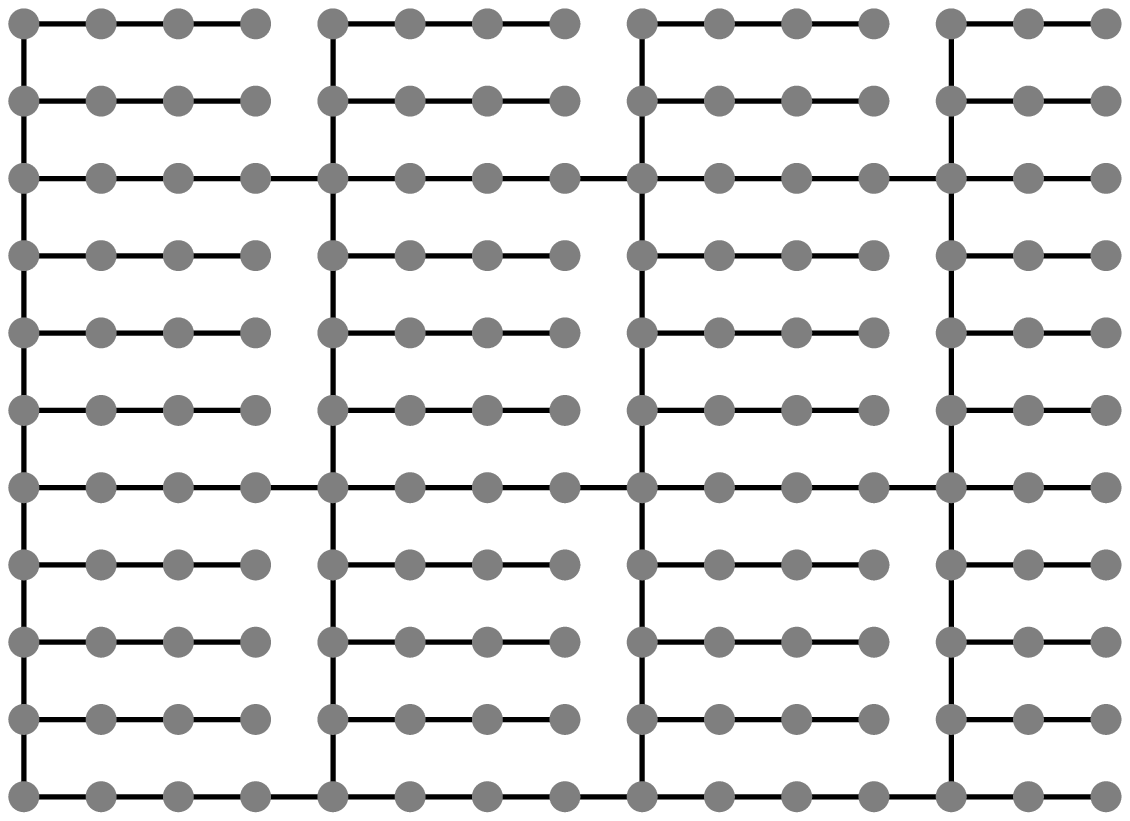}\\
~\\
\includegraphics[width=0.45\columnwidth]{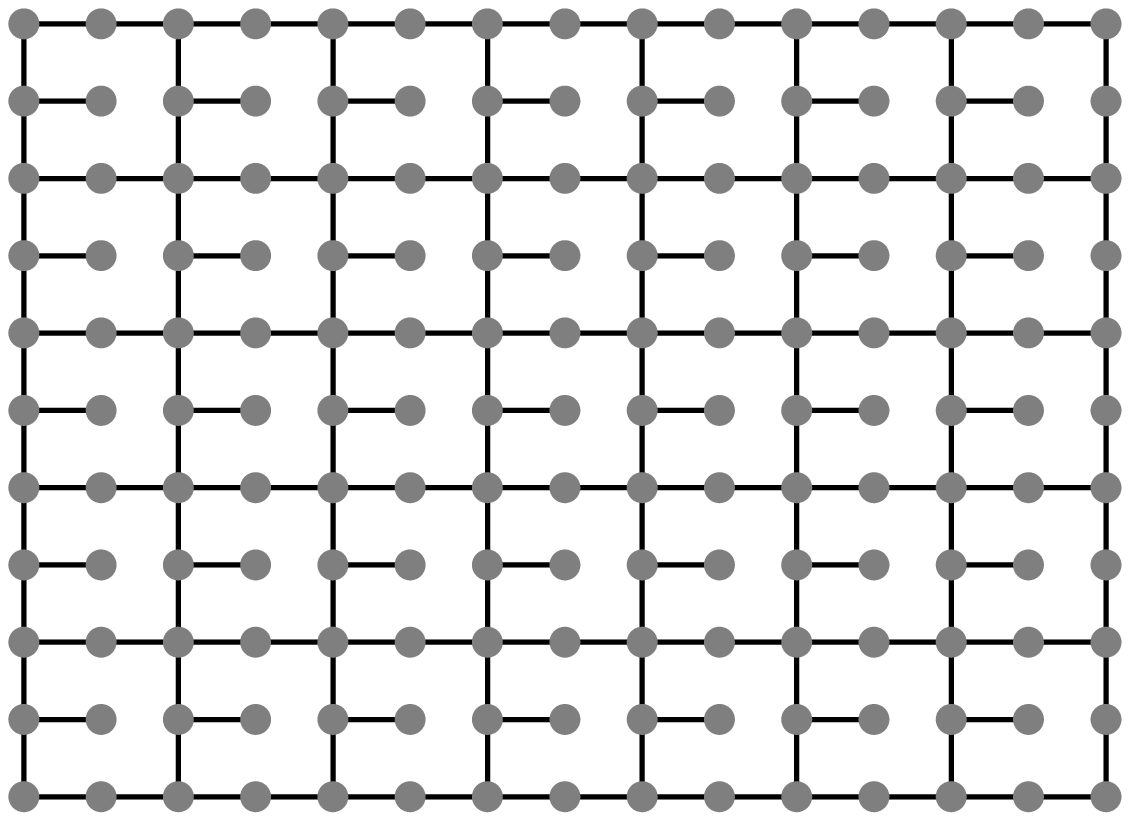}\hfill{}\includegraphics[width=0.45\columnwidth]{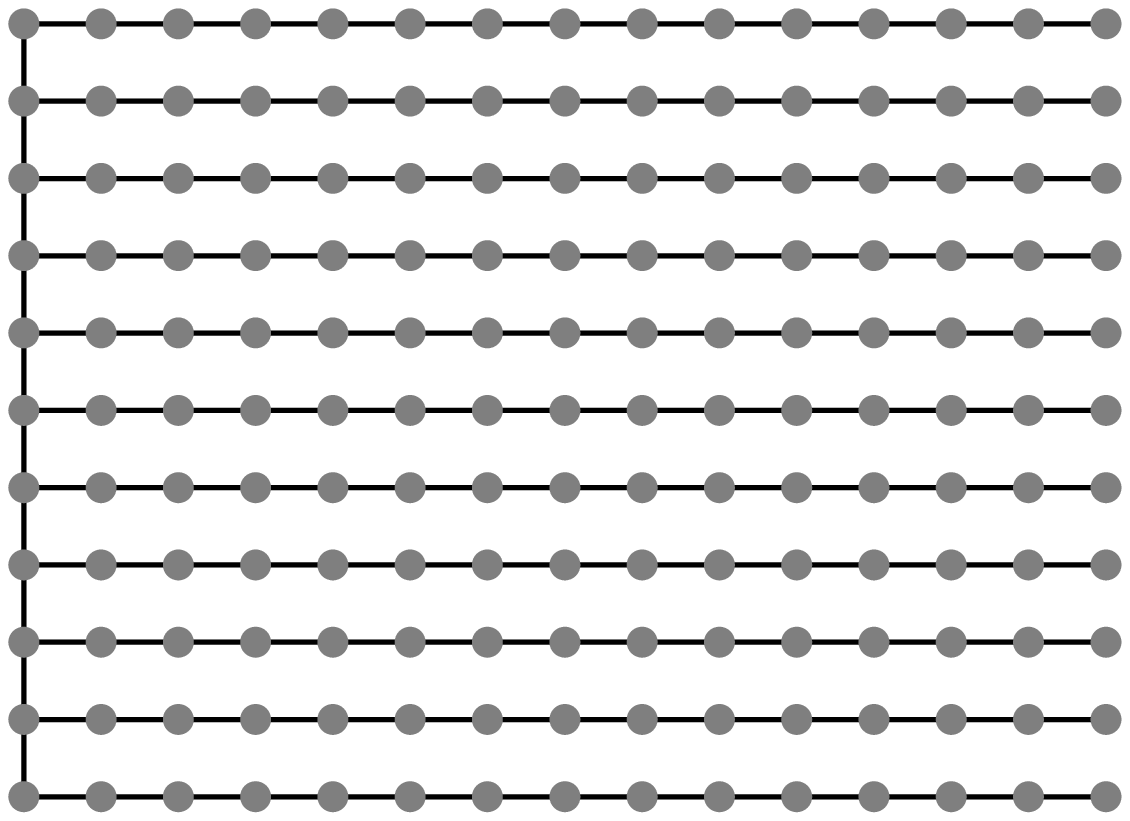}
\par\end{centering}
\caption{\label{fig: joshi 2d}A $15$-by-$11$ two-dimensional mesh (top left)
with Joshi subgraphs for $k=4$ (top right), $k=2$ (bottom left),
and $k=15$ (bottom right). The subgraph for $k=2$ is the densest
nontrivial subgraph and the subgraph for $k=15$ is the sparsest possible;
it is a spanning tree of the mesh.}
\end{figure}
\begin{figure}
\begin{centering}
\includegraphics[width=0.8\columnwidth]{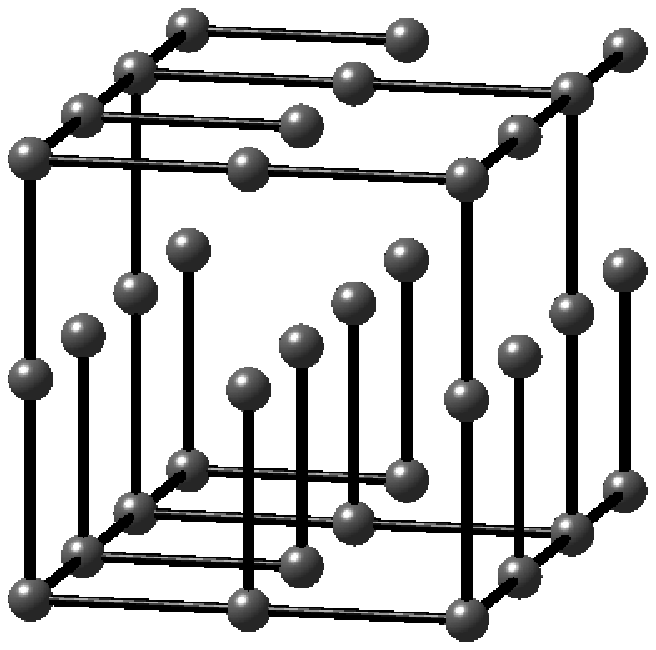}
\par\end{centering}
\caption{\label{fig: 3-by-4-by-3 joshi mesh}A $3$-by-$4$-by-$3$ three-dimensional
Joshi mesh.}
\end{figure}

To solve the correction equations during the iterations, we compute
the sparse Cholesky factorization of $PBP^{T}$, where $P$ is a fill-reducing
permutation $P$ for $B$.

The construction of Joshi preconditioners depends on a parameter $k$.
For small $k$, the mesh of the preconditioner is similar to the mesh
of $A$. In particular, for $k=1$ we obtain $B=A$. As we shall see,
Joshi preconditioners with a small $k$ lead to fast convergence rates.
But Joshi preconditioners with a small $k$ do not have small balanced
vertex separators, so their Cholesky factorization suffers from significant
fill. Not as much as in the Cholesky factor of $A$, but still significant.
This fill slows down the preconditioner-construction phase of the
linear solver, in which we construct and factor $B$, and it also
slows down each iteration. As $k$ gets larger, the similarity between
the meshes diminishes. Convergence rates are worse than for small
$k$, but the factorization of $B$ and the solution of the correction
equation in each iteration become cheaper. In particular, when $k$
is at least as large as the side of the mesh, the mesh of $B$ is
a tree, which has single-vertex balanced separators; factoring it
costs only $\Theta(n)$ operations, and its factor contains only $\Theta(n)$
nonzeros.

\begin{figure}
\begin{centering}
\includegraphics[width=0.45\columnwidth]{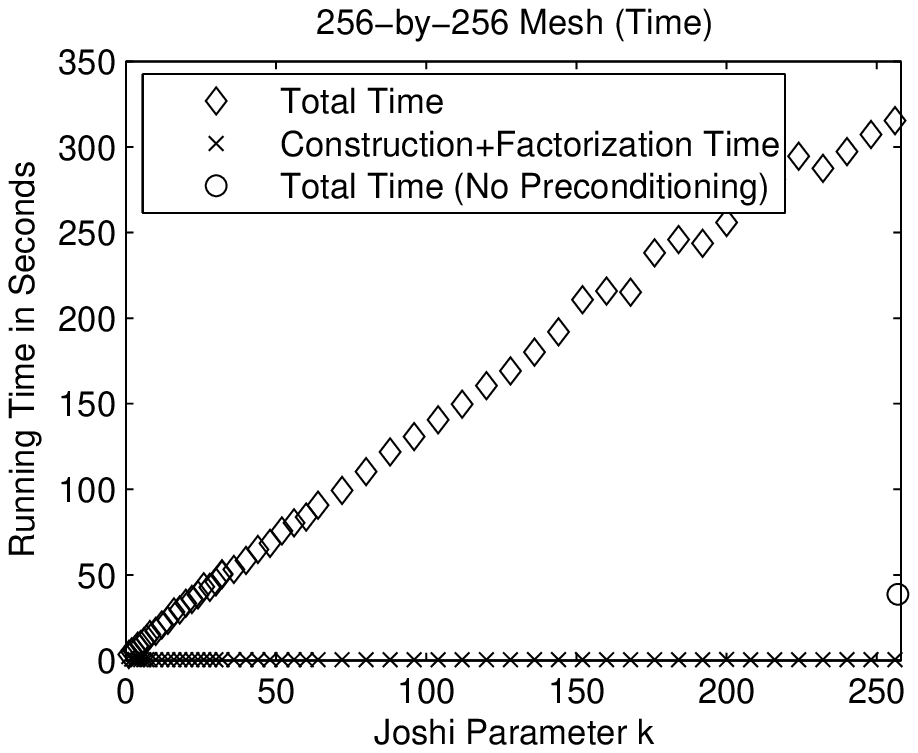}\hfill{}\includegraphics[width=0.45\columnwidth]{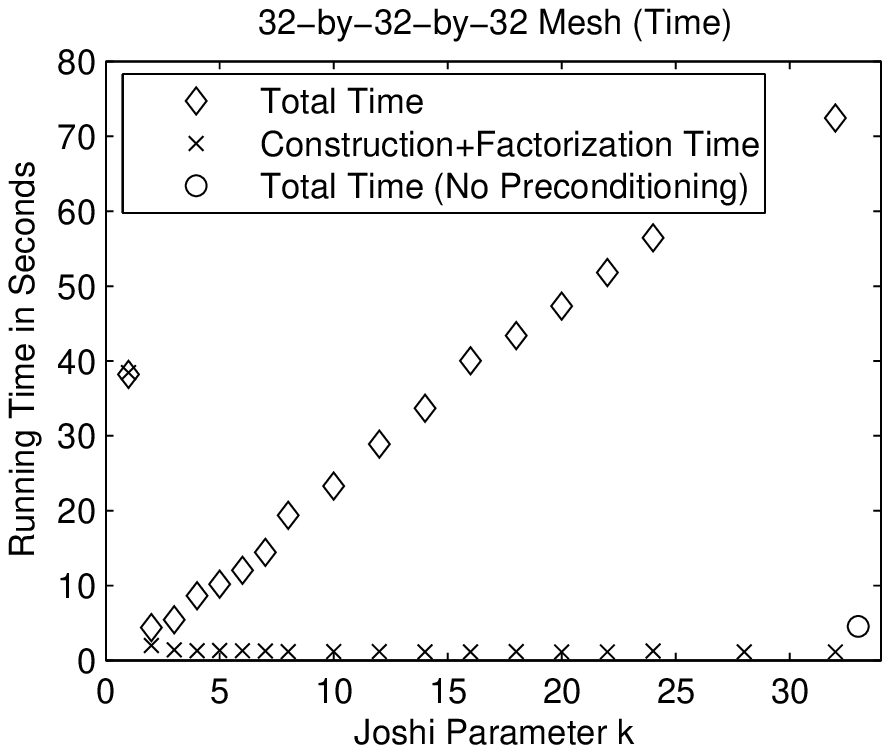}\\
\includegraphics[width=0.45\columnwidth]{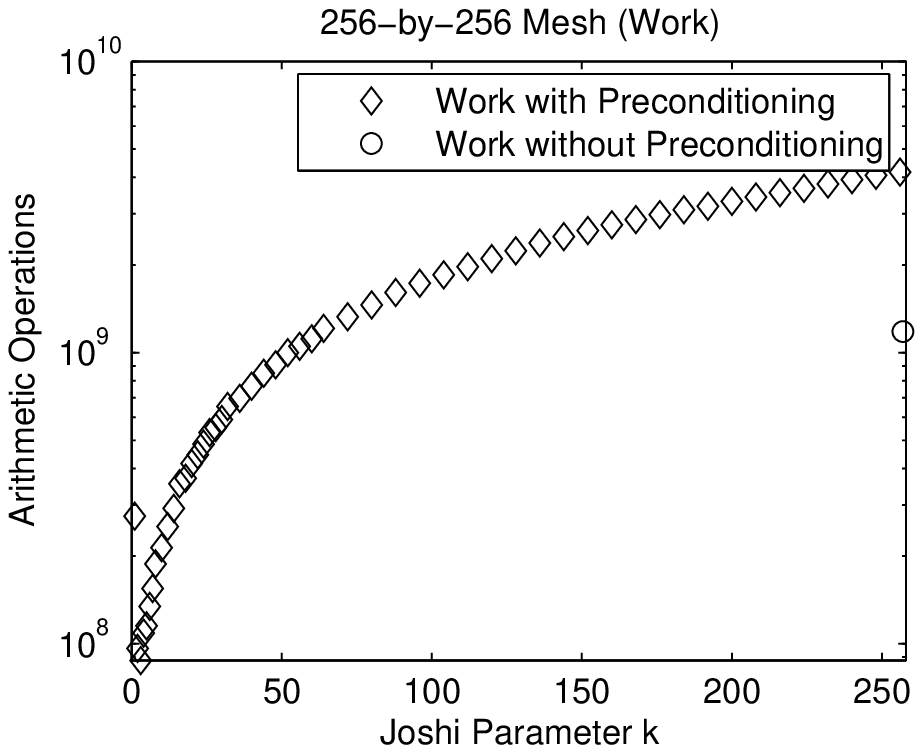}\hfill{}\includegraphics[width=0.45\columnwidth]{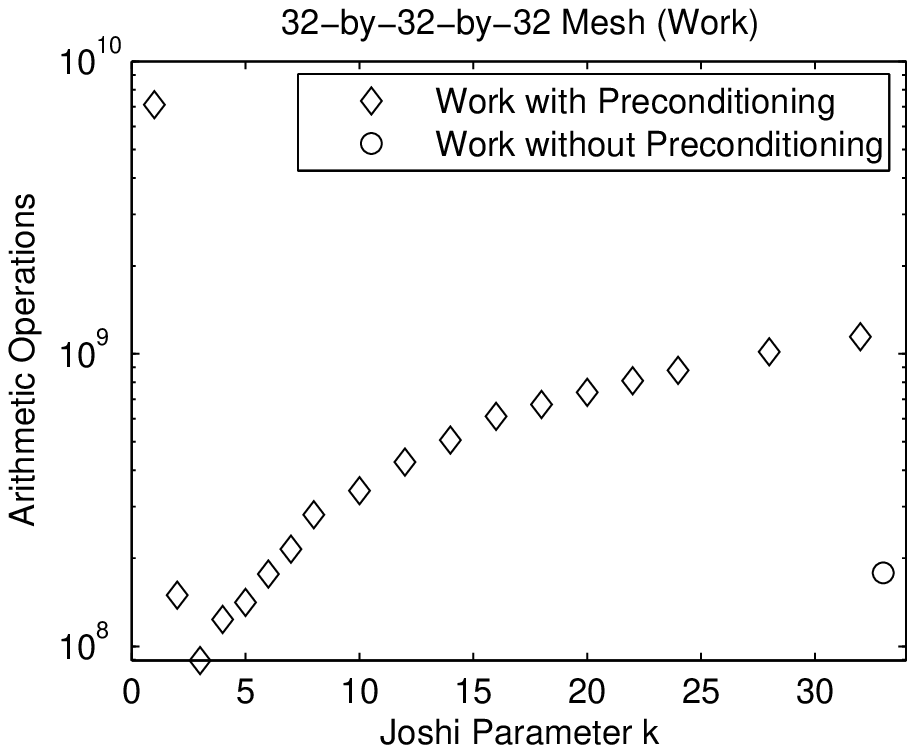}\\
\includegraphics[width=0.45\columnwidth]{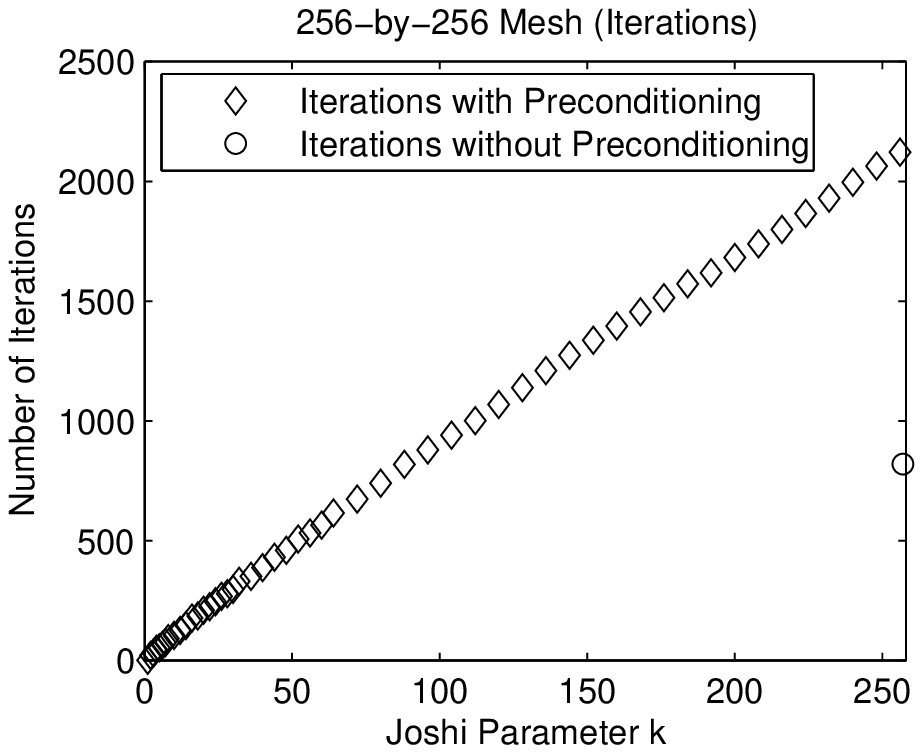}\hfill{}\includegraphics[width=0.45\columnwidth]{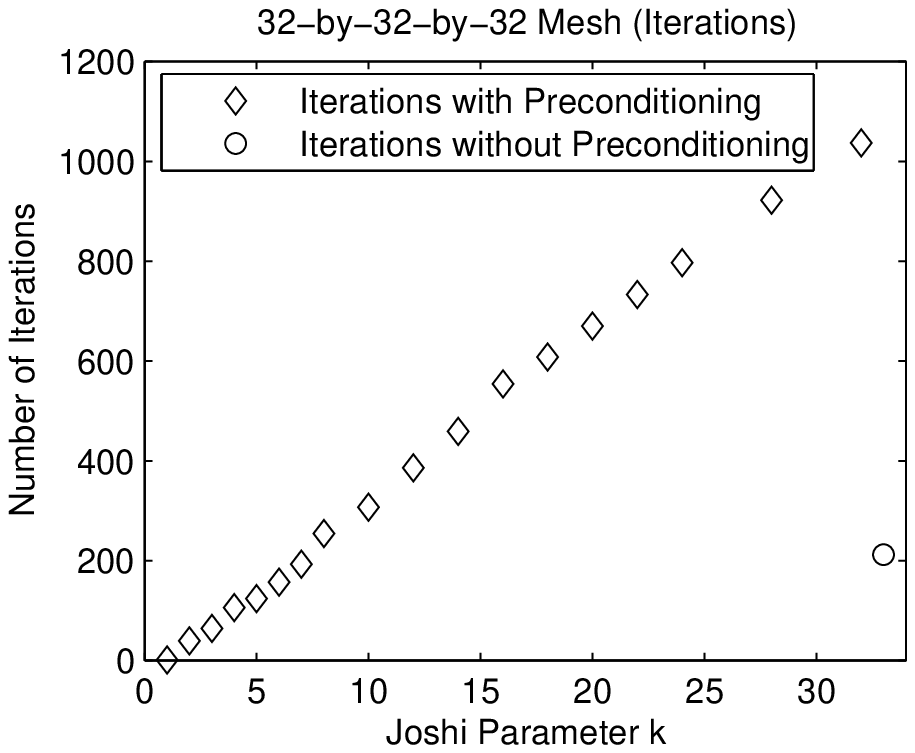}
\par\end{centering}
\caption{\label{fig: meshes joshi}The behavior of preconditioned \noun{minres}
solvers with Joshi preconditioners. For $k=1$, the performance is
essentially the performance of the direct solver. The preconditioners
were reordered using a minimum degree ordering and factored.}
\end{figure}

Figure~\ref{fig: meshes joshi} presents the performance of preconditioned
\noun{minres} with Joshi preconditioners on two meshes. On the three
dimensional mesh, the preconditioned iterative solver is much faster
than the direct solver and about as fast as the unpreconditioned \noun{minres}
solver. The preconditioned solver performs fewer arithmetic operations
than both the direct solver and the unpreconditioned solver. In terms
of arithmetic operations, the Joshi preconditioner for $k=3$ is the
most efficient. For small $k$, the preconditioned solvers perform
fewer iterations than the unpreconditioned solver. The number of iteration
rises with $k$. Perhaps the most interesting aspect of these graphs
is the fact that the cost of the factorization of $B$ drops dramatically
from $k=1$, where $B=A$, to $k=2$. Even slightly sparsifying the
mesh reduces the cost of the factorization dramatically. The behavior
on the 2-dimensional mesh is similar, except that in terms of time,
the cost of the solver rises monotonically with $k$. In particular,
the direct solver is fastest. But in terms of arithmetic operations,
the preconditioned solver with $k=2$ is the most efficient.

\section{The Main Questions}

We have now set the stage to pose the main questions in support theory
and preconditioning. The main theme in support preconditioning is
the construction of preconditioners using graph algorithms. Given
a matrix $A$, we model it as a graph $G_{A}$. From the $G_{A}$
we build another graph $G_{B}$, which approximates $G_{A}$ in some
graph-related metric. From $G_{B}$ we build the preconditioner $B$.
The preconditioner is usually factored as preparation for solving
correction equations $Bz=r$, but sometimes the correction equation
is also solved iteratively. This raises three questions
\begin{enumerate}
\item How do we model matrices as graphs?
\item Given two matrices $A$ and $B$ and their associated graphs $G_{A}$
and $G_{B}$, how do we use the graphs to estimate or bound the convergence
of iterative linear solvers in which $B$ preconditions $A$?
\item Given the graph $G_{A}$ of a matrix $A$, how do we construct a graph
$G_{B}$ in a metric that ensures fast convergence rates?
\end{enumerate}
Support theory addresses all three questions: the modeling question,
the analysis qustion, and the graph-approximation question.

The correspondence between graphs and matrices is usually an isomorphism,
to allow the analysis to yield useful convergence-rate bounds. It
may seem useless to transform one problem, the matrix-approximation
probem, into a completely equivalent one, a graph-approximation question.
It turns out, however, that many effective constructions only make
sense when the problem is described in terms of graphs. They would
not have been invented as matrix constructions. The Joshi preconditioners
are a good example. They are highly structured when viewed as graph
constructions, but would make little sense as direct sparse-matrix
constructions. The analysis, on the other hand, is easier, more natural,
and more general when carried out in terms of matrices. And so we
transform the orignal matrix to a graph to allow for sophisticaed
constructions, and then we transform the graphs back to matrices to
analyze the convergence rate.

The preconditioning framework that we presented is the main theme
of support theory, but there are other themes. The combinatorial and
algebraic tools that were invented in order to develop preconditioners
have also been used to analyze earlier preconditioners, to bound the
extreme eigenvalues of matrices, to analyze the null spaces of matrices,
and more.

 \setcounter{chapter}{1}

\chapter{\label{ch:iterative}Iterative Krylov-Subspace Solvers}

Preconditioned Krylov-subspace iterations are a key ingredient in
many modern linear solvers, including in solvers that employ support
preconditioners. This chapter presents Krylov-subspace iterations
for symmetric semidefinite matrices. In particular, we analyze the
convergence behavior of these solvers. Understanding what determines
the convergence rate is key to designing effective preconditioners.

\section{The Minimal Residual Method}

The minimal-residual method (\noun{minres}) is an iterative algorithm
that finds in each iteration the vector $x^{(t)}$ that minimizes
the residual $\|Ax^{(t)}-b\|_{2}$ in a subspace $\mathcal{K}_{t}$
of $\mathbb{R}^{n}$. These subspaces, called the Krylov subspaces,
are nested, $\mathcal{K}_{t}\subseteq\mathcal{K}_{t+1}$, and the
dimension of the subspace usually grows by one in every iteration,
so the accuracy of the approximate solution $x^{(t)}$ tends to improve
from one iteration to the next. The construction of the spaces $\mathcal{K}_{t}$
is designed to allow an efficient computation of the approximate solution
in every iteration.
\begin{defn}
The \emph{Krylov subspace} $\mathcal{K}_{t}$ is the subspace that
is spanned by the columns of the matrix $K_{t}=\begin{bmatrix}b & Ab & A^{2}b & \cdots & A^{t-1}\end{bmatrix}$.
That is, 
\[
\mathcal{K}_{t}=\left\{ K_{t}y:y\in\mathbb{R}^{t}\right\} \;.
\]
\end{defn}

Once we have a basis for $\mathcal{K}_{t}$, we can express the minimization
of $\|Ax-b\|_{2}$ in $\mathcal{K}_{t}$ as an unconstrained linear
least-squares problem, since
\[
\min_{x\in\mathcal{K}_{t}}\|Ax-b\|_{2}=\min_{y\in\mathbb{R}^{t}}\|AK_{t}y-b\|_{2}\;.
\]

There are three fundamental tools in the solution of least squares
problems with full-rank $m$-by-$n$ coefficient matrices with $m\geq n$.
The first tool is unitary transformations. A unitary matrix $Q$ preserves
the $2$-norm of vectors, $\|Qx\|_{2}=\|x\|_{2}$ for any $x$ (a
square matrix with orthonormal columns, or equivalently, a matrix
such that $QQ^{*}=I$). This allows us to transform least squares
problem into equivalent forms
\begin{equation}
\min_{x}\|Ax-b\|_{2}=\min_{y}\|QAx-Qb\|_{2}\;.\label{eq:Krylov least squares formulation}
\end{equation}
An insight about matrices that contain only zeros in rows $n+1$ through
$m$ is the second tool. Consider the least squares problem 
\[
\min_{x}\left\Vert \begin{bmatrix}R_{1}\\
0
\end{bmatrix}x-\begin{bmatrix}b_{1}\\
b_{2}
\end{bmatrix}\right\Vert _{2}\;.
\]
If the coefficient matrix has full rank, then the square block $R_{1}$
must be invertible. The solution to the problem is the vector $x$
that minimizes the Euclidean distance between $\left[\begin{smallmatrix}R_{1}\\
0
\end{smallmatrix}\right]x$ and $\left[\begin{smallmatrix}b_{1}\\
b_{2}
\end{smallmatrix}\right]$ is minimal. But it is clear what is this vector: $x=R_{1}^{-1}b_{1}$.
This vector yields $\left[\begin{smallmatrix}R_{1}\\
0
\end{smallmatrix}\right]x=\begin{bmatrix}b_{1}\\
0
\end{bmatrix}$, and we clearly cannot any closer. The third tool combines the first
two into an algorithm. If we factor $A$ into a product $QR$ of a
unitary matrix $Q$ and an upper trapezoidal matrix $R=\begin{bmatrix}R_{1}^{*} & 0\end{bmatrix}^{*}$,
which is always possible, then we can multiply the system by $Q^{*}$
to obtain
\begin{eqnarray*}
\min_{x}\left\Vert Ax-b\right\Vert _{2} & = & \min_{x}\left\Vert QRx-b\right\Vert _{2}\\
 & = & \min_{x}\left\Vert Rx-Q^{*}b\right\Vert _{2}\\
 & = & \min_{x}\left\Vert \begin{bmatrix}R_{1}\\
0
\end{bmatrix}x-Q*b\right\Vert _{2}\;.
\end{eqnarray*}
We can now compute the minimizer $x$ by substitution. Furthermore,
the norm of the residual is given by $\|(Q^{*}b)_{n+1\colon m}\|_{2}$.

In principle, we could solve (\ref{eq:Krylov least squares formulation})
for $y$ using a $QR$ factorization of $AK_{t}$. Once we find $y$,
we can compute $x=K_{t}y$. There are, however, two serious defects
in this approach. First, it is inefficient, because $AK_{t}$ is dense.
Second, as $t$ grows, $A^{t}b$ tends to the subspace spanned by
the dominant eigenvectors of $A$ (the eigenvectors associated with
the eigenvalues with maximal absolute value). This causes $K_{t}$
to become ill conditioned; for some vectors $x\in\mathcal{K}_{t}$
with $\|x\|_{2}=1$, the vector $y$ such that $x=K_{t}y$ has huge
elements. This phenomenon will happen even if we normalize the columns
of $K_{t}$ to have unit norm, and it causes instabilities when the
computation is carried out using floating-point arithmetic. 

We need a better basis for $\mathcal{K}_{t}$ for stability, and we
need to exploit the special properties of $AK_{t}$ for efficiency.
\noun{Minres} does both. It uses a stable basis that can be computed
efficiently, and it it combines the three basic tools in a clever
way to achieve efficiency.

An orthonormal basis $Q_{t}$ for $\mathcal{K}_{t}$ would work better,
because for $x=Q_{t}z$ we would have $\|z\|_{2}=\|x\|_{2}$. There
are many orthonormal bases for $\mathcal{K}_{t}$. We choose a particular
basis that we can compute incrementally, one basis column in each
iteration. The basis that we use consists of the columns of the $Q$
factor from the $QR$ factorization of $K_{t}=Q_{t}R_{t}$, where
$Q_{t}$ is orthonormal and $R_{t}$ is upper triangular. We compute
$Q_{t}$ using Gram-Schmidt orthogonalization. The Gram-Schmidt process
is not always numerically stable when carried out in floating-point
arithmetic, but it is the only efficient way to compute $Q_{t}$ one
column at a time.

We now show how to compute, given $Q_{t}=\begin{bmatrix}q_{1} & \cdots & q_{t}\end{bmatrix}$,
the next basis column $q_{t+1}$. The key to the efficient computation
of $q_{t+1}$ is the special relationship of the matrices $Q_{t}$
with $A$. We assume by induction that $K_{t}=Q_{t}R_{t}$ and that
$r_{t,t}\neq0$. If $r_{t,t}=0$, then it is not hard to show that
the exact solution $x$ is in $\mathcal{K}_{t}$, so we would have
found it in one of the previous iterations. If we reached iteration
$t$, then $x\not\in\mathcal{K}_{t}$, so $r_{t,t}\neq0$. Expanding
the last column of $K_{t}=Q_{t}R_{t}$, we get
\[
A^{t-1}b=r_{1,t}q_{1}+r_{2,t}q_{2}+\cdots+r_{t-1,t}q_{t-1}+r_{t,t}q_{t}\;.
\]
We now isolate $q_{t}$ to obtain
\[
q_{t}=r_{t,t}^{-1}\left(A^{t-1}b-r_{1,t}q_{1}-r_{2,t}q_{2}-\cdots-r_{t-1,t}q_{t-1}\right)\;,
\]
so
\[
Aq_{t}=r_{t,t}^{-1}\left(A^{t}b-r_{1,t}Aq_{1}-r_{2,t}Aq_{2}-\cdots-r_{t-1,t}Aq_{t-1}\right)\;.
\]
Because $q_{t}\in\mathcal{K}_{t}$, clearly $Aq_{t}\in\mathcal{K}_{t+1}$.
Furthermore, if we reached iteration $t+1$, then $Aq_{t}\not\in\mathcal{K}_{t}$,
because if $Aq_{t}\in\mathcal{K}_{t}$ then $x\in\mathcal{K}_{t}$.
Therefore, we can orthogonalize $Aq_{t}$ with respect to $q_{1},\dots,q_{t}$
and normalize to obtain $q_{t+1}$ 
\begin{eqnarray*}
\tilde{q}_{t+1} & = & Aq_{t}-\left(q_{t}^{*}Aq_{t}\right)q_{t}-\cdots-\left(q_{1}^{*}Aq_{t}\right)q_{1}\\
q_{t+1} & = & \tilde{q}_{t+1}/\left\Vert \tilde{q}_{t+1}\right\Vert _{2}\;.
\end{eqnarray*}

These expressions allows us not only to compute $q_{t}$, but also
to express $Aq_{t}$ as a linear combination of $q_{1},\ldots q_{t},q_{t+1}$,
\begin{eqnarray*}
Aq_{t} & = & \left\Vert \tilde{q}_{t+1}\right\Vert _{2}q_{t+1}+\left(q_{t}^{*}Aq_{t}\right)q_{t}+\cdots+\left(q_{1}^{*}Aq_{t}\right)q_{1}\\
 & = & h_{t+1,t}q_{t+1}+h_{t,t}q_{t}+\cdots+h_{1,t}q_{1}\;,
\end{eqnarray*}
where $h_{t+1,t}=\|\tilde{q}_{t+1}\|_{2}$, and where $h_{j,t}=q_{j}^{*}Aq_{t}$
for $j\leq t$. The same argument also holds for $Aq_{1},\ldots,Aq_{t-1}$,
so in matrix terms,
\[
AQ_{t}=Q_{t+1}\tilde{H_{t}}\;.
\]
The matrix $\tilde{H}_{t}\in\mathbb{R}^{(t+1)\times t}$ is upper
Hessenberg: in column $j$, rows $j+2$ to $t+1$ are zero. If we
multiply both sides of this equation from the left by $Q_{t}^{*}$,
we obtain
\begin{eqnarray*}
Q_{t}^{*}AQ_{t} & = & Q_{t}^{*}Q_{t+1}\tilde{H_{t}}\\
 & = & \left(\begin{array}{ccc}
1\\
 & \ddots\\
 &  & 1\\
0 & \cdots & 0
\end{array}\right)\tilde{H_{t}}\\
 & = & \left(\tilde{H_{t}}\right)_{1\colon t,1\colon t}\;.
\end{eqnarray*}
We denote the first $t$ rows of $\tilde{H_{t}}$ by $H_{t}$ to obtain
\[
Q_{t}^{*}AQ_{t}=H_{t}\;.
\]
Because $A$ is symmetric, $H_{t}$ must be symmetric, and hence tridiagonal.
The symmetry of $H_{t}$ implies that in column $j$ of $H_{t}$ and
$\tilde{H}_{t}$, rows $1$ through $j-2$ are also zero. This is
the key to the efficiency of \noun{minres} and similar algorithms,
because it implies that for $j\leq t-2$, $h_{j,t}=q_{j}^{*}Aq_{t}=0$;
we do not need to compute these inner products and we do not need
to subtract $\left(q_{j}^{*}Aq_{t}\right)q_{j}$ from $Aq_{t}$ in
the orthogonalization process. As we shall shortly see, we do not
even need to store $q_{j}$ for $j\leq t-2$.

Now that we have an easy-to-compute orthonormal basis for $\mathcal{K}_{t}$,
we return to our least-squares problem
\begin{eqnarray*}
\min_{x\in\mathcal{K}_{t}}\|Ax-b\|_{2} & = & \min_{y\in\mathbb{R}^{t}}\|AK_{t}y-b\|_{2}\\
 & = & \min_{z\in\mathbb{R}^{t}}\|AQ_{t}z-b\|_{2}\\
 & = & \min_{z\in\mathbb{R}^{t}}\|Q_{t+1}\tilde{H_{t}}z-b\|_{2}\;.
\end{eqnarray*}
Our strategy now is to compute the minimizer $z$ from the expression
in the last line. Once we compute $z$, the minimizer $x$ in $\mathcal{K}_{t}$
is $x=Q_{t}z$. To compute the minimizer, we use the equality
\begin{eqnarray*}
\arg\min_{z\in\mathbb{R}^{t}}\|Q_{t+1}\tilde{H}_{t}z-b\|_{2} & = & \arg\min_{z\in\mathbb{R}^{t}}\|\tilde{H}_{t}z-Q_{t+1}^{*}b\|_{2}\\
 & = & \arg\min_{z\in\mathbb{R}^{t}}\|\tilde{H}_{t}z-\|b\|_{2}e_{1}\|_{2}\;,
\end{eqnarray*}
where $e_{1}$ is the first unit vector. The equality $Q_{t+1}^{*}b=\|b\|_{2}e_{1}$
holds because $Q_{t+1}$ is the orthogonal factor in the $QR$ factorization
of $K_{t+1}$, whose first column is $b$. To solve this least-squares
problem, we will use a $QR$ factorization $\tilde{H}_{t}=V_{t}U_{t}$
of $\tilde{H}_{t}$: the minimizer $z$ is then defined by $U_{t}z=\|b\|_{2}V_{t}^{*}e_{1}$.
Because $\tilde{H}_{t}\in\mathbb{R}^{t\times t}$ is triadiagonal
we can compute its $QR$ factorization with a sequence of of $t-1$
Givens rotations, where the $i$th rotation transforms rows $i$ and
$i-1$ of $\tilde{H}_{t}$ and of $\|b\|_{2}e_{1}$. A Given rotation
is a unitary matrix of the form 
\[
\begin{bmatrix}1\\
 & \ddots\\
 &  & 1\\
 &  &  & \cos\theta & \sin\theta\\
 &  &  & -\sin\theta & \cos\theta\\
 &  &  &  &  & 1\\
 &  &  &  &  &  & \ddots\\
 &  &  &  &  &  &  & 1
\end{bmatrix}\;.
\]
We choose $\theta$ so as to anihilate the subdiagonal element in
column $i$ of $\tilde{H}_{t}$.

One difficulty that arises is that $z=z_{t}$ changes completely in
every iteration, so to form $x=Q_{t}z$, we need to either store all
the columns of $Q_{t}$ to produce the approximate solution $x$,
or to recompute $Q_{t}$ again once we obtain a $z$ that ensures
a small-enough residual. Fortunately, there is a cheaper alternative
that only requires storing a constant number of vectors. Let $M_{t}=Q_{t}U_{t}^{-1}$,
and let $w=U_{t}z$. Instead of solving for $z$, we only compute
$w=\|b\|_{2}V_{t}^{*}e_{1}$. Because the $i$the rotation only transforms
rows $i-1$ and $i$ of $\|b\|_{2}e_{1}$, we can compute $w$ one
entry at a time. Since $x=Q_{t}z=Q_{t}U_{t}^{-1}U_{t}z=M_{t}w$, we
can accumulate $x$ using the columns of $M_{t}$. We compute the
columns of $M_{t}$ one at a time using the triangular linear system
$U_{t}M_{t}=Q_{t}$; in iteration $t$, we compute the last column
of $M_{t}$ from $U_{t}$ and the last column of $Q_{t}$.

\begin{figure}
\noindent \begin{raggedright}
\noun{minres}($A,b$)\\
~~$q_{1}=b/\|b\|_{2}$\hfill{}$\triangleright$ the first column
of $Q_{t}$\\
~~$w_{1}=\|b\|_{2}$\hfill{}$\triangleright$ the first element
of the vector $w$\\
~~for $t=1,2,\ldots$ until convergence\\
~~~~compute $Aq_{t}$\\
~~~~$\tilde{q}_{t+1}=Aq_{t}-\left(q_{t}^{*}Aq_{t}\right)q_{t}-\left(q_{t-1}^{*}Aq_{t}\right)q_{t-1}$\\
~~~~$H_{t+1,t}=\|\tilde{q}_{t+1}\|_{2}$\\
~~~~$H_{t,t}=q_{t}^{*}Aq_{t}$\\
~~~~$H_{t-1,t}=q_{t-1}^{*}Aq_{t}$\\
~~~~$q_{t+1}=\tilde{q}_{t+1}/\left\Vert \tilde{q}_{t+1}\right\Vert _{2}$\\
~~~~$\triangleright$ Apply rotation $t-2$ to $H_{\colon,t}$\\
~~~~$R_{t-2,t}=s_{t-2}H_{t-1,t}$ \\
~~~~if $t>2$ then $U_{t-1,t}=c_{t-2}H_{t-1,t}$ else $U_{t-1,t}=H_{t-1,t}$\\
~~~~$\triangleright$ Apply rotation $t-1$ to $H_{\colon,t}$\\
~~~~$U_{t-1,t}=c_{t-1}U_{t-1,t}+s_{t-1}H_{t,t}$ \\
~~~~if $t>1$ then $U_{t,t}=-s_{t-1}U_{t-1,t}+c_{t-1}H_{t,t}$
else $U_{t,t}=H_{t,t}$\\
~~~~compute $\left[\begin{smallmatrix}c_{t} & -s_{t}\\
s_{t} & c_{t}
\end{smallmatrix}\right]$, the Givens rotation such\\
~~~~\hfill{}that $\left[\begin{smallmatrix}c_{t} & -s_{t}\\
s_{t} & c_{t}
\end{smallmatrix}\right]\left[\begin{smallmatrix}U_{t,t}\\
H_{t+1,t}
\end{smallmatrix}\right]=\left[\begin{smallmatrix}\textrm{anything}\\
0
\end{smallmatrix}\right]$\\
~~~~$\triangleright$ Apply rotation $t$ to $H_{\colon,t}$\\
~~~~$U_{t,t}=c_{t}U_{t,t}+s_{t}H_{t+1,t}$\\
~~~~$\triangleright$ Apply rotation $t$ to form $w=U_{t}z=V_{t}^{*}\|b\|e_{1}$\\
~~~~$w_{t+1}=-s_{t}w_{t}$\\
~~~~$w_{t}=c_{t}w_{t}$\\
~~~~$m_{t}=r_{t,t}^{-1}\left(q_{t}-U_{t-1,t}m_{t-1}-U_{t-2,t}m_{t-2}\right)$\hfill{}$\triangleright$
next column of $M$\\
~~~~$x^{(t)}=x^{(t-1)}+w_{t}m_{t}$\\
~~end for
\par\end{raggedright}
\begin{lyxcode}
\end{lyxcode}
\caption{\noun{Minres}. To keep the pseudo-code simple, we use the convention
that vectors and matrix/vector elements with nonpositive indices are
zeros. By this convension, $x^{(0)}=q_{0}=m_{0}=m_{-1}=0$, and so
on. In an actual code, this convension can be implemented either using
explicit zero vectors or using conditionals.}
\end{figure}

\section{The Conjugate Gradients Algorithm}

\noun{Minres} is a variant of an older and more well-known algorithm
called the \emph{Conjugate Gradients} method (\noun{cg}). The Conjugate
Gradients method is only guaranteed to work when $A$ is symmetric
positive definite, whereas \noun{minres} only requires $A$ to be
symmetric. Conjugate Gradients also minimizes the norm of the residuals
over the same Krylov subspaces, but not the $2$-norm but the $A^{-1}$-norm,
\[
\|Ax^{(t)}-b\|_{A^{-1}}=\sqrt{\left(Ax^{(t)}-b\right)^{*}A^{-1}\left(Ax^{(t)}-b\right)}\;.
\]
This is equivalent to minimizing the $A$-norm of the error,
\[
\|x-x^{(t)}\|_{A}=\sqrt{\left(x-x^{(t)}\right)^{*}A\left(x-x^{(t)}\right)}\;.
\]

Minimizing the $A$-norm of the residual may seem like an odd idea,
because of the dependence on $A$ in the measurement of the residual
and the error. But there are several good reasons not to worry. First,
we use the $2$-norm of the residual as a stoping criterion, to stop
the iterations only when the $2$-norm is small enough, not when the
$A^{-1}$-norm is small. Second, even when the stopping criterion
is based on the $2$-norm of the residual, Conjugate Gradients usually
converges only slightly slower than \noun{minres}. Still, the minimization
of the $2$-norm of the residual in \noun{minres} is more elegant.

Why do people use Conjugate Gradients if \noun{minres} is more theoretically
appealing? The main reason that that the matrices $M_{t}=Q_{t}R_{t}^{-1}$
and $R_{t}$ that are used in \noun{minres} to form $x^{(t)}$ can
be ill conditioned. This means that in floating-point arithmetic,
the computed $x^{(t)}$ are not always accurate minimizers. In the
Conjugate Gradients method, the columns of the basis matrix (the equivalent
of $M_{t}$) are $A$-conjugate, not arbitrary. This reduces the inaccuracies
in the computation of the approximate solution in each iterations,
so the method is numerically more stable than \noun{minres}.

We shall not derive the details of the Conjugate Gradient method here.
The derivation is similar to that of \noun{minres} (there are also
other ways to derive \noun{cg}), and is presented in many textbooks.

\section{Convergence-Rate Bounds}

The appeal of Krylov-subspace iterations stems to some extent from
the fact that their convergence is easy to understand and to bound.

The crucial step in the analysis is the expression of the residual
$b-Ax^{(t)}$ as a application of a univariate polynomial $\tilde{p}$
to $A$ and a multiplication of the resulting matrix $\tilde{p}(A)$
by $b$. Since $x^{(t)}\in\mathcal{K}_{t}$, $x^{(t)}=K_{t}y$ for
some $y$. That is, $x^{(t)}=y_{1}b+y_{2}Ab+\cdots y_{t}A^{t-1}b$.
Therefore, 
\[
b-Ax^{(t)}=b-y_{1}Ab-y_{2}A^{2}b-\cdots-y_{t}A^{t}b\;.
\]
If we denote $p(z)=1-y_{1}z-y_{2}z^{2}-\cdots-y_{t}z^{t}=1-z\tilde{p}(z)$,
we obtain $b-Ax^{(t)}=p(A)b$. 
\begin{defn}
Let $x^{(t)}\in\mathcal{K}_{t}$ be an approximate solution of $Ax=b$
and let $r^{(t)}=b-Ax^{(t)}$ be the corresponding residual. The polynomial
$\tilde{p}_{t}$ such that $x^{(t)}=\tilde{p}_{t}(A)b$ is called
the \emph{solution polynomial} of the iteration and the polynomial
$p_{t}(z)=1-z\tilde{p}(z)$ is called the \emph{residual polynomial}
of the iteration. 
\end{defn}

Figure~\ref{fig:minres polynomials} shows several \noun{minres}
residual polynomials.

\begin{figure}
\begin{tabular}{cc}
\includegraphics[width=0.45\textwidth]{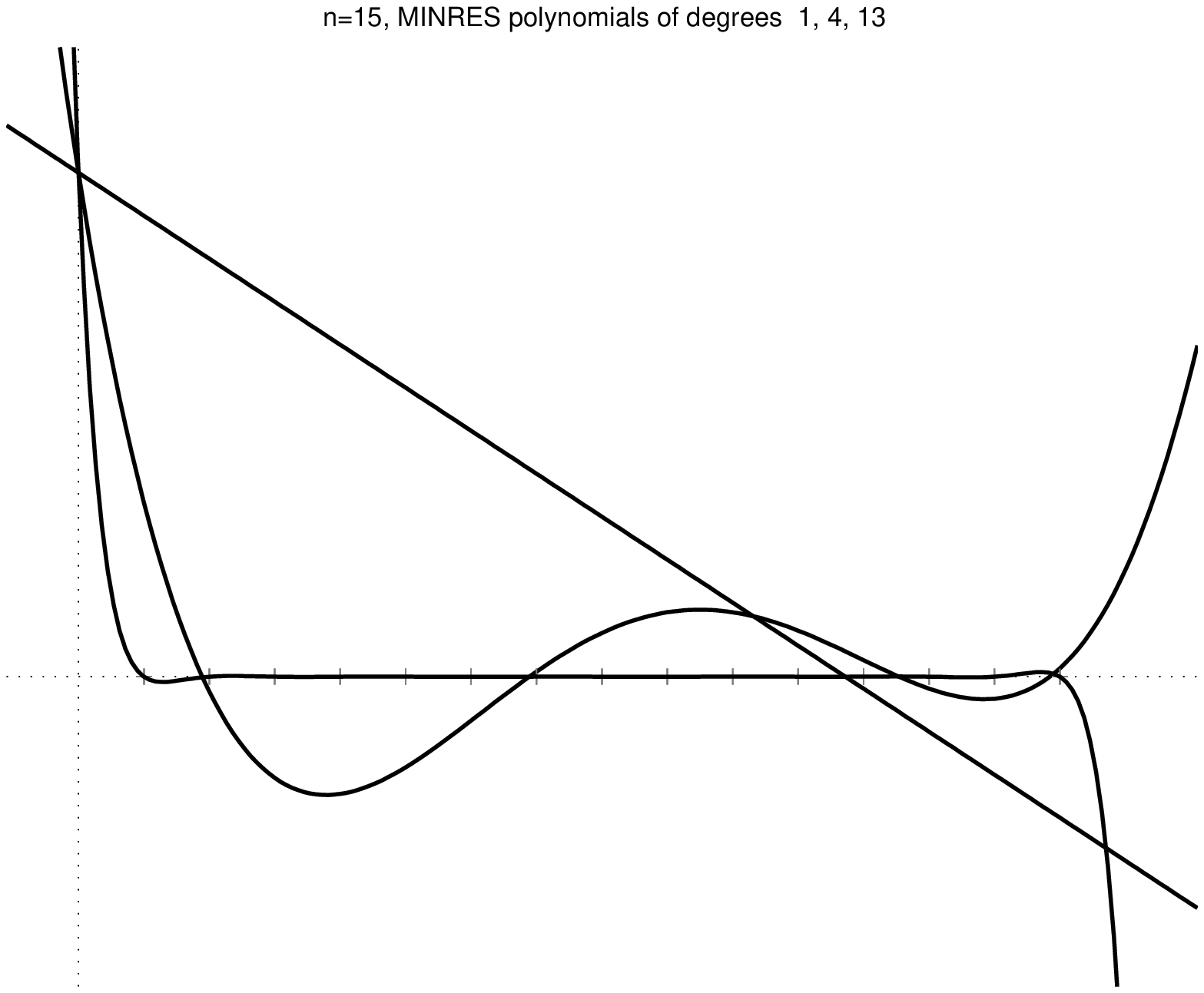} & \includegraphics[width=0.45\textwidth]{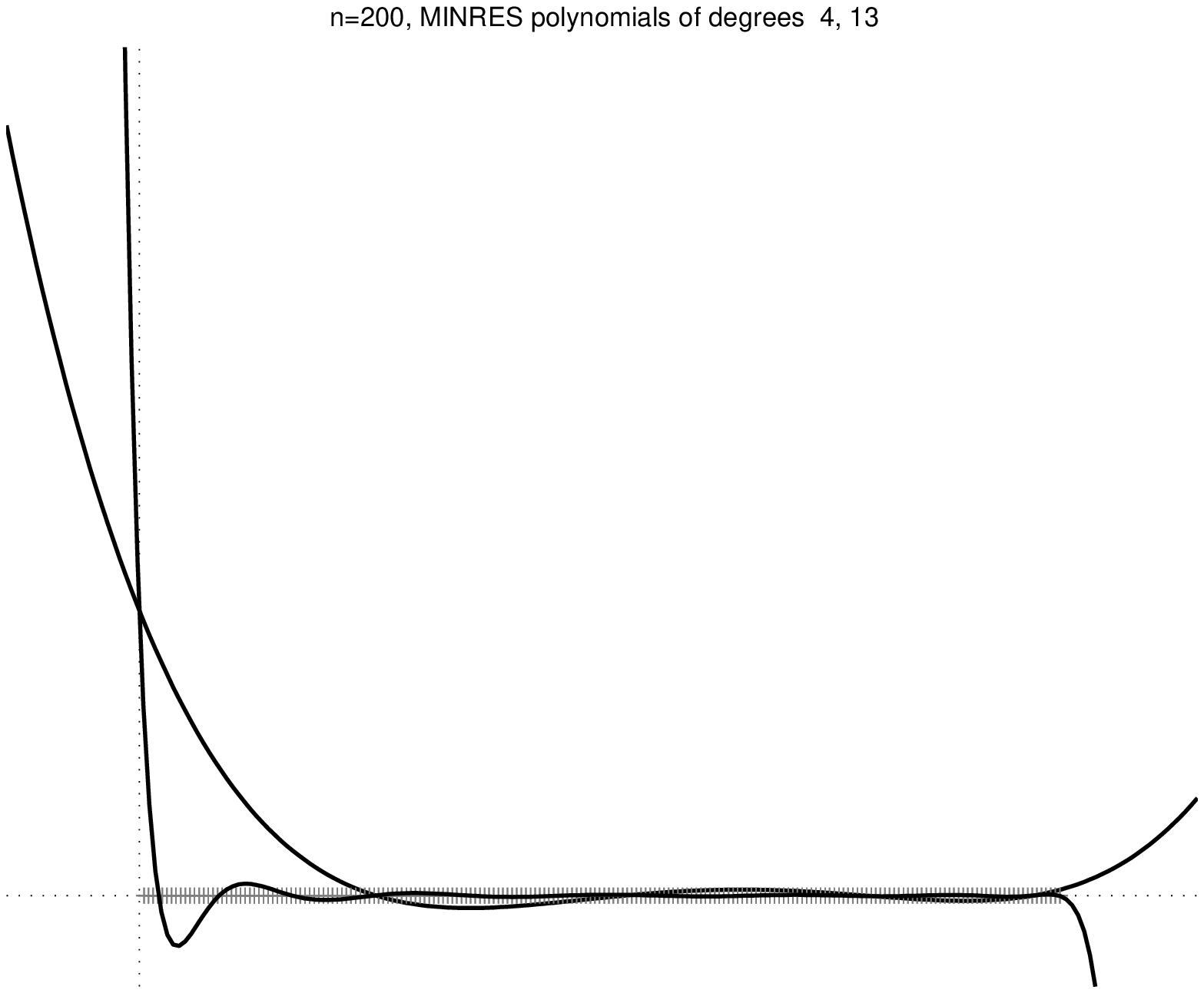}\tabularnewline
\includegraphics[width=0.45\textwidth]{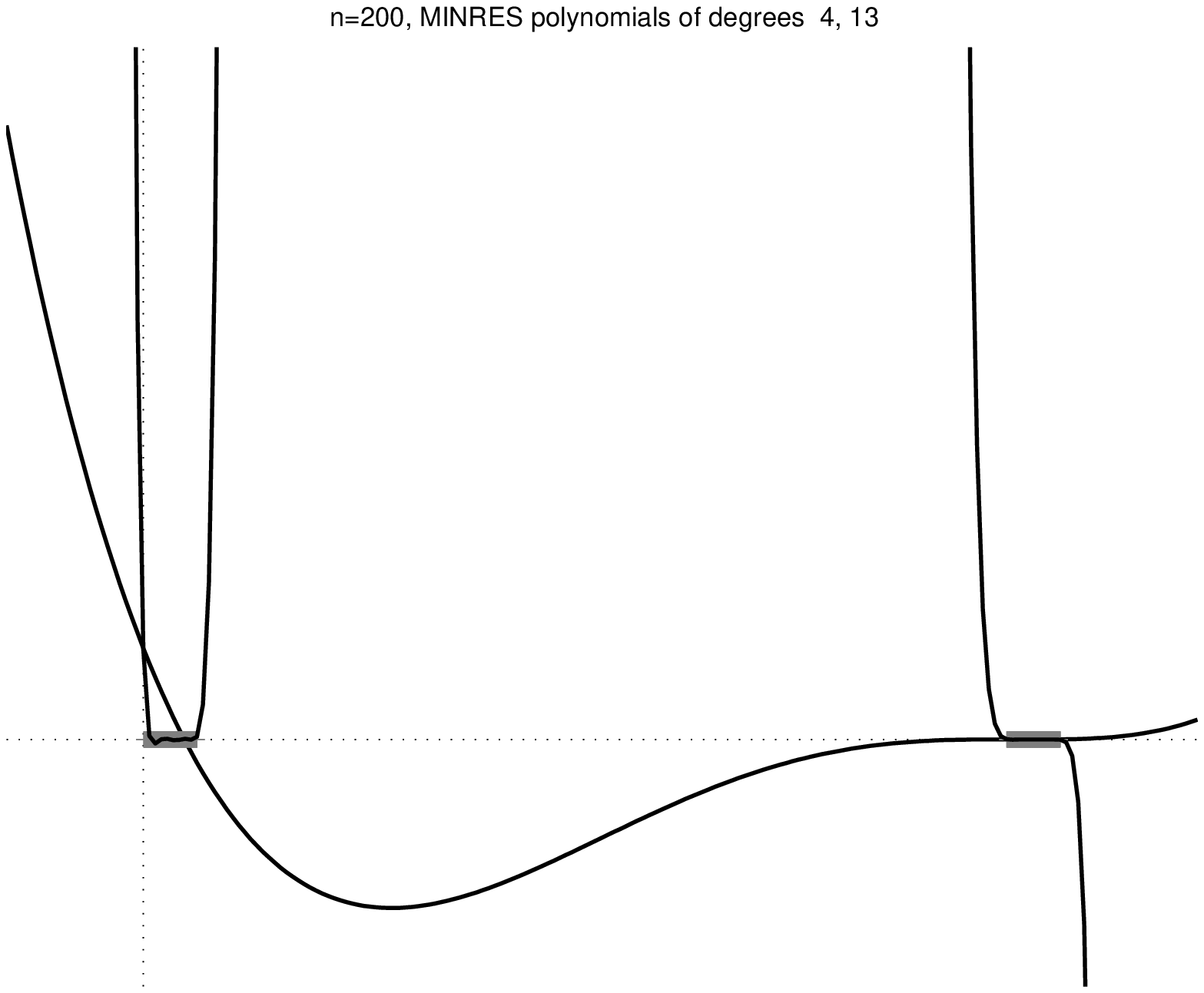} & \includegraphics[width=0.45\textwidth]{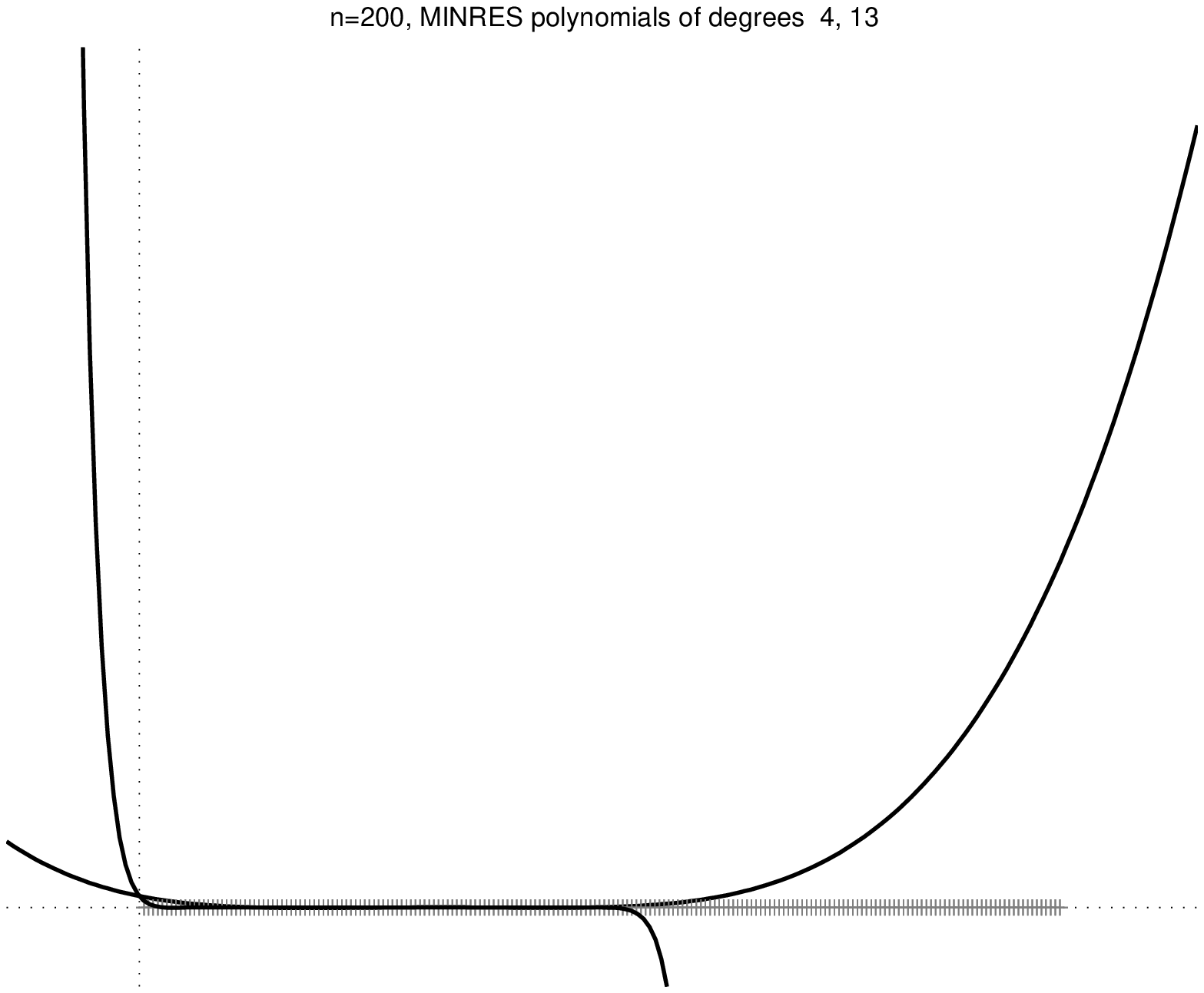}\tabularnewline
\end{tabular}

\caption{\noun{\label{fig:minres polynomials}Minres} residual polynomials
of four linear problems. The order $n$ of the matrices is shown in
each plot, as well as the degree of the polynomials. The eigenvalues
of the matrices are shown using gray tick marks on the $x$ axis.
In all but the bottom-right plot the solution vector is a random vector;
in the bottom-right plot, the solution vector (and hance also the
right-hand side) is a random combination of only $100$ eigenvectors
of $A$, those associated with the $100$ smallest eigenvalues.}
\end{figure}

We now express $p(A)$ in terms of the eigendecomposition of $A$.
Let $A=V\Lambda V^{*}$ be an eigendecomposition of $A$. Since $A$
is Hermitian, $\Lambda$ is real and $V$ is unitary. We have 
\[
p(A)b=p\left(V\Lambda V^{*}\right)b=Vp(\Lambda)V^{*}b\;,
\]
so 
\begin{equation}
\left\Vert b-Ax^{(t)}\right\Vert _{2}=\left\Vert p(A)b\right\Vert _{2}=\left\Vert Vp(\Lambda)V^{*}b\right\Vert _{2}=\left\Vert p(\Lambda)V^{*}b\right\Vert _{2}\;.\label{eq:residual as p(Lambda)}
\end{equation}
We can obtain several bounds on the norm of the residual from this
expression. The most important one is
\begin{eqnarray*}
\left\Vert b-Ax^{(t)}\right\Vert _{2} & = & \left\Vert p(\Lambda)V^{*}b\right\Vert _{2}\\
 & \leq & \left\Vert p(\Lambda)\right\Vert _{2}\left\Vert V^{*}b\right\Vert _{2}=\left\Vert p(\Lambda)\right\Vert _{2}\left\Vert b\right\Vert _{2}\\
 & = & \max_{i=1}^{n}\left\{ \left|p\left(\lambda_{i}\right)\right|\right\} \left\Vert b\right\Vert _{2}\;.
\end{eqnarray*}
The last equality follows from the facts that $p(\Lambda)$ is a diagonal
matrix and that the $2$-norm of a diagonal matrix is the largest
absolute value of an element in it. This proves the following result.
\begin{thm}
\label{thm:polynomial-roots bound for MINRES}The relative $2$-norm
of the residual in the $t$th iteration of \noun{minres}, 
\[
\left\Vert b-Ax^{(t)}\right\Vert _{2}/\left\Vert b\right\Vert _{2}\;,
\]
is bounded by $\max_{i=1}^{n}\left\{ \left|p\left(\lambda_{i}\right)\right|\right\} $
for any univariate polynomial $p$ of degree $t$ such that $p(0)=1$,
where the $\lambda_{i}$'s are the eigenvalues of $A$.
\end{thm}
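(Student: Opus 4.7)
The plan is to leverage the defining minimization property of \noun{minres} together with the polynomial expression for the residual developed just above. The excerpt already establishes that the \noun{minres} residual satisfies $b-Ax^{(t)}=p_t(A)b$ for its own residual polynomial $p_t$, and the eigendecomposition computation gives the desired bound for that specific $p_t$. What remains is to upgrade this from a statement about $p_t$ alone to a statement valid for every polynomial of degree $t$ with $p(0)=1$.

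First I would observe that the map $q\mapsto q(A)b$ sends polynomials of degree at most $t-1$ into $\mathcal{K}_t$, since the monomials $b,Ab,\ldots,A^{t-1}b$ span the Krylov subspace. Consequently, for any polynomial $p$ of degree $t$ with $p(0)=1$, writing $p(z)=1-z\tilde{p}(z)$ gives a polynomial $\tilde{p}$ of degree at most $t-1$, and the vector $\hat{x}:=\tilde{p}(A)b\in\mathcal{K}_t$ is a legitimate candidate iterate whose residual is exactly $b-A\hat{x}=p(A)b$. In other words, every admissible $p$ is realized as the residual polynomial of some vector in $\mathcal{K}_t$.

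Next I would invoke the fact that by construction $x^{(t)}$ minimizes $\|Ax-b\|_2$ over $\mathcal{K}_t$. Therefore $\|b-Ax^{(t)}\|_2\le \|b-A\hat{x}\|_2=\|p(A)b\|_2$ for every admissible $p$. The final step is to repeat the eigendecomposition bound verbatim: writing $A=V\Lambda V^*$, using unitary invariance of the $2$-norm, and using the fact that the spectral norm of the diagonal matrix $p(\Lambda)$ equals $\max_i|p(\lambda_i)|$, we obtain $\|p(A)b\|_2\le \max_i|p(\lambda_i)|\cdot \|b\|_2$. Dividing through by $\|b\|_2$ finishes the argument.

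There is no serious obstacle here; the one substantive point beyond the calculations already carried out is the observation that the minimization property of \noun{minres} lets us substitute an \emph{arbitrary} admissible polynomial in place of the particular residual polynomial actually produced by the algorithm. That flexibility is exactly what makes the theorem useful in practice, since one is then free to choose $p$ a posteriori (for example, a shifted and scaled Chebyshev polynomial on an interval containing the spectrum of $A$) in order to derive concrete convergence-rate estimates.
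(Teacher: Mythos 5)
Your proof is correct and follows essentially the same route as the paper: express the residual of a candidate iterate as $p(A)b$, then bound $\|p(A)b\|_2$ via the eigendecomposition and the fact that the $2$-norm of the diagonal matrix $p(\Lambda)$ is $\max_i|p(\lambda_i)|$. The one thing you do more carefully than the text is to spell out why the bound holds for an \emph{arbitrary} admissible $p$ rather than just the residual polynomial the algorithm actually produces --- namely, that $\hat{x}=\tilde{p}(A)b$ lies in $\mathcal{K}_t$ and \noun{minres} minimizes over that subspace --- a step the paper leaves implicit here (and only makes explicit later, in the proof of the Chebyshev-based convergence bound).
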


We can strenghen this result by noting that if $b$ is a linear combination
of only some of the eigenvectors of $A$, then only the action of
$p$ on corresponding eigenvalues matters (not on all the eigenvalues).
An example of this behavior is shown in the bottom-right plot of Figure~\ref{fig:minres polynomials}.
More formally, from (\ref{eq:residual as p(Lambda)}) we obtain
\[
\left\Vert p(\Lambda)V^{*}b\right\Vert _{2}=\left\Vert \begin{bmatrix}p(\lambda_{1})(v_{1}^{*}b)\\
p(\lambda_{2})(v_{2}^{*}b)\\
\vdots\\
p(\lambda_{n})(v_{n}^{*}b)
\end{bmatrix}\right\Vert _{2}\;.
\]
In general, if $b$ is orthogonal or nearly orthogonal to an eigenvector
$v_{j}$ of $A$, then the product $p(\lambda_{j})(v_{j}^{*}b)$ can
be small even if $p(\lambda_{j})$ is quite large. But since right-hand
sides $b$ with this property are rare in practice, this stronger
bound is not useful for us.

Theorem~\ref{thm:polynomial-roots bound for MINRES} states that
if there are low-degree polynomials that are low on the eigenvalues
of $A$ and assume the value $1$ at $0$, the \noun{minres} converges
quickly. Let us examine a few examples. A degree-$n$ polynomial $p$
can satisfy $p(0)=1$ and $p(\lambda_{i})=0$ simulteneously for any
set of $n$ nonzero eigenvalues $\lambda_{1},\ldots,\lambda_{n}$.
Therefore, in the absense of rounding errors \noun{minres} must converge
after $n$ iterations to the exact solution. We could also derive
this exact-convergence result from the fact that $\mathcal{K}_{n}=\mathbb{R}^{n}$,
but the argument that we just gave characterizes the \noun{minres}
polynomials at or near convergence: their roots are at or near the
eigenvalues of $A$. If $A$ has repeated eigenvalues, then it has
fewer than $n$ distinct eigenvalues, so we expect exact convergence
after fewer than $n$ iterations. Even if $A$ does not have repeated
eigenvalues, but it does have only a few tight clusters of eigenvalues,
then \noun{minres} will converge quickly, because a polynomial with
one root near every cluster and a bounded derivative at the roots
will assume low values at all the roots. On the other hand, a residual
polynomial cannot have small values very close to $0$, because it
must assume the value $1$ at $0$. These examples lead us to the
most important observation about Krylov-subspace solvers:
\begin{quote}
Symmetric Krylov-subspace iterative methods for solving linear systems
of equations $Ax=b$ converge quickly if the eigenvalues of $A$ form
a few tight clusters and if $A$ does not have eigenvalues very close
to $0$.
\end{quote}
Scaling both $A$ and $b$ can cluster the eigenvalues or move them
away from zero, but has no effect at all on convergence. Scaling up
$A$ and $b$ moves the eigenvalues away from zero, but distributes
them on a larger interval; scaling $A$ and $b$ down clusters the
eigenvalues around zero, but this brings them closer to zero. Krylov-subspace
iterations are invariant to scaling.

This observation leads to two questions, one analytic and one constructive:
(1) Exactly how quickly does the iteration converges given some characterization
of the spectrum of $A$? (2) How can we alter the specturm of $A$
in order to accelerate convergence? We shall start with the second
question.

\section{Preconditioning}

Suppose that we have a matrix $B$ that approximates $A$ (in a sense
that will become clear shortly), and whose inverse is easier to apply
than the inverse of $A$. That is, linear systems of the form $Bz=r$
are much easier to solve for $z$ than linear systems $Ax=b$. Perhpas
the sparse Cholesky factorization of $B$ is cheaper to compute than
$A$'s, and perhaps there is another inexpensive way to apply $B^{-1}$
to $r$. If $B$ approximates $A$ in the sense that $B^{-1}A$ is
close to the identity, then an algorithm like \noun{minres }will converge
quickly when applied to the linear system
\begin{equation}
\left(B^{-1}A\right)x=B^{-1}b\;,\label{eq:left preconditioning}
\end{equation}
because the eigenvalues of the coefficient matrix $B^{-1}A$ are clustered
around $1$. We will initialize the algorithm by computing the right-hand
side $B^{-1}$, and in every iteration we will multiply $q_{t}$ by
$A$ and then apply $B^{-1}$ to the product. This technique is called
\emph{preconditioning}.

The particular form of preconditioning that we used in (\ref{eq:left preconditioning})
is called \emph{left preconditioning}, because the inverse of the
preconditioner $B$ is applied to both sides of $Ax=b$ from the left.
Left preconditioning is not appropriate to algorithms line \noun{minres}
and Conjugate Gradients that exploit the symmetry of the coefficient
matrix, because in general $B^{-1}A$ is not symmetric. We could replace
\noun{minres} by a Krylov-subspace iterative solver that is applicable
to unsymmetric matrices, but this would force us to give up either
the efficiency or the optimality of \noun{minres} and Conjugate Gradients.

Fortunately, if $B$ is symmetric positive definite, then there are
forms of preconditioning that are appropriate for symmetric Krylov-subspace
solvers. One form of symmetric preconditioning solves
\begin{equation}
\left(B^{-1/2}AB^{-1/2}\right)\left(B^{1/2}x\right)=B^{-1/2}b\label{eq:symmetric root preconditioning}
\end{equation}
for $x$. A clever transformation of Conjugate Gradients yields an
iterative algorithm in which the matrix that generates the Krylov
subspace is $\left(B^{-1/2}AB^{-1/2}\right)$, but which only applies
$A$ and $B^{-1}$ in every iteration. In other words, $B^{-1/2}$
is never used, not even as a linear operator. We shall not show this
transformation here.

We will show how to use a simpler but equally effective form of preconditioning.
Let $B=LL^{T}$ be the Cholesky factorization of $A$. We shall solve
\begin{equation}
\left(L^{-1}AL^{-T}\right)\left(L^{T}x\right)=L^{-1}b\label{eq:Cholesky preconditioning}
\end{equation}
for $y=L^{T}x$ using \noun{minres}, and then we will solve $L^{T}x=y$
by substitution. To form the right-hand side $L^{-1}b$, we solve
for it by substitution as well. The coefficient matrix $L^{-1}AL^{-T}$
is clearly symmetric, so we can indeed apply \noun{minres} to it.
To apply the coefficient matrix to $q_{t}$ in every iteration, we
apply $L^{-T}$ by substitution, apply $A$, and apply $L^{-1}$ by
substitution.

\begin{figure}
\begin{tabular}{cc}
\includegraphics[width=0.45\textwidth]{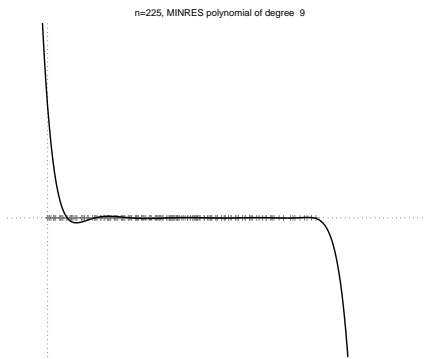} & \tabularnewline
\end{tabular}

\caption{\noun{\label{fig:minres polynomials w/preconditioning}Minres} residual
polynomials for a $15$-by-$15$ two-dimensional mesh (both graphs).
The graph on the left shows polynomial from the application of \noun{minres}
directly to the original linear problem $Ax=b$, and the graph on
the right shows a polynomial from the application of \noun{minres}
to a preconditioned problem $\left(L^{-1}AL^{-T}\right)\left(L^{T}x\right)=\left(L^{-1}b\right)$.
The scaling of the axes in the two graphs are the same.}
\end{figure}

Figure~\ref{fig:minres polynomials w/preconditioning} shows the
spectrum and one \noun{minres }polynomial for two matrices: a two-dimensional
mesh and the same mesh preconditioned as in (\ref{eq:Cholesky preconditioning})
with a Joshi preconditioner $B$. We can see that this particular
preconditioner causes three changes in the spectrum. The most important
change is the small eigenvalues of $L^{-1}AL^{-T}$ are much larger
than the small eigenvalues of $A$. This allows the \noun{minres }polynomial
to assume much smaller vaues on the spectrum. The \noun{minres} polynomials
must assume the value $1$ at $0$, so they tend to have large values
on the neighborhood of $0$ as well. Therefore, a specturm with larger
smallest eigenvalues leads to faster convergence. Indeed, the preconditioned
problem decreased the size of the residual by a factor $10^{-14}$
in $46$ iterations, whereas the unpreconditioned problem took $110$
iterations to achieve a residual with a similar norm. Two other changes
that the preconditioner caused are a large gap in the middle of the
spectrum, which is a good thing, and a larger largest eigenvalue,
which is not. But these two changes are probably less important than
the large increase in the smallest eigenvalues.

The different forms of preconditioning differ in the algorithmic details
of the solver, but they all have the same spectrum.
\begin{thm}
\label{thm:equivalence of preconditioners}Let $A$ be a symmetric
matrix and let $B=LL^{T}$ be a symmetric positive-definite matrix.
A scalar $\lambda$ is either an eigenvalue of all the following eigenvalue
problems or of none of them:
\begin{eqnarray*}
B^{-1}Ax & = & \lambda x\\
B^{-1/2}AB^{-1/2}y & = & \lambda y\\
L^{-1}AL^{-T}z & = & \lambda z\\
Aw & = & \lambda Bw
\end{eqnarray*}
\end{thm}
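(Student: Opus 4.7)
The plan is to show that all four eigenvalue problems are equivalent to the generalized problem $Aw=\lambda Bw$ by invertible changes of variable, and hence that they share the same spectrum. The central fact I would exploit throughout is that since $B$ is symmetric positive definite, the matrices $B$, $B^{1/2}$, and $L$ are all invertible; this ensures that every substitution I make below is a bijection on $\mathbb{R}^n$, so no eigenvalues can be gained or lost in the process.

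First I would dispatch the pair $(B^{-1}A)x=\lambda x$ and $Aw=\lambda Bw$: multiplying the generalized problem on the left by $B^{-1}$ gives the first form (with the same eigenvector $x=w$), and multiplying the first by $B$ reverses the step. Next, for the symmetrically-preconditioned form $B^{-1/2}AB^{-1/2}y=\lambda y$, I would substitute $y=B^{1/2}x$ and then left-multiply by $B^{-1/2}$ to obtain $B^{-1}Ax=\lambda x$; the reverse substitution works the same way. Finally, for the Cholesky-preconditioned form, I would set $z=L^T x$, so that $L^{-1}AL^{-T}z=\lambda z$ becomes $L^{-1}Ax=\lambda L^T x$, which on left-multiplication by $L$ reads $Ax=\lambda LL^T x=\lambda Bx$.

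Chaining these three bijections gives a single scalar $\lambda$ satisfying either all four eigenvalue equations or none. I do not anticipate a genuine obstacle: the only thing that needs care is to verify at each step that the substitution matrix ($B$, $B^{1/2}$, or $L$) is invertible, which is immediate from $B\succ 0$. If one wanted a stronger statement relating multiplicities or eigenspaces, one would additionally observe that each substitution is a linear isomorphism, so geometric multiplicities are preserved as well; but for the statement as given, only the preservation of the set of scalars $\lambda$ is required, and that follows directly from the equivalences above.
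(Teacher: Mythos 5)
Your proposal is correct and uses exactly the substitutions the paper's own (very terse) proof lists, namely $x=w$, $y=B^{1/2}x$, and $z=L^{T}x$, together with the invertibility of $B$, $B^{1/2}$, and $L$; you have simply written out the verification the paper leaves implicit. No gaps.
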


\begin{proof}
The following relations prove the equivalence of the specta:
\begin{eqnarray*}
x & = & B^{-1/2}y\\
y & = & B^{1/2}x\\
x & = & L^{-T}z\\
z & = & L^{T}x\\
x & = & w
\end{eqnarray*}
\end{proof}
We can strengthen this theorem to also include certain semidefinite
preconditioners.
\begin{thm}
Let $A$ be a symmetric matrix and let $B=LL^{T}$ be a symmetric
positive-semidefinite matrix such that $\textrm{null}(B)=\textrm{null}(A)$.
Denote by $X^{+}$ the pseudo-inverse of a matrix $X$. A scalar $\lambda$
is either an eigenvalue of all the following eigenvalue problems or
of none of them:
\begin{eqnarray*}
B^{+}Ax & = & \lambda x\\
\left(B^{+}\right)^{1/2}A\left(B^{+}\right)^{1/2}y & = & \lambda y\\
\left(L^{+}\right)A\left(L^{+}\right)^{T}z & = & \lambda z\\
Aw & = & \lambda Bw
\end{eqnarray*}
\end{thm}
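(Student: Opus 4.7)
My plan mirrors the substitution-based proof of Theorem~\ref{thm:equivalence of preconditioners} but with careful treatment of the common null space. The first step is to assemble the algebraic infrastructure: since $\textrm{null}(A)=\textrm{null}(B)$, the subspaces $\textrm{range}(A)$ and $\textrm{range}(B)$ coincide, and I denote by $P$ the orthogonal projector onto this common subspace. The identities I will use repeatedly are $BB^{+}=B^{+}B=P$, $B^{1/2}(B^{+})^{1/2}=(B^{+})^{1/2}B^{1/2}=P$, $LL^{+}=P$, and $PA=AP=A$; the last holds because $A$ vanishes on $\textrm{null}(A)=\textrm{range}(P)^{\perp}$.

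Second, I would eliminate the degenerate eigenvectors. Any $v\in\textrm{null}(B)$ satisfies $Av=Bv=B^{+}Av=(B^{+})^{1/2}A(B^{+})^{1/2}v=L^{+}A(L^{+})^{T}v=0$, so $v$ contributes only the trivial zero eigenpair on the matrix sides and solves $Aw=\lambda Bw$ for every $\lambda$ on the generalized side. It therefore suffices to establish the equivalence of eigenpairs whose eigenvectors lie in $\textrm{range}(B)$. This restriction loses no nondegenerate information: in the three matrix eigenvalue problems, the left-hand side automatically lies in $\textrm{range}(B^{+})=\textrm{range}(B)$, so any eigenvector for a nonzero eigenvalue lies in $\textrm{range}(B)$; and $Aw=\lambda Bw$ is unchanged if we replace $w$ by its projection $Pw$.

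Third, restricted to $\textrm{range}(B)$, I would use substitutions analogous to those in Theorem~\ref{thm:equivalence of preconditioners}, namely $y=B^{1/2}x$, $z=L^{T}x$, and $w=x$, and verify that each carries one eigenvalue equation to the next. For example, substituting $x=(B^{+})^{1/2}y$ into $B^{+}Ax=\lambda x$ and multiplying on the left by $B^{1/2}$ yields
\[
B^{1/2}B^{+}A(B^{+})^{1/2}y=\lambda B^{1/2}(B^{+})^{1/2}y,
\]
which collapses via $B^{1/2}B^{+}=(B^{+})^{1/2}$ and $B^{1/2}(B^{+})^{1/2}y=Py=y$ (since $y\in\textrm{range}(B)$) to $(B^{+})^{1/2}A(B^{+})^{1/2}y=\lambda y$. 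The other three equivalences reduce to analogous short manipulations that use only the projector identities above together with $PA=A$.

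The main obstacle I anticipate is bookkeeping with the pseudo-inverses of $L$. Whereas $B^{+}$ is canonically determined by $B$, the identity $B^{+}=(L^{+})^{T}L^{+}$ needed in the $L$-equivalence is sensitive to the choice of $L$; it holds whenever $L$ is compatible with the range of $B$ (for example, when $\textrm{range}(L)=\textrm{range}(B)$, as is the case for the eigendecomposition-based square-root factor or any pivoted Cholesky factor of $B$). I would need to pin down this hypothesis on $L$ explicitly and then verify the identity; once that is done, the range-preservation checks and the projector manipulations above produce the desired equivalence.
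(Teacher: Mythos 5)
Your proposal is correct and follows essentially the same route as the paper: the paper's proof simply reuses the substitutions $y=B^{1/2}x$, $z=L^{T}x$, $w=x$ from Theorem~\ref{thm:equivalence of preconditioners} with inverses replaced by pseudo-inverses, observing that for $\lambda\neq0$ the eigenvectors avoid $\textrm{null}(A)$; your projector identities and the explicit reduction to $\textrm{range}(B)$ just make rigorous what the paper leaves implicit. The one obstacle you anticipate is not actually there: the Moore--Penrose identity $\left(LL^{T}\right)^{+}=\left(L^{+}\right)^{T}L^{+}$ holds for \emph{every} matrix $L$ (immediate from the SVD of $L$), and $\textrm{range}(L)=\textrm{range}(LL^{T})=\textrm{range}(B)$ is automatic, so no additional hypothesis on the factor $L$ needs to be pinned down.
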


\begin{proof}
We note that $\textrm{null}(L^{+})=\textrm{null}((L^{+})^{T})=\textrm{null}(B^{+})=\textrm{null}(B)=\textrm{null}(A)$.
Therefore, $\lambda=0$ is an eigenvalue of all the above problems.
If $\lambda\neq0$ is an eigenvalue of one of the above problems,
then the corresponding eigenvector is not in $\textrm{null}(A)$.
This implies that the relations defined in the proof of Theorem~\ref{thm:equivalence of preconditioners},
with inverses replaced by pseudo-inverses, define relations between
nonzero vectors. Therefore, $\lambda$ is an eigenvalue of all the
eigenvalue problems.
\end{proof}
Even though the different forms of preconditioning are equivalent
in terms of the spectra of the coefficient matrices, they are different
algorithmically. If symmetry is not an issue (e.g., if $A$ itself
is unsymmetric), the form $B^{-1}A$ is the most general. When $A$
is symmetric, we usually require that $B$ is symmetric positive-definite
(or semi-definite with the same null space as $A$). In this case,
the form $B^{-1/2}AB^{-1/2}$, when coupled with the transformation
that allows multiplications only by $B^{-1}$ (and not by $B^{-1/2}$),
is more widely applicable than the form $L^{-1}AL^{-T}$, because
the latter requires a Cholesky factorization of $B$, whereas in the
former any method of applying $B^{-1}$ can be used.

One issue that arizes with any form of preconditioning is the definition
of the residual. If we apply \noun{minres} to $\left(L^{-1}AL^{-T}\right)\left(L^{T}x\right)=L^{-1}b$,
say, it minimizes the $2$-norm of the \emph{preconditioned residual}
\[
L^{-1}b-\left(L^{-1}AL^{-T}\right)\left(L^{T}x^{(t)}\right)=L^{-1}b-L^{-1}Ax^{(t)}=L^{-1}\left(b-Ax^{(t)}\right)\;.
\]
Thus, the true residual $b-Ax^{(t)}$ in preconditioned \noun{minres}
may not be minimal. This is roughly the same issue as with the norms
used in Conjugate Gradients: we minimize the residual in a norm that
is related to $A$.

\section{Chebyshev Polynomials and Convergence Bounds}

The link between Krylov-subspace iterations and polynomials suggests
another idea. Given some information on the spectrum of $A$, we can
try to analytically define a sequence $\tilde{p}_{t}$ of solution
polynomials such that for any $b$ the vector $x^{(t)}\equiv\tilde{p}_{t}(A)b\in\mathcal{K}_{t}$
is a good approximation to $x$, the exact solution of $Ax=b$. More
specifically, we can try to define $\tilde{p}_{t}$ such that the
residuals $b-Ax^{(t)}$ are small. We have seen that the residual
can be expressed as $p_{t}(A)b$, where $p_{t}(z)=1-z\tilde{p}_{t}(z)$
is the residual polynomial. Therefore, if $\tilde{p}_{t}$ is such
that $p_{t}$ assumes low values on the eigenvalues of $A$ and satisfies
$p_{t}(0)=1$, then $x^{(t)}$ is a good approximate solution. This
idea can be used both to construct iterative solvers and to prove
bounds on the convergence of methods like \noun{minres} and Conjugate
Gradients.

One obvious problem is that we do not know what the eigenvalues of
$A$ are. Finding the eigenvalues is more difficult than solving $Ax=b$.
However, in some cases we can use the structure of $A$ (even with
preconditioning) to derive bounds on the smallest and largest eigenvalues
of a positive-definite martrix $A$, denoted $\lambda_{\min}$ and
$\lambda_{\max}$. 

Suppose that we somehow obtained bounds on the extereme eigenvalues
of $A$, 
\[
0<\rho_{\min}\leq\lambda_{\min}\leq\lambda_{i}\leq\lambda_{\max}\leq\rho_{\max}\;.
\]
We shall not discuss here how we might obtain $\rho_{\min}$ and $\rho_{\max}$;
this is the topic of much of the rest of the book. It turns out that
we can build a sequence $p_{t}$ of polynomials such that
\begin{enumerate}
\item $p_{t}(0)=1$, and
\item $\max_{z\in[\rho_{\min},\rho_{\max}]}\left|p_{t}(z)\right|$ is as
small as possible for a degree $t$ polynomial with value $1$ at
$0$.
\end{enumerate}
The polynomials that solve this optimization problem are derived from
\emph{Chebyshev} polynomials, which can be defined using the recurrence
\begin{eqnarray*}
c_{0}(z) & = & 1\\
c_{1}(z) & = & z\\
c_{t}(z) & = & 2zc_{t-1}(z)-c_{t-2}(z)\;.
\end{eqnarray*}
The polynomials that reduce the residual are
\[
p_{t}(z)=\frac{1}{c_{t}\left(\frac{\rho_{\max}+\rho_{\min}}{\rho_{\max}-\rho_{\min}}\right)}c_{t}\left(\frac{\rho_{\max}+\rho_{\min}-2z}{\rho_{\max}-\rho_{\min}}\right)\;.
\]

\subsection*{An Iterative Linear Solver based on Chebyshev Polynomials}

Our first application of Chebyshev polynomials is an iterative Krylov-subspace
solver based on them. We will refer to this solver as the \emph{Krylov-Chebyshev}
solver\footnote{The algorithm that we describe below is related to a more well-known
Chebychev linear solver that is used with matrix splittings.}. The polynomials $p_{t}$ implicitly define polynomials $\tilde{p}_{t}$
that we can use to construct approximate solutions. The residual for
an approximate solution $x^{(t)}$ is $r^{(t)}=b-Ax^{(t)}$. If we
define $x^{(t)}=\tilde{p}_{t}(A)b$, we have $r^{(t)}=p_{t}(A)b=b-A\tilde{p}_{t}(A)b$.

We now derive recurrences for $x^{(t)}$ and $r^{(t)}$. To keep the
notation simple, we define $\rho_{+}=\rho_{\max}+\rho_{\min}$ and
$\rho_{-}=\rho_{\max}+\rho_{\min}$. For the two base cases we have
\begin{eqnarray*}
r^{(0)}=p_{0}(A)b & = & c_{0}^{-1}\left(\frac{\rho_{+}}{\rho_{-}}\right)c_{0}\left(\frac{\rho_{+}}{\rho_{-}}I-\frac{2}{\rho_{-}}A\right)b=Ib=b\\
r^{(1)}=p_{1}(A)b & = & c_{1}^{-1}\left(\frac{\rho_{+}}{\rho_{-}}\right)c_{1}\left(\frac{\rho_{+}}{\rho_{-}}I-\frac{2}{\rho_{-}}A\right)b\\
 & = & \left(\frac{\rho_{+}}{\rho_{-}}\right)^{-1}\left(\frac{\rho_{+}}{\rho_{-}}I-\frac{2}{\rho_{-}}A\right)b=b-\frac{2}{\rho_{+}}Ab\;.
\end{eqnarray*}
This implies that 
\begin{eqnarray*}
x^{(0)} & = & 0\\
x^{(1)} & = & \frac{2}{\rho_{+}}b\;.
\end{eqnarray*}
For $t\geq2$ we have
\begin{eqnarray*}
p_{t}(A)b & = & c_{t}^{-1}\left(\frac{\rho_{+}}{\rho_{-}}\right)c_{t}\left(\frac{\rho_{+}}{\rho_{-}}I-\frac{2}{\rho_{-}}A\right)b\\
 & = & c_{t}^{-1}\left(\frac{\rho_{+}}{\rho_{-}}\right)\left(2\left(\frac{\rho_{+}}{\rho_{-}}I-\frac{2}{\rho_{-}}A\right)c_{t-1}\left(\frac{\rho_{+}}{\rho_{-}}I-\frac{2}{\rho_{-}}A\right)-c_{t-2}\left(\frac{\rho_{+}}{\rho_{-}}I-\frac{2}{\rho_{-}}A\right)\right)b\\
\\
 & = & c_{t}^{-1}\left(\frac{\rho_{+}}{\rho_{-}}\right)\left(2\left(\frac{\rho_{+}}{\rho_{-}}I-\frac{2}{\rho_{-}}A\right)c_{t-1}\left(\frac{\rho_{+}}{\rho_{-}}\right)p_{t-1}\left(A\right)-c_{t-2}\left(\frac{\rho_{+}}{\rho_{-}}\right)p_{t-2}\left(A\right)\right)b\\
 & = & c_{t}^{-1}\left(\frac{\rho_{+}}{\rho_{-}}\right)\left(2\left(\frac{\rho_{+}}{\rho_{-}}I-\frac{2}{\rho_{-}}A\right)c_{t-1}\left(\frac{\rho_{+}}{\rho_{-}}\right)r^{(t-1)}-c_{t-2}\left(\frac{\rho_{+}}{\rho_{-}}\right)r^{(t-2)}\right)\;.
\end{eqnarray*}
To compute $r^{(t)}$ from this recurrence, we need $r^{(t-1)}$ and
$r^{(t-2)}$, three elements of the sequence $c_{t}(\rho_{+}/\rho_{-})$,
and one multiplication of a vector by $A$. Therefore, we can compute
$r^{(t)}$ and $c_{t}(\rho_{+}/\rho_{-})$ concurrently in a loop.
From the recurrence for $r^{(t)}$ we can derive a recurrence for
$x^{(t)}$,
\begin{eqnarray*}
r^{(t)} & = & p_{t}(A)b\\
 & = & c_{t}^{-1}\left(\frac{\rho_{+}}{\rho_{-}}\right)\left(\frac{2\rho_{+}}{\rho_{-}}c_{t-1}\left(\frac{\rho_{+}}{\rho_{-}}\right)r^{(t-1)}-\frac{4}{\rho_{-}}Ac_{t-1}\left(\frac{\rho_{+}}{\rho_{-}}\right)r^{(t-1)}-c_{t-2}\left(\frac{\rho_{+}}{\rho_{-}}\right)r^{(t-2)}\right)\\
 & = & c_{t}^{-1}\left(\frac{\rho_{+}}{\rho_{-}}\right)\biggl(\frac{2\rho_{+}}{\rho_{-}}c_{t-1}\left(\frac{\rho_{+}}{\rho_{-}}\right)\left(b-Ax^{(t-1)}\right)\\
 &  & \quad\quad\quad\quad\quad\quad-\frac{4}{\rho_{-}}Ac_{t-1}\left(\frac{\rho_{+}}{\rho_{-}}\right)r^{(t-1)}\\
 &  & \quad\quad\quad\quad\quad\quad-c_{t-2}\left(\frac{\rho_{+}}{\rho_{-}}\right)\left(b-Ax^{(t-2)}\right)\biggr)\;.
\end{eqnarray*}
so
\begin{eqnarray*}
x^{(t)} & = & \tilde{p}_{t}(A)b\\
 & = & c_{t}^{-1}\left(\frac{\rho_{+}}{\rho_{-}}\right)\biggl(\frac{2\rho_{+}}{\rho_{-}}c_{t-1}\left(\frac{\rho_{+}}{\rho_{-}}\right)x^{(t-1)}\\
 &  & \quad\quad\quad\quad\quad\quad+\frac{4}{\rho_{-}}c_{t-1}\left(\frac{\rho_{+}}{\rho_{-}}\right)r^{(t-1)}\\
 &  & \quad\quad\quad\quad\quad\quad-c_{t-2}\left(\frac{\rho_{+}}{\rho_{-}}\right)x^{(t-2)}\biggr)\;.
\end{eqnarray*}

\begin{figure}
\begin{centering}
\begin{tabular}{cc}
\includegraphics[width=0.45\textwidth]{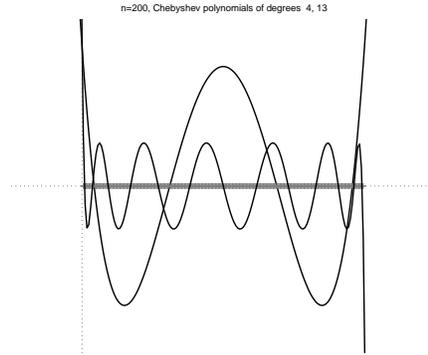} & \tabularnewline
\end{tabular}
\par\end{centering}
\caption{\noun{\label{fig:chebyshev polynomials}}Chebyshev residual polynomials
for two $200$-by-$200$ matrices with different spectra, where $\rho_{\min}=\lambda_{\min}$
is set at the minimal eigenvalue and $\rho_{\max}=\lambda_{\max}$
is set at the largest eigenvalue. The two matrices have the same extreme
eigenvalues but otherwise their spectra is very different. The Chebyshev
polynomials depend only on $\rho_{\max}$ and $\rho_{\min}$. Compare
to the \noun{minres }polynomials for the similar spectra in Figure~\ref{fig:minres polynomials}.}
\end{figure}

This algorithm converges more slowly than \noun{minres }and Conjugate
Gradients. \noun{Minres} is guaranteed to minimize the residual over
all $x^{(t)}$ in $\mathcal{K}_{t}$. The solution that we obtain
from the Krylov-Chebyshev recurrences is in $\mathcal{K}_{t}$, so
it cannot yield a smaller residual than the \noun{minres }solution.
Its main algorithmic advantage over \noun{minres} and Conjugate Gradients
is that the polynomials $\tilde{p}_{t}$ that it produces depend only
on $t$ and on $\rho_{\min}$ and $\rho_{\max}$, but they do not
depend on $b$. Therefore, $\tilde{p}_{t}(A)$ is a fixed linear operator,
so it can be used as a preconditioner $B^{-1}=\tilde{p}_{t}(A)$.
In contrast, the polynomials that \noun{minres} and Conjugate Gradients
generate depend on $b$, so we cannot use these solvers as preconditioners
unless we set very strict convergence bounds, in which case an application
of these solvers is numerically indistiguishable from an application
of $A^{-1}$.

Since this algorithm does not exploit the symmetry of $A$, it can
be used with left preconditioning, not only with symmetric forms of
preconditioning.

\subsection*{Chebyshev-Based Convergence Bounds}

We can also use the Chebyshev iteration to bound the convergence of
\noun{minres }and Conjugate Gradients. The key is the following theorem,
which we state without a proof.
\begin{thm}
\label{thm:Chebyshev polynomial bounds}Let $p(z)$ be the Chebyshev
polynomials defined above with respect to the interval $[\lambda_{\min},\lambda_{\max}]$.
Denote by $\kappa$ the ratio $\kappa=\lambda_{\max}/\lambda_{\min}$.
For any $z\in[\lambda_{\min},\lambda_{\max}]$ we have
\[
\left|p_{t}(z)\right|\leq2\left(\left(\frac{\sqrt{\kappa}+1}{\sqrt{\kappa}-1}\right)^{t}+\left(\frac{\sqrt{\kappa}+1}{\sqrt{\kappa}-1}\right)^{-t}\right)^{-1}\leq2\left(\frac{\sqrt{\kappa}-1}{\sqrt{\kappa}+1}\right)^{t}\;.
\]
\end{thm}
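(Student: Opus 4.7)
The plan is to exploit the two classical identities for Chebyshev polynomials: $c_t(\cos\theta)=\cos(t\theta)$ on $[-1,1]$ and $c_t(\cosh\xi)=\cosh(t\xi)$ for $|z|\geq 1$. Both follow from the three-term recurrence via standard angle-addition / hyperbolic-addition formulas, and together they give (a) $|c_t(w)|\leq 1$ for $w\in[-1,1]$ and (b) an explicit exponential-style formula for $c_t$ outside that interval. Since $p_t$ is defined as a rescaling of $c_t$, bounding $|p_t(z)|$ reduces to bounding the numerator above by $1$ and computing the denominator exactly.

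First I would set $\rho_+=\lambda_{\max}+\lambda_{\min}$ and $\rho_-=\lambda_{\max}-\lambda_{\min}$ and observe that the affine map $z\mapsto (\rho_+-2z)/\rho_-$ sends $[\lambda_{\min},\lambda_{\max}]$ bijectively onto $[-1,1]$. Hence for every $z$ in the interval of interest the numerator $c_t\!\left((\rho_+-2z)/\rho_-\right)$ has absolute value at most $1$, by (a). Thus
\[
|p_t(z)|\ \leq\ \frac{1}{\bigl|c_t(\rho_+/\rho_-)\bigr|}.
\]

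Next I would evaluate the denominator using (b). Writing $\kappa=\lambda_{\max}/\lambda_{\min}$, dividing numerator and denominator by $\lambda_{\min}$ gives $\rho_+/\rho_-=(\kappa+1)/(\kappa-1)>1$, so set $\cosh\xi=(\kappa+1)/(\kappa-1)$. Solving the quadratic $e^{2\xi}-\tfrac{2(\kappa+1)}{\kappa-1}e^\xi+1=0$ yields
\[
e^{\xi}=\frac{(\kappa+1)+2\sqrt{\kappa}}{\kappa-1}=\frac{(\sqrt{\kappa}+1)^2}{(\sqrt{\kappa}-1)(\sqrt{\kappa}+1)}=\frac{\sqrt{\kappa}+1}{\sqrt{\kappa}-1},
\]
after factoring $(\sqrt{\kappa}\pm 1)^2$. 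Then (b) gives
\[
c_t(\rho_+/\rho_-)=\cosh(t\xi)=\tfrac{1}{2}\!\left(\!\left(\tfrac{\sqrt{\kappa}+1}{\sqrt{\kappa}-1}\right)^{t}+\left(\tfrac{\sqrt{\kappa}+1}{\sqrt{\kappa}-1}\right)^{-t}\right),
\]
which, substituted above, yields the first inequality in the theorem. The second inequality is immediate by discarding the strictly positive term $\bigl((\sqrt{\kappa}+1)/(\sqrt{\kappa}-1)\bigr)^{-t}$ from the denominator.

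The only real obstacle is the identity $c_t(\cosh\xi)=\cosh(t\xi)$; the simplest way to dispatch it is a one-line induction on $t$ using the recurrence $c_t=2zc_{t-1}-c_{t-2}$ and the hyperbolic sum formula $\cosh(t\xi)=2\cosh\xi\cosh((t{-}1)\xi)-\cosh((t{-}2)\xi)$. Once this is in hand, the rest is purely algebraic simplification, with the clean factorization $(\sqrt{\kappa}\pm 1)^2=\kappa\pm 2\sqrt{\kappa}+1$ being the only step one has to spot.
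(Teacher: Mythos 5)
Your proof is correct. The paper states this theorem explicitly without proof, so there is no in-text argument to compare against; your derivation is the classical one and fills the gap cleanly. The two key steps both check out: the affine map $z\mapsto(\rho_+-2z)/\rho_-$ does send $[\lambda_{\min},\lambda_{\max}]$ onto $[-1,1]$ (with $c_t$ bounded by $1$ there via $c_t(\cos\theta)=\cos(t\theta)$), and the evaluation of $c_t(\rho_+/\rho_-)$ via $\cosh\xi=(\kappa+1)/(\kappa-1)$, $e^{\xi}=(\sqrt{\kappa}+1)/(\sqrt{\kappa}-1)$ is exactly right, with both identities following from the three-term recurrence by the one-line induction you describe. (Implicitly you assume $\kappa>1$ so that $\rho_->0$; the degenerate case $\lambda_{\min}=\lambda_{\max}$ is excluded by the statement anyway. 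You also correctly repair the paper's typo defining $\rho_-$ with a plus sign.)
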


In this theorem, $0<\lambda_{\min}\leq\lambda_{\max}$ are arbitrary
positive numbers that denote then end of the inteval that defines
$p_{t}$. But when these numbers are the extreme eigenvalues of a
symmetric positive definite matrix, the ratio $\kappa=\lambda_{\max}/\lambda_{\min}$
plays an important-enough role in numerical linear algebra to deserve
a name.
\begin{defn}
Let $A$ be a symmetric semidefinite matrix, and let $\lambda_{\min}$
and $\lambda_{\max}$ be its extreme nonzero eigenvalue. The ratio
\[
\kappa=\frac{\lambda_{\max}}{\lambda_{\max}}=\left\Vert A\right\Vert _{2}\left\Vert A^{+}\right\Vert _{2}
\]
is called the \emph{spectral condition number} of $A$. The definition
\[
\kappa=\left\Vert A\right\Vert \left\Vert A^{+}\right\Vert 
\]
generalizes the condition number to any matrix and to any norm.
\end{defn}

We can use Theorem~\ref{thm:Chebyshev polynomial bounds} to bound
the residuals in \noun{minres}. 
\begin{thm}
Consider the application of \noun{minres} to the linear system $Ax=b$.
Let $r^{(t)}$ be the \noun{minres} residual at iteration $t$. Then

\[
\frac{\left\Vert r^{(t)}\right\Vert _{2}}{\left\Vert b\right\Vert _{2}}\leq2\left(\frac{\sqrt{\kappa}-1}{\sqrt{\kappa}+1}\right)^{t}\;.
\]
\end{thm}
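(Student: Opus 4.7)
The plan is to combine the two earlier results in a direct way: Theorem~\ref{thm:polynomial-roots bound for MINRES} lets me bound the \noun{minres} residual by $\max_i |p(\lambda_i)|$ for \emph{any} degree-$t$ polynomial with $p(0)=1$, so I get to pick the most convenient polynomial, and Theorem~\ref{thm:Chebyshev polynomial bounds} provides exactly the estimate I need for the scaled-and-shifted Chebyshev polynomial. The proof is therefore essentially a one-line substitution, once the right candidate polynomial is named.

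Concretely, I would first note that since $A$ is (implicitly) symmetric positive definite, $0 < \lambda_{\min} \leq \lambda_{\max}$ and $\kappa = \lambda_{\max}/\lambda_{\min}$ is well defined. I would then set $\rho_{\min}=\lambda_{\min}$ and $\rho_{\max}=\lambda_{\max}$ and take
\[
p_t(z) = \frac{c_t\!\left(\frac{\rho_{\max}+\rho_{\min}-2z}{\rho_{\max}-\rho_{\min}}\right)}{c_t\!\left(\frac{\rho_{\max}+\rho_{\min}}{\rho_{\max}-\rho_{\min}}\right)}
\]
from the Chebyshev construction given earlier in the chapter. A quick check shows that $p_t$ has degree $t$ and that $p_t(0)=1$ (numerator and denominator coincide when $z=0$), so $p_t$ is an admissible candidate in Theorem~\ref{thm:polynomial-roots bound for MINRES}.

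Next I would apply Theorem~\ref{thm:polynomial-roots bound for MINRES} with this choice to obtain
\[
\frac{\|r^{(t)}\|_2}{\|b\|_2} \leq \max_{i=1}^n |p_t(\lambda_i)|.
\]
Since every eigenvalue $\lambda_i$ of $A$ lies in $[\lambda_{\min},\lambda_{\max}]$, the discrete maximum on the right is dominated by the continuous supremum over that interval, and Theorem~\ref{thm:Chebyshev polynomial bounds} gives
\[
\max_{z\in[\lambda_{\min},\lambda_{\max}]} |p_t(z)| \leq 2\!\left(\frac{\sqrt{\kappa}-1}{\sqrt{\kappa}+1}\right)^{t}.
\]
Chaining these two inequalities yields the claimed bound.

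There is no real obstacle here, in the sense that no new lemma must be proved; the main things to get right are the bookkeeping items, namely verifying $p_t(0)=1$ so the polynomial is legal, and the monotonicity step that replaces a max over the finite spectrum by a supremum over the containing interval. The only conceptual caveat worth mentioning in the write-up is that the theorem, although stated for \noun{minres} (which itself does not require definiteness), implicitly needs $A$ positive definite for $\kappa$ and the Chebyshev interval to make sense.
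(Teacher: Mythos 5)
Your proposal is correct and follows essentially the same route as the paper: the paper also bounds the \noun{minres} residual by the residual of the Chebyshev polynomial on the spectrum (it just re-derives the eigendecomposition chain inline instead of citing Theorem~\ref{thm:polynomial-roots bound for MINRES} as you do) and then invokes Theorem~\ref{thm:Chebyshev polynomial bounds} on the interval $[\lambda_{\min},\lambda_{\max}]$. Your explicit checks that $p_t(0)=1$ and that the discrete maximum over the eigenvalues is dominated by the supremum over the interval are exactly the bookkeeping steps the paper's proof relies on.
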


\begin{proof}
The residual $r^{(t)}$ is the minimal residual for any $x^{(t)}\in\mathcal{K}_{t}$.
Let $\hat{r}^{(t)}$ be the Krylov-Chebyshev residual for $A$ and
$b$ and let $p_{t}$ be the Krylov-Chebyshev residual polynomial.
Let $A=V\Lambda V^{*}$ be an eigendecomposition of $A$. The theorem
follows from the following inequalities.
\begin{eqnarray*}
\left\Vert r^{(t)}\right\Vert _{2} & \leq & \left\Vert \hat{r}^{(t)}\right\Vert _{2}=\left\Vert p(A)b\right\Vert _{2}=\left\Vert Vp(\Lambda)V^{*}b\right\Vert _{2}=\left\Vert p(\Lambda)V^{*}b\right\Vert _{2}\\
 & \leq & \left\Vert p(\Lambda)V^{*}\right\Vert _{2}\left\Vert b\right\Vert _{2}=\left\Vert p(\Lambda)\right\Vert _{2}\left\Vert b\right\Vert _{2}\\
 & = & \max_{i}\left\{ \left|p\left(\lambda_{i}\right)\right|\right\} \left\Vert b\right\Vert _{2}\\
 & \leq & \max_{z\in[\lambda_{\min},\lambda_{\max}]}\left\{ \left|p\left(z\right)\right|\right\} \left\Vert b\right\Vert _{2}\\
 & \leq & 2\left(\frac{\sqrt{\kappa}-1}{\sqrt{\kappa}+1}\right)^{t}\left\Vert b\right\Vert _{2}\;.
\end{eqnarray*}
\end{proof}
Similar results can be stated for the error and residual of Conjugate
Gradients in the $A$ and $A^{-1}$ norms, respectively.

For small $\kappa$, we can expect convergence to a fixed tolerance,
say $\left\Vert r^{(t)}\right\Vert _{2}\leq10^{-12}\left\Vert r^{(t)}\right\Vert _{2}$
within a constant number of iterations. As $\kappa$ grows, 
\[
\frac{\sqrt{\kappa}-1}{\sqrt{\kappa}+1}\to1-\frac{2}{\sqrt{\kappa}}\;,
\]
so we are guaranteed convergence to a fixed tolerance within $O(\sqrt{\kappa})$
iterations.

 \setcounter{chapter}{2}

\chapter{Computing the Cholesky Factorization of Sparse Matrices}

In many support preconditioners, the preconditioner $B$ is factored
before the iterations begin. The Cholesky factorization of $B$ allows
us to efficiently solve the correction equations $Bz=r$. This chapter
explains the principles behind the factorization of sparse symmetric
positive definite matrices.

\section{The Cholesky Factorization}

We first show that the Cholesky factorization $A=LL^{T}$ of a symmetric
positive-definite (\noun{spd}) matrix $A$ always exists. 

A matrix $A$ is \emph{positive definite} if $x^{T}Ax>0$ for all
$0\neq x\in\mathbb{R}^{n}$. The same definition extends to complex
Hermitian matrices. The matrix is positive \emph{semidefinite} if
$x^{T}Ax\geq0$ for all $0\neq x\in\mathbb{R}^{n}$ and $x^{T}Ax=0$
for some $0\neq x\in\mathbb{R}^{n}$.

To prove the existence of the factorization, we use induction and
the construction shown in Chapter~XXX. If $A$ is $1$-by-$1$, then
$x^{T}Ax=A_{11}x_{1}^{2}>0$, so $A_{11}\geq0$, so it has a real
square root. We set $L_{11}=\sqrt{A_{11}}$ and we are done. We now
assume by induction that all \noun{spd} matrices of dimension $n-1$
or smaller have a Cholesky factorization. We now partition $A$ into
a $2$-by-$2$ block matrix and use the partitioning to construct
a factorization,
\[
A=\begin{bmatrix}A_{11} & A_{21}^{T}\\
A_{21} & A_{22}
\end{bmatrix}\;.
\]
Because $A$ is \noun{spd}, $A_{11}$ must also be \noun{spd}. If
it is not positive definite, then a vector $y\neq0$ such that $y^{T}A_{11}y\leq0$
can be extended with zeros to a vector $x$ such that $x^{T}Ax\leq0$.
Therfore, $A_{11}$ has a Choleskyh factor $L_{11}$ by induction.
The Cholesky factor of a nonsingular matrix must be nonsingular, so
we can define $L_{21}=A_{21}L_{11}^{-T}$ ($L_{11}^{-T}$ denotes
the inverse of $L_{11}^{T}$). The key step in the proof is to show
that the \emph{Schur complement} $A_{22}-L_{21}L_{21}^{T}=A_{22}-A_{21}A_{11}^{-1}A_{21}^{T}$
is also positive definite. Suppose for contradition that it is not,
and let $z\neq0$ be such that $z^{T}\left(A_{22}-L_{21}L_{21}^{T}\right)z\leq0$.
We define 
\[
x=\begin{bmatrix}-A_{11}^{-1}A_{21}^{T}z\\
z
\end{bmatrix}=\begin{bmatrix}w\\
z
\end{bmatrix}\;.
\]
Now we have 
\begin{eqnarray*}
x^{T}Ax & = & w^{T}A_{11}w+w^{T}A_{21}^{T}z+z^{T}A_{21}w+z^{T}A_{22}z\\
 & = & \left(A_{11}^{-1}A_{21}^{T}z\right)^{T}A_{11}\left(A_{11}^{-1}A_{21}^{T}z\right)-2z^{T}A_{21}\left(A_{11}^{-1}A_{21}^{T}z\right)w+z^{T}A_{22}z\\
 & = & z^{T}A_{21}A_{11}^{-1}A_{21}^{T}z-2z^{T}A_{21}A_{11}^{-1}A_{21}^{T}z+z^{T}A_{22}z\\
 & = & z^{T}\left(A_{22}-L_{21}L_{21}^{T}\right)z\\
 & \leq & 0\;,
\end{eqnarray*}
which is impossible, so our supposition was wrong. Because $A_{22}-L_{21}L_{21}^{T}$
is \noun{spd}, it has a Cholesky factor $L_{22}$ by induction. The
three blocks $L_{11}$, $L_{21}$, and $L_{22}$ form a Cholesky factor
for $A$, since

\begin{eqnarray*}
A_{11} & = & L_{11}L_{11}^{T}\\
A_{21} & = & L_{21}L_{11}^{T}\\
A_{22} & = & L_{21}L_{21}^{T}+L_{22}L_{22}^{T}\;.
\end{eqnarray*}

Symmetric positive semidefinite (\noun{spsd}) matrices also have a
Cholesky factorization, but in floating-point arithmetic, it is difficult
to compute a Cholesky factor that is both backward stable and has
the same rank as $A$. To see that a factorization exists, we modify
the construction as follows. If $A$ is $1$-by-$1$, then if it is
singular than it is exactly zero, in which case we can set $L=A$.
Otherwise, we partition $A$, selecting $A_{11}$to be $1$-by-$1$.
If $A_{11}\neq0$, then it is invertible and we can continue with
the same proof, except that we show that the Schur complement is semidefinite,
not definte. If $A_{11}=0$, then it is not hard to show that $A_{21}$
must be zero as well. This implies that the equation $A_{21}=L_{21}L_{11}^{T}$
is zero on both sides for any choice of $L_{21}$. We set $L_{21}=0$
and continue.

The difficulty in a floating-point implementation lies in deciding
whether a computed $A_{11}$ would be zero in exact arithmetic. In
general, even if the next diagonal element $A_{11}=0$ in exact arithmetic,
in floating-point it might be computed as a small nonzero value. Should
we round it (and $A_{21}$) to zero? Assuming that it is a nonzero
when in fact is should be treated is a zero leads to an unstable factorization.
The small magnitude of the computed $L_{11}$ can cause the elements
of $L_{21}$ to be large, which leads to large inaccuracies in the
Schur complement. On the other hand, rounding small values to zero
always leads to a backward stable factorization, since the rounding
is equivalent to a small perturbation in $A$. But rounding a column
to zero when the value in exact arithmetic is not zero causes the
rank of $L$ to be smaller than the rank of $A$. This can later cause
trouble, since some vectors $b$ that are in the range of $A$ are
not in the range of $L$. In such a case, there is no $x$ such that
$LL^{T}x=b$ even if $Ax=b$ is consistent.

\section{Work and Fill in Sparse Cholesky}

When $A$ is sparse, operations on zeros can be skipped. For example,
suppose that 
\[
A=\begin{bmatrix}2 & 0 & \cdots\\
0 & 3\\
\vdots &  & \ddots
\end{bmatrix}=\begin{bmatrix}\sqrt{2} & 0 & \cdots\\
0 & \sqrt{3}\\
\vdots &  & \ddots
\end{bmatrix}\begin{bmatrix}\sqrt{2} & 0 & \cdots\\
0 & \sqrt{3}\\
\vdots &  & \ddots
\end{bmatrix}^{T}\;.
\]
There is no need to divide $0$ by $\sqrt{2}$ to obtain $L_{2,1}$
(the element in row $2$ and column $1$; from here on, expressions
like $L_{2,1}$ denote matrix elements, not submatrices). Similarly,
there is no need to subtract $0\times0$ from the $3$, the second
diagonal element. If we represent zeros implicitly rather than explicitly,
we can avoid computations that have no effect, and we can save storage.
How many arithmetic operations do we need to perform in this case?
\begin{defn}
We define $\eta(A)$ to be the number of nonzero elements in a matrix
$A$. We define $\phi_{\mathtt{alg}}(A)$ to be the number of arithmetic
operations that some algorithm $\mathtt{alg}$ performs on an input
matrix $A$, excluding multiplications by zeros, divisions of zeros,
additions and subtractions of zeros, and taking square roots of zeros.
When the algorithm is clear from the context, we drop the subscript
in the $\phi$ notation.
\end{defn}

\begin{thm}
\label{thm:ops in sparse cholesky}Let $\mathtt{SparseChol}$ be a
sparse Cholesky factorization algorithm that does not multiply by
zeros, does not divide zeros, does not add or subtract zeros, and
does not compute square roots of zeros. Then for a symmetric positive-definite
matrix $A$ with a Cholesky factor $L$ we have
\begin{eqnarray*}
\phi_{\mathtt{SparseChol}}(A) & = & \sum_{j=1}^{n}\left(1+\eta\left(L_{j+1\colon n}\right)+\frac{\eta\left(L_{j+1\colon n}\right)\left(\eta\left(L_{j+1\colon n}\right)+1\right)}{2}\right)\\
 & = & \sum_{j=1}^{n}O\left(\eta^{2}\left(L_{j+1\colon n}\right)\right)\;.
\end{eqnarray*}
\end{thm}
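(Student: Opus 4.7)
The plan is to proceed by columns, mirroring the recursive block decomposition from Chapter~1 but choosing, at stage $j$, the leading block to be $1$-by-$1$ (so the recursion becomes an outer loop over $j = 1, \ldots, n$). At stage $j$, the current reduced matrix is the Schur complement obtained after eliminating the first $j-1$ columns; call its entries $A^{(j)}_{\cdot,\cdot}$. Applying equations~(\ref{eq:chol-11})--(\ref{eq:chol-22}) with $A_{11}$ of size $1$ tells us that stage $j$ consists of exactly three actions: compute $L_{j,j} = \sqrt{A^{(j)}_{j,j}}$, divide the column $A^{(j)}_{j+1\colon n,\, j}$ by the scalar $L_{j,j}$ to obtain $L_{j+1\colon n,\, j}$, and perform the symmetric rank-$1$ update $A^{(j+1)}_{j+1\colon n,\, j+1\colon n} = A^{(j)}_{j+1\colon n,\, j+1\colon n} - L_{j+1\colon n,\, j} L_{j+1\colon n,\, j}^{T}$.

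Next I would count nonzero-operations at stage $j$, letting $k_j := \eta(L_{j+1\colon n,\, j})$. The square root contributes $1$. For the division, the key observation is that scaling by the nonzero scalar $L_{j,j}$ preserves the zero/nonzero pattern, so the number of divisions that the sparse algorithm must actually perform equals the number of nonzero entries in $L_{j+1\colon n,\, j}$, namely $k_j$. For the rank-$1$ update, the product $L_{j+1\colon n,\, j} L_{j+1\colon n,\, j}^{T}$ has a nonzero at position $(i_1,i_2)$ if and only if both $L_{i_1,j}$ and $L_{i_2,j}$ are nonzero; by symmetry we only store and update the lower-triangular half, yielding exactly $\binom{k_j}{2} + k_j = k_j(k_j+1)/2$ update positions, each costing one multiply-subtract that the sparse algorithm cannot skip. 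Summing $1 + k_j + k_j(k_j+1)/2$ over $j = 1,\ldots,n$ gives the first line of the theorem; the second line follows because the dominant term is quadratic in $k_j$.

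The main obstacle is justifying that the sparse algorithm really touches exactly the claimed number of entries --- no more and no less. The ``no more'' direction requires showing that nonzeros in the Schur complement can only appear at positions $(i_1,i_2)$ for which both $L_{i_1,j}$ and $L_{i_2,j}$ are nonzero at the moment of the update; this is immediate from the outer-product form of the update. The ``no less'' direction is more delicate because, in principle, an update could cancel a pre-existing nonzero in $A^{(j)}$. Since the definition of $\phi_{\mathtt{alg}}$ only excludes operations on values that are syntactically zero, and since our algorithm decides whether to perform an update based on the nonzero pattern of $L_{j+1\colon n,\, j}$ rather than on the numerical value of the result, such accidental cancellations still count as operations; equivalently, the $\eta(\cdot)$ of the final factor $L$ captures exactly the positions the algorithm processes, with the standard convention (used implicitly in the statement) that the algorithm does not detect exact cancellation. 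With this convention spelled out, the counting argument goes through cleanly and the two displayed equalities follow.
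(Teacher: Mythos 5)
Your proposal is correct and follows essentially the same route as the paper: a column-by-column count in which column $j$ contributes one square root, $\eta(L_{j+1\colon n,j})$ divisions, and $\eta(L_{j+1\colon n,j})\left(\eta(L_{j+1\colon n,j})+1\right)/2$ multiply-subtract updates from the lower triangle of the symmetric rank-$1$ outer product. Your closing remark on cancellation is consistent with the paper's definition of $\phi$, which excludes an operation only when an operand is zero, so cancellations occurring in the Schur complement do not change the count.
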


\begin{proof}
Every arithmetic operation in the Cholesky factorization involves
an element or two from a column of $L$: in square roots and divisions
the output is an element of $L$, and in multiply-subtract the two
inputs that are multiplied are elements from one column of $L$. We
count the total number of operations by summing over columns of $L$. 

Let us count the operations in which elements of $L_{j\colon n,j}$
are involved. First, one square root. Second, divisions of $\eta\left(L_{j+1\colon n}\right)$
by that square root. We now need to count the operations in which
elements from this column update the Schur complement. To count these
operations, we assume that the partitioning of $A$ is into a $2$-by-$2$
block matrix, in which the first diagonal block consists of $j$ rows
and columns and the second of $n-j$. The computation of the Schur
complement is
\[
A_{j+1\colon n,j+1\colon n}-L_{j+1\colon n,1\colon j}L_{j+1\colon n,1\colon j}^{T}=A_{j+1\colon n,j+1\colon n}-\sum_{k=1}^{j}L_{j+1\colon n,k}L_{j+1\colon n,k}^{T}\;.
\]
This is the only Schur-complement computation in which $L_{j\colon n,j}$
is involved under this partitioning of $A$. It was not yet used,
because it has just been computed. It will not be used again, because
the recursive factorization of the Schur complement is self contained.
The column contributes one outer product $L_{j+1\colon n,j}L_{j+1\colon n,j}^{T}$.
This outer product contains $\eta^{2}\left(L_{j+1\colon n}\right)$
nonzero elements, but it is symmetric, so only its lower triangle
needs to be computed. For each nonzero element in this outer product,
two arithmetic operations are performed: a multiplication of two elements
of $L_{j+1\colon n}$ and a subtraction of the product from another
number. This yields the total operation count.
\end{proof}
Thus, the number of arithmetic operations is asymptotically proportional
to the sum of squares of the nonzero counts in columns of $L$. The
total number of nonzeros in $L$ is, of course, simply the sum of
the nonzero counts,
\[
\eta(L)=\sum_{j=1}^{n}\eta\left(L_{j+1\colon n}\right)\;.
\]
This relationship between the arithmetic operation count and the nonzero
counts shows two things. First, sparser factors usually (but not always)
require less work to compute. Second, a factor with balanced nonzero
counts requires less work to compute than a factor with some relatively
dense columns, even if the two factors have the same dimension and
the same total number of nonzeros.

The nonzero structure of $A$ and $L$ does not always reveal everything
about the sparsity \emph{during} the factorization. Consider the following
matrix and its Cholesky factor,
\[
A=\begin{bmatrix}4 &  & 2 & 2\\
 & 4 & 2 & -2\\
2 & 2 & 6\\
2 & -2 &  & 6
\end{bmatrix}=\begin{bmatrix}2\\
 & 2\\
1 & 1 & 2\\
1 & -1 &  & 2
\end{bmatrix}\begin{bmatrix}2\\
 & 2\\
1 & 1 & 2\\
1 & -1 &  & 2
\end{bmatrix}^{T}\;.
\]
The element in position $4,3$ is zero in $A$ and in $L$, but it
might fill in one of the Schur complements. If we partition $A$ into
two $2$-by-$2$ blocks, this element never fills, since
\begin{eqnarray*}
A_{3\colon4,3\colon4}-L_{3\colon4,1\colon2}L_{3\colon4,1\colon2}^{T} & = & \begin{bmatrix}6\\
 & 6
\end{bmatrix}-\begin{bmatrix}1 & 1\\
1 & -1
\end{bmatrix}\begin{bmatrix}1 & 1\\
1 & -1
\end{bmatrix}\\
 & = & \begin{bmatrix}6\\
 & 6
\end{bmatrix}-\begin{bmatrix}2 & 0\\
0 & 2
\end{bmatrix}\\
 & = & \begin{bmatrix}4\\
 & 4
\end{bmatrix}\;.
\end{eqnarray*}
However, if we first partition $A$ into a $1$-by-$1$ block and
a $3$-by-$3$ block, then the $4,3$ element fills in the Schur complement,
\begin{eqnarray*}
A_{2\colon4,2\colon4}-L_{2\colon4,1}L_{2\colon4,1}^{T} & = & \begin{bmatrix}4 & 2 & -2\\
2 & 6\\
-2 &  & 6
\end{bmatrix}-\begin{bmatrix}0\\
1\\
1
\end{bmatrix}\begin{bmatrix}0 & 1 & 1\end{bmatrix}\\
 & = & \begin{bmatrix}4 & 2 & -2\\
2 & 6\\
-2 &  & 6
\end{bmatrix}-\begin{bmatrix}0 & 0 & 0\\
0 & 1 & 1\\
0 & 1 & 1
\end{bmatrix}\\
 & = & \begin{bmatrix}4 & 2 & -2\\
2 & 5 & -1\\
-2 & -1 & 5
\end{bmatrix}\;.
\end{eqnarray*}
When we continue the factorization, the $4,3$ element in the Schur
complement must cancel exactly by a subsequent subtraction, because
we know it is zero in $L$ (The Cholesky factor is unique). This example
shows that an element can fill in some of the Schur complements, even
if it zero in $L$. Clearly, even an position that is not zero in
$A$ can become zero in $L$ due to similar cancelation. For some
analyses, it helps to define fill in a way that accounts for this
possibility.
\begin{defn}
\label{def:fill}A \emph{fill} in a sparse Cholesky factorization
is a row-column pair $i,j$ such that $A_{i,j}=0$ and which fills
in at least one Schur complement. That is, $A_{i,j}=0$ and $A_{i,j}-L_{i,1\colon k}L_{1\colon k,j}^{T}\neq0$
for some $k<j$.
\end{defn}

\section{An Efficient Implemention of Sparse Cholesky}

To fully exploit sparsity, we need to store the matrix and the factor
in a data structure in which effect-free operations on zeros incur
no computational cost at all. Testing values to determine whether
they are zero before performing an arithmetic operation is a bad idea:
the test takes time even if the value is zero (on most processor,
a test like this takes more time than an arithmetic operation). The
data structure should allow the algorithm to implicitly skip zeros.
Such a data structure increases the cost of arithmetic on nonzeros.
Our goal is to show that in spite of the overhead of manipulating
the sparse data structure, the total number of operations in the factorization
can be kept proportional to the number of arithmetic operations.

Another importan goal in the design of the data structure is memory
efficiency. We would like the total amount of memory required for
the sparse-matrix data structure to be proportional to the number
of nonzeros that it stores.

Before we can present a data structure that achieves these goals,
we need to reorder the computations that the Cholesky factorization
perform. The reordered variant that we present is called \emph{column-oriented}
Cholesky. In the framework of our recursive formulations, this variant
is based on repartitioning the matrix after the elimination of each
column. We begin with a partitioning of $A$ into the first row and
column and the rest of the matrix,
\[
A=\begin{bmatrix}A_{1,1} & A_{2\colon n,1}^{T}\\
A_{2\colon n,1} & A_{2\colon n,2\colon n}
\end{bmatrix}\;.
\]
We compute $L_{1,1}=\sqrt{A_{1,1}}$ and divide $A_{2\colon n,1}$
to by the root to obtain $L_{2\colon n,1}$. Now comes the key step.
Instead of computing all the Schur complement $A_{2\colon n,2\colon n}-L_{2\colon n,1}L_{2\colon n,1}^{T}$,
we compute only its first column, $A_{2\colon n,2}-L_{2\colon n,1}L_{2,1}^{T}$.
The first column of the Schur complement allows us to compute the
second column of $L$. At this point we have computed the first two
columns of $L$ and nothing else. We now view the partitioning of
$A$ as 
\[
A=\begin{bmatrix}A_{1:2,1:2} & A_{3\colon n,1:2}^{T}\\
A_{3\colon n,1:2} & A_{3\colon n,3\colon n}
\end{bmatrix}=\begin{bmatrix}L_{1:2,1:2}\\
L_{3\colon n,1:2} & L_{3\colon n,3\colon n}
\end{bmatrix}\begin{bmatrix}L_{1:2,1:2}\\
L_{3\colon n,1:2} & L_{3\colon n,3\colon n}
\end{bmatrix}^{T}\;.
\]
We have already computed $L_{1:2,1:2}$ and $L_{3\colon n,1:2}$.
We still need to compute $L_{3\colon n,3\colon n}$. We do so in the
same way: computing the first column of the Schur complement $A_{3\colon n,3}-L_{3\colon n,1:2}L_{3,1\colon2}^{T}$
and eliminating it. The algorithm, ignoring sparsity issues, is shown
in Figure~XXX.

\begin{figure}
\begin{raggedright}
for $j=1\colon n$
\par\end{raggedright}
\begin{raggedright}
$\quad$$S_{j\colon n}=A_{j\colon n,j}-L_{j\colon n,1:j-1}L_{j,1\colon j-1}^{T}$
\par\end{raggedright}
\begin{raggedright}
$\quad$$L_{j,j}=\sqrt{S_{j}}$
\par\end{raggedright}
\begin{raggedright}
$\quad$$L_{j+1\colon n,j}=S_{j+1\colon n}/L_{j,j}$
\par\end{raggedright}
\begin{raggedright}
end
\par\end{raggedright}
\begin{lyxcode}
\end{lyxcode}
\caption{\label{fig:column Cholesky}Column-oriented Cholesky. The vector $S$
is a temporary vector that stores a column of the Schur complement.
By definition, operations involving a range $i\colon j$ of rows or
columns for $i<1$ or $j>n$ are not performed at all. (This allows
us to drop the conditional that skips the computation of the Schur-complement
column for $j=1$ and the computation of the nonexisting subdiagonal
part of the last column of $L$; in an actual algorithm, these conditionals
would be present, or the processing of $j=1$ and $j=n$ would be
performed outside the loop.)}
\end{figure}

We now present a data structure for the efficient implementation of
sparse column-oriented Cholesky. Our only objective in presenting
this data structure is show that sparse Cholesky can be implemented
in time proportional to arithmetic operations and in space proportional
to the number of nonzeros in $A$ and $L$. The data structure that
we present is not the best possible. Using more sophisticated data
structures, the number of operations in the factorization and the
space required can be reduced further, but not asymptotically.

We store $A$ using an array of $n$ linked lists. Each linked list
stores the diagonal and subdiagonal nonzeros in one column of $A$.
Each structure in the linked-list stores the value of one nonzero
and its row index. There is no need to store the elements of $A$
above its main diagonal, because (1) $A$ is symmetric, and (2) column-oriented
Cholesky never references them. There is also no need to store the
column indices in the linked-list structures, because each list stores
elements from only one column. The linked lists are ordered from low
row indices to high row indices.

We store the already-computed columns of $L$ redundantly. One part
of the data structure, which we refer to as \texttt{columns}, stores
the columns of $L$ in exactly the same way we store $A$. The contents
of the two other parts of the data structure, called \texttt{cursors}
and \texttt{rows}, depend on the number of columns that have already
been computed. Immediately before step $j$ begins, these parts contains
the following data. \texttt{Cursors} is an array of $n$ pointers.
The first $j-1$ pointers, if set, point to linked-list elements of
\texttt{columns}. $\mathtt{Cursors}_{i}$ points to the first element
with row index larger than $j$ in $\mathtt{columns}_{i}$, if there
is such an element. If not, it is not set (that is, it contains a
special invalid value). The other $n-j+1$ elements of \texttt{cursors}
are not yet used. Like \texttt{columns}, \texttt{rows} is an array
of linked list. The $i$th list stores the elements of $L_{i,1\colon j-1}$,
but in reverse order, from high to low column indices. Each element
in such a list contains a nonzero value and its column index.

The column $S$ of the Schur complement is represented by a data structure
\texttt{s} called a \emph{sparse accumulator}. This data structure
consists of an array \texttt{s.values} of $n$ real or complex numbers,
an array \texttt{s.rowind} of $n$ row indices, and array \texttt{s.exists}
of booleans, and an integer \texttt{s.nnz} that specifies how many
rows indices are actually stored in \texttt{s.rowind}.

Here is how we use these data structures to compute the factorization.
Step $j$ begins by copying $A_{j\colon n,j}$ to \texttt{s}. To do
so, we start by setting \texttt{s.nnz} to zero. We then traverse the
list that represents $A_{j\colon n,j}$. For a list element that represents
$A_{i,j}$, we increment \texttt{s.nnz}, store $i$ in $\mathtt{s.rowind_{s.nnz}}$
store $A_{ij}$ in $\mathtt{s.values}_{i}$, and set $\mathtt{s.exists}_{i}$
to a true value.

Our next task is to subtract $L_{j\colon n,1:j-1}L_{j,1\colon j-1}^{T}$
from $S$. We traverse $\mathtt{rows}_{j}$ to find the nonzero elements
in $L_{j,1\colon j-1}^{T}$. For each nonzero $L_{j,k}$ that we find,
we subtract $L_{j\colon n,k}L_{k,j}^{T}=L_{j,k}L_{j\colon n,k}$ from
$S$. To subtract, we traverse $\mathtt{columns}_{k}$ starting from
$\mathtt{cursors}_{k}$. Let $L_{i,k}$ be a nonzero found during
this traversal. To subtract $L_{j,k}L_{i,k}$ from $S_{i}$, we first
check whether $\mathtt{s.exists}_{i}$ is true. If it is, we simply
subtract $L_{j,k}L_{i,k}$ from $\mathtt{s.values}_{i}$. If not,
then $S_{i}$ was zero until now (in step $j$). We increment \texttt{s.nnz},
store $i$ in $\mathtt{s.rowind_{s.nnz}}$ store $0-L_{j,k}L_{i,k}$
in $\mathtt{s.values}_{i}$, and set $\mathtt{s.exists}_{i}$ to a
true. During the traversal of $\mathtt{columns}_{k}$, we many need
to advance $\mathtt{cursors}_{k}$ to prepare it for subsequent steps
of the algorithm. If the first element that we find in the travelral
has row index $j$, we advance $\mathtt{cursors}_{k}$ to the next
element in $\mathtt{columns}_{k}$. Otherwise, we do not modify $\mathtt{columns}_{k}$.

Finally, we compute $L_{j\colon n,j}$ and insert it into the nonzero
data structure that represents $L$. We replace $\mathtt{s.values}_{j}$
by its square root. For each rows index $i\neq j$ stored in one of
the first $\mathtt{s.nnz}$ entries of $\mathtt{s.rowind}$, we divide
$\mathtt{s.values}_{i}$ by $\mathtt{s.values}_{j}$. We now sort
elements $1$ though $\mathtt{s.nnz}$ of $\mathtt{s.rowind}$, to
ensure that the elements $\mathtt{columns}_{j}$ are linked in ascending
row order. We traverse the row indices stored in $\mathtt{s.rowind}$.
For each index $i$ such that $\mathtt{s.values}_{i}\neq0$, we allocate
a new element for $\mathtt{columns}_{j}$, link it to the previous
element that we created, and store in it $i$ and $\mathtt{s.values}_{i}$.
We also set $\mathtt{s.exists}_{i}$ to false, to prepare it for the
next iteration. 

We now analyze the number of operations and the storage requirements
of this algorithm.
\begin{thm}
\label{thm:complexity of sparse chol algorithms}The total number
of operations that the sparse-cholesky algorithm described above performs
is $\Theta(\phi(A))$. The amount of storage that the algorithm uses,
including the representation of its input and output, is $\Theta(n+\eta(A)+\eta(A))$.
\end{thm}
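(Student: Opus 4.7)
The plan is to prove the operation-count and storage bounds separately, each with matching upper and lower bounds. The $\Omega(\phi(A))$ lower bound on operations is immediate, since every arithmetic operation counted by $\phi(A)$ is actually performed and costs at least one data-structure operation. The $\Omega(n+\eta(A)+\eta(L))$ lower bound on storage is also immediate, because the algorithm must hold its input and its output. (I read the stated $\eta(A)+\eta(A)$ as a typo for $\eta(A)+\eta(L)$.) So both claims reduce to the corresponding upper bounds.

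For the operation upper bound, I would partition the work performed in step $j$ into: (i) copying $A_{j\colon n,j}$ into the sparse accumulator, costing $O(\eta(A_{j\colon n,j}))$; (ii) traversing $\mathtt{rows}_{j}$, costing $O(\eta(L_{j,1\colon j-1}))$; (iii) performing the updates, i.e.\ for each $k$ with $L_{j,k}\neq 0$ walking $\mathtt{columns}_{k}$ from $\mathtt{cursors}_{k}$ to its end and doing $O(1)$ work per element, for a total of $O\!\left(\sum_{k\,:\,L_{j,k}\neq 0}\eta(L_{j\colon n,k})\right)$; (iv) producing $L_{j\colon n,j}$ (square root, divisions, sorting $\mathtt{s.rowind}$, emitting into $\mathtt{columns}_{j}$, resetting $\mathtt{s.exists}$), costing $O(\mathtt{s.nnz})$ plus the sort; and (v) prepending the new entries to the relevant $\mathtt{rows}_{i}$ and installing $\mathtt{cursors}_{j}$, costing $O(\eta(L_{j\colon n,j}))$. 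Step (iii) is the dominant one and matches the multiply-subtract count up to a constant. Then, reusing the column-by-column bookkeeping from the proof of Theorem~\ref{thm:ops in sparse cholesky}, summing the update cost $\sum_{k\,:\,L_{j,k}\neq 0}\eta(L_{j\colon n,k})$ over $j$ equals $\sum_{k}\binom{\eta(L_{k+1\colon n,k})+1}{2}=\Theta(\phi(A))$. Terms (i), (ii), and (v) are each bounded above by (iii) (plus the input-copy cost, which is absorbed into $\eta(L)$ since $A$'s subdiagonal nonzero pattern is a subset of $L$'s).

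The main obstacle will be step (iv), and in particular the sort of $\mathtt{s.rowind}$: a generic comparison sort costs $O(\mathtt{s.nnz}\log\mathtt{s.nnz})$ per column, which does not cleanly match $\Theta(\phi(A))$. I would handle this by either (a) using a radix sort over row indices in $[j,n]$ with a reusable workspace whose initialization cost is amortized across columns, or (b) avoiding the sort altogether by merging the (already row-sorted) traversals of the contributing $\mathtt{columns}_{k}$ into the accumulator in sorted order using an $O(\log)$-depth priority queue keyed by column $k$. In either case, since each accumulator entry is produced either by the copy from $A$ or by at least one multiply-subtract, the per-column non-arithmetic cost is within a constant factor of the per-column arithmetic cost, so the global total remains $\Theta(\phi(A))$.

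For storage, I would account for each component separately: the linked-list representation of $A$ uses $O(\eta(A))$; $\mathtt{columns}$ and $\mathtt{rows}$ each store one record per nonzero of $L$, contributing $O(\eta(L))$; $\mathtt{cursors}$ uses $O(n)$; and the sparse accumulator's three fixed-length arrays use $O(n)$. Summing gives $O(n+\eta(A)+\eta(L))$, matching the lower bound and completing the proof.
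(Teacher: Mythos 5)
Your overall strategy --- charge every data-structure operation to an arithmetic operation and sum column by column, then account for storage component by component --- is exactly the paper's, and your items (i)--(iii), (v) and the storage accounting go through as you describe (including reading the stated bound as $\Theta(n+\eta(A)+\eta(L))$). Two points deserve comment. First, the sort in step (iv) is not the obstacle you fear, and you do not need to modify the algorithm to handle it: Theorem~\ref{thm:ops in sparse cholesky} gives column $j$ an arithmetic budget of order $1+\eta^{2}(L_{j+1\colon n,j})$, so even a \emph{quadratic} sort of the accumulator's indices is absorbed into that column's own budget; this is how the paper disposes of the issue in one sentence. Your radix-sort and priority-queue alternatives are legitimate implementations, but they prove the theorem for a different algorithm than ``the algorithm described above,'' and they are unnecessary. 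Second, your justification for absorbing the copy cost --- that $A$'s subdiagonal nonzero pattern is a subset of $L$'s --- is false in general, because an entry that is nonzero in $A$ can cancel exactly and be zero in $L$ (the paper makes this point explicitly in the discussion leading to Definition~\ref{def:fill}). The repair is the charging scheme the paper uses: a copied entry that is later modified in the accumulator is charged to that multiply-subtract (and a cancellation can only happen if such a multiply-subtract occurs), while a copied entry that is never modified survives into $L_{j\colon n,j}$ and is charged to its division by $L_{j,j}$ or to the square root. With these two adjustments your argument coincides with the paper's proof.
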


\begin{proof}
Let's start with bounding work. Step $j$ starts with copying column
$j$ of $A$ into the sparse accumulator, at a total cost of $\Theta(1+\eta(A_{j\colon n,j}))$.
No aritmetic is performed, but we can charge these operations to subsequent
arithmetic operations. If one of these values is modified in the accumulator,
we charge the copying to the subsequent multiply-subtract. If not,
we charge it to either the square root of the diagonal element or
to the division of subdiagonal elements. We cannot charge all the
copying operations to roots and divisions, becuase some of the copied
elements might get canceled before they are divided by $L_{j,j}$. 

The traversal of $\mathtt{rows}_{j}$ and the subtractions of scaled
columns of $L$ from the accumulator are easy to account for. The
processing of an element of $\mathtt{rows}_{j}$ is charged to the
modification of the diagonal, $S_{j}=S_{j}-L_{j,k}^{2}$. The traversal
of the suffix of $\mathtt{columns}_{k}$ performs $2\eta(L_{j\colon n,k})$
arithmetic operations and$\Theta(\eta(L_{j\colon n,k}))$ non-arithmetic
operations, so all operations are accounted for. 

After the column of the Schur complement is computed, the algorithm
computes a square root, scales $\eta(L_{j\colon n,j})-1$ elements,
sorts the indices of $L_{j\colon n,j}$, and creates a linked-list
to hold the elements of $L_{j\colon n,j}$. The total number of operations
in these computations is clearly $O(\eta^{2}(L_{j+1\colon n,j}))$,
even if we use a quadratic sorting algorithm, so by using Theorem~\ref{thm:ops in sparse cholesky},
we conclude that the operation-count bound holds.

Bounding space is easier. Our data structure includes a few arrays
of length $n$ and linked lists. Every linked-list element represents
a nonzero in $A$ or in $L$, and every nonzero is represented by
at most two linked-list elements. Therefore, the total storage requirements
is $\Theta(n+\eta(A)+\eta(A))$.
\end{proof}
The theorem shows that the only way to asymptotically reduce the total
number of operations in a sparse factorization is to reduce the number
of arithmetic operations. Similarly, to asymptotically reduce the
total storage we must reduce the fill. There are many ways to optimize
and improve both the data structure and the algorithm that we described,
but these optimizations can reduce the number of operations and the
storage requirements only by a constant multiplicative constant.

Improvements in this algorithm range from simple data-structure optimizations,
through sophisticated preprocessing steps, to radical changes in the
representation of the Schur complement. The most common data-structure
optimization, which is used by many sparse factorization codes, is
the use of \emph{compressed-column storage}. In this data structure,
all the elements in all the $\mathtt{columns}$ list are stored in
two contiguous arrays, one for the actual numerical values and another
for the row indices. A integer third array of size $n+1$ points to
the beginning of each column in these arrays (and to the location
just past column $n$). Preprocessing steps can upper bound the number
of nonzeros in each column of $L$ (this is necessary for exact preallocation
of the compressed-column arrays) and the identity of potential nonzeros.
The prediction of fill in $L$ can can eliminate the conditional in
the inner loop that updates the sparse accumulator; this can significantly
speed up the computation. The preprocessing steps can also construct
a compact data structured called the \emph{elimination tree} of $A$
that can be used for determining the nonzero structure of a row of
$L$ without maintaining the $\mathtt{rows}$ lists. This also speeds
up the computation significantly. The elimination tree has many other
uses. Finally, \emph{multifrontal} factorization algorithms maintain
the Schur complement in a completely different way, as a sum of sparse
symmetric update matrices.

The number of basic computational operations in an algorithm is not
an accurate predictor of its running time. Different algorithms have
different mixtures of operations and different memory-access patterns,
and these factors affect running times, not only the total number
of operations. We have seen evidence for this in Chapter~XXX, were
we have seen cases where direct solvers run at much higher computational
rates than iterative solvers. But to develop fundamental improvements
in algorithms, it helps to focus mainly on operation counts, and in
particular, on asymptotic operation counts. Once new algorithms are
discovered, we can and should optimize them both in terms of operation
counts and in terms of the ability to exploit cache memories, multiple
processors, and other architectural features of modern computers.
But our primary concern here is asymptotic operation counts.

\section{Characterizations of Fill}

If we want to reduce fill, we need to characterize fill. Definition~\ref{def:fill}
provides an characterization, but this characterization is not useful
for predicting fill before it occurs. One reason that predicting fill
using Definition~\ref{def:fill} is that it for cancellations, which
are difficult to predict. In this section we provide two other characterizations.
They are not exact, in the sense that they characterize a superset
of the fill. In many cases these characterizations are exact, but
not always. On the other hand, these charactersizations can be used
to predict fill before the factorization begins. Both of them are
given in terms of graphs that are related to $A$, not in terms of
matrices.
\begin{defn}
The \emph{pattern graph} of an $n$-by-$n$ symmetric matrix $A$
is an undirected graph $G_{A}=(\{1,2,\ldots n\},E)$ whose vertices
are the integers $1$ through $n$ and whose edges are pairs $(i,j)$
such that $A_{i,j}\neq0$. 
\end{defn}

\begin{defn}
Let $G$ be an undirected graph with vertices $1,2,\ldots,n$. A \emph{vertex
elimination step} on vertex $j$ of $G$ is the transformation of
$G$ into another undirected graph $\textrm{eliminate}(G,j)$. The
edge set of $\textrm{eliminate}(G,j)$ is the union of the edge set
of $G$ with a clique on the neighbors of $j$ in $G$ whose indices
are larger than $j$,
\begin{eqnarray*}
E(\textrm{eliminate}(G,j))=E(G)\cup\{(i,k) & | & i>j\\
 &  & k>j\\
 &  & (j,i)\in E(G)\\
 &  & (k,i)\in E(G)\}\;.
\end{eqnarray*}
The \emph{fill graph} $G_{A}^{+}$ of $A$ is the graph
\[
G_{A}^{+}=\textrm{eliminate}(\textrm{eliminate}(\cdots\textrm{eliminate}(G_{A},1)\cdots),n-1),n)\;.
\]
\end{defn}

The edges of the fill graph provide a bound on fill
\begin{lem}
\label{lemma: elimination game}Let $A$ be an $n$-by-$n$ symmetric
positive-definite matrix. Let $1\leq j<i\leq n$. If $A_{i,j}\neq0$
or $i,j$ is a fill element, then $(i,j)$ is an edge of $G_{A}^{+}$.
\end{lem}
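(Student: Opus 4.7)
The plan is to argue by induction on the elimination step, tracking the Schur complements alongside the graphs produced by the elimination game. Write $G^{(0)}=G_A$ and $G^{(k)}=\textrm{eliminate}(G^{(k-1)},k)$, and let $S^{(k)}$ denote the Schur complement after $k$ steps, so $S^{(k)}_{i,j}=A_{i,j}-\sum_{\ell=1}^{k}L_{i,\ell}L_{j,\ell}$ for $i,j>k$. I will prove the invariant: for every $0\le k<n$ and every $i>j>k$, if $S^{(k)}_{i,j}\neq 0$ then $(i,j)\in E(G^{(k)})$.

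The base case $k=0$ is immediate because $S^{(0)}=A$ and $(i,j)\in E(G_A)$ exactly when $A_{i,j}\neq 0$. For the inductive step I would use the standard one-step update
$$S^{(k)}_{i,j}=S^{(k-1)}_{i,j}-\frac{S^{(k-1)}_{i,k}\,S^{(k-1)}_{j,k}}{S^{(k-1)}_{k,k}},$$
which is valid because Schur complements of SPD matrices are SPD, so $S^{(k-1)}_{k,k}>0$. If $S^{(k)}_{i,j}\neq 0$, then at least one of the two terms on the right is nonzero. If $S^{(k-1)}_{i,j}\neq 0$, the inductive hypothesis gives $(i,j)\in E(G^{(k-1)})\subseteq E(G^{(k)})$, since elimination only adds edges. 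Otherwise the correction term is nonzero, which forces both $S^{(k-1)}_{i,k}\neq 0$ and $S^{(k-1)}_{j,k}\neq 0$; by induction $(i,k)$ and $(j,k)$ are both edges of $G^{(k-1)}$, and since $i,j>k$, the elimination rule inserts $(i,j)$ into $G^{(k)}$.

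To finish the proof I would simply unpack the hypothesis of the lemma. If $A_{i,j}\neq 0$, then $S^{(0)}_{i,j}\neq 0$ and the $k=0$ case of the invariant gives $(i,j)\in E(G_A)\subseteq E(G_A^+)$. If $(i,j)$ is a fill element, then Definition~\ref{def:fill} supplies some $k<j$ with $S^{(k)}_{i,j}\neq 0$, and the invariant yields $(i,j)\in E(G^{(k)})\subseteq E(G_A^+)$.

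There is no substantial obstacle here; the whole proof is essentially the observation that the Schur-complement update formula and the elimination-game rule are in lockstep. The only mild point to be careful about is that the nonzero correction term requires \emph{both} $S^{(k-1)}_{i,k}$ and $S^{(k-1)}_{j,k}$ to be nonzero (a product vanishes if either factor does), which is precisely the condition the elimination rule uses to add the edge $(i,j)$ when vertex $k$ is processed. Positive-definiteness is used only to guarantee that the pivots $S^{(k-1)}_{k,k}$ are nonzero, so the recurrence makes sense.
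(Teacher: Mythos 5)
Your proof is correct and follows essentially the same route as the paper's: both arguments reduce a nonzero Schur-complement entry to either an original nonzero of $A$ or a nonzero product involving the pivot column, and then invoke the elimination rule inductively. The only difference is organizational — you induct on the elimination step $k$ with an invariant over all entries of $S^{(k)}$, while the paper inducts on the column index $j$ of the fill element — but the key observation is identical.
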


\begin{proof}
The elimination of a vertex only adds edges to the graph. The fill
graph is produced by a chain of vertex eliminations, starting from
the graph of $A$. If $A_{i,j}\neq0$, then $(i,j)$ is an edge of
the graph of $A$, and therefore also an edge of its fill graph.

We now prove by induction on $j$ that a fill element $i,j$ is also
an edge $(i,j)$ in 
\[
\textrm{eliminate}(\textrm{eliminate}(\cdots\textrm{eliminate}(G_{A},1)\cdots),j-2),j-1)\;.
\]
The claim hold for $j=1$ because $L_{i,1}\neq0$ if and only if $A_{i,1}\neq0$.
Therefore, there are no fill edges for $j=1$, so the claims holds.
We now prove the claim for $j>1$. Assume that $i,j$ is a fill element,
and let $S^{(j)}$ be the Schur complement just prior to the $j$th
elimination step,
\[
S^{(j)}=A_{j\colon n,j\colon n}-L_{j\colon n,1\colon j-1}L_{j\colon n,1\colon j-1}^{T}=A_{j\colon n,j\colon n}-\sum_{k=1}^{j-1}L_{j\colon n,k}L_{j\colon n,k}^{T}\;.
\]
Since $i,j$ is a fill element, $S_{i,j}^{(j)}\neq0$ but $A_{i,j}=0$.
Therefore, $L_{i,k}L_{j,k}\neq0$ for some $k<j$. This implies that
$L_{i,k}\neq0$ and $L_{j,k}\neq0$. By induction, $(i,k)$ and $(j,k)$
are edges of 
\[
\textrm{eliminate}(\cdots\textrm{eliminate}(G_{A},1)\cdots),k-1)\;.
\]
Therefore, $(i,j)$ is an edge of 
\[
\textrm{eliminate}(\textrm{eliminate}(\cdots\textrm{eliminate}(G_{A},1)\cdots),k-1),k)\;.
\]
Since edges are never removed by vertex eliminations, $(i,j)$ is
also an edge of 
\[
\textrm{eliminate}(\textrm{eliminate}(\cdots\textrm{eliminate}(G_{A},1)\cdots),j-2),j-1)\;.
\]
This proves the claim and the entire lemma.
\end{proof}
The converse is not true. An element in a Schur complement can fill
and then be canceled out, but the edge in the fill graph remains.
Also, fill in a Schur complement can cancel exactly an original element
of $A$, but the fill graph still contains the corresponding edge.

The following lemma provides another characterization of fill.
\begin{lem}
\label{lemma: fill paths}The pair $(i,j)$ is an edge of the fill
graph of $A$ if and only if $G_{A}$ contains a simple path from
$i$ to $j$ whose internal vectices all have indices smaller than
$\min(i,j)$.
\end{lem}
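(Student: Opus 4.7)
The plan is to prove both implications by induction on $k$, using the sequence of intermediate graphs $G_A^{(k)} := \textrm{eliminate}(\cdots\textrm{eliminate}(G_A,1)\cdots,k)$, with $G_A^{(0)} = G_A$ and $G_A^{(n)} = G_A^+$. The inductive claim I would carry strengthens the lemma: for every $0 \le k \le n$ and every pair $i \ne j$ with $i, j > k$, the edge $(i,j)$ lies in $E(G_A^{(k)})$ if and only if $G_A$ contains a simple path from $i$ to $j$ whose internal vertices all lie in $\{1,\ldots,k\}$. Once this is established, the lemma follows by setting $k = \min(i,j) - 1$ and observing that eliminating any vertex $v \ge \min(i,j)$ introduces only edges between vertices with indices strictly greater than $v$, so $(i,j) \in E(G_A^+)$ iff $(i,j) \in E(G_A^{(\min(i,j)-1)})$. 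The base case $k = 0$ is immediate since the only admissible paths have length one.

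For the direction from paths to fill at step $k$, I would distinguish whether vertex $k$ lies on the given simple path $P$. If $k \notin V(P)$, the internal vertices of $P$ already lie in $\{1,\ldots,k-1\}$ and induction yields $(i,j) \in E(G_A^{(k-1)}) \subseteq E(G_A^{(k)})$ directly. If $k \in V(P)$, the assumption $i, j > k$ together with simpleness of $P$ forces $k$ to be an internal vertex, and I would split $P$ at $k$ into subpaths $P_1$ from $i$ to $k$ and $P_2$ from $k$ to $j$; each of these has internal vertices in $\{1,\ldots,k-1\}$, since they are internal to $P$ and distinct from $k$ by simpleness. Applying the inductive hypothesis to $P_1$ and $P_2$ (whose endpoints both exceed $k-1$) yields $(i,k), (k,j) \in E(G_A^{(k-1)})$, and then the definition of $\textrm{eliminate}(\cdot,k)$ places $(i,j)$ in $E(G_A^{(k)})$ because $i, j > k$.

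For the reverse direction, suppose $(i,j) \in E(G_A^{(k)})$ with $i, j > k$. If $(i,j)$ was already in $E(G_A^{(k-1)})$, induction supplies a path with internal vertices in $\{1,\ldots,k-1\} \subseteq \{1,\ldots,k\}$. Otherwise the edge $(i,j)$ must have been created when vertex $k$ was eliminated, which forces $(i,k), (k,j) \in E(G_A^{(k-1)})$; induction then produces simple paths $P_1$ from $i$ to $k$ and $P_2$ from $k$ to $j$ in $G_A$, each with internal vertices in $\{1,\ldots,k-1\}$. The main obstacle is that the concatenation $P_1 \cdot P_2$ is only guaranteed to be a walk, since $P_1$ and $P_2$ can share vertices, and the lemma requires a simple path. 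I would handle this by the standard shortcutting argument: whenever a vertex is repeated along the walk, excise the cycle between the two occurrences. Shortcutting only removes vertices, so the vertex set of the resulting simple path is a subset of the original walk, and in particular its internal vertices still lie in $\{1,\ldots,k\}$. This completes the induction and hence the lemma.
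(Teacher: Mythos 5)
Your proof is correct, and its combinatorial core is the same as the paper's: the smallest-indexed vertex on a path is where the path gets contracted into a fill edge, and a created edge is witnessed by two edges into the eliminated vertex. What you do differently is the bookkeeping, and it is an improvement. The paper proves the forward direction by induction on path length and the reverse direction by an informal recursive unwinding (``we use a similar argument''), whereas you run a single induction on the elimination step $k$ with the strengthened invariant that $(i,j)\in E(G_A^{(k)})$ for $i,j>k$ iff there is a simple path with internal vertices in $\{1,\ldots,k\}$; this makes both directions fall out of one well-founded induction and makes explicit why only the first $\min(i,j)-1$ eliminations matter. More importantly, you identify and close a genuine gap in the paper's argument: the concatenation of the two simple paths $P_1$ and $P_2$ is in general only a walk, and the paper asserts without justification that it is ``the sought after path.'' Your shortcutting step, together with the observation that excising cycles only shrinks the vertex set and so preserves the bound on internal indices, is exactly what is needed to finish the reverse direction rigorously.
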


\begin{proof}
Suppose that $G_{A}$ contains such a path. We prove that $(i,j)$
edge of the fill graph by induction on the length of the path. If
path contains only one edge then $(i,j)$ is an edge of $G_{A}$,
so it is also an edge of $G_{A}^{+}$. Suppose that the claim holds
for paths of length $\ell-1$ or shorter and that a path of length
$\ell$ connects $i$ and $j$. Let $k$ be the smallest-index vertex
in the path. The elimination of vertex $k$ adds an edge connecting
its neighbors in the path, because their indices are higher than $k$.
Now there is a path of length $\ell-1$ between $i$ and $j$; the
internal vertices still have indices smaller than $\min(i,j)$. By
induction, future elimination operations on the graph will create
the fill edge $(i,j)$.

To prove the other direction, suppose that $(i,j)$ is an edge of
$G_{A}^{+}$. If $(i,j)$ is an edge of $G_{A}$, a path exists. If
not, the edge $(i,j)$ is created by some elimination step. Let $k$
be the vertex whose elimination creates this edge. We must have $k<\min(i,j)$,
otherwise the $k$th elimination step does not create the edge. Furthermore,
when $k$ is eliminated, the edges $(k,i)$ and $(k,j)$ exist. If
they are original edge of $G_{A}$, we are done---we have found the
path. If not, we use a similar argument to show that there must be
suitable paths from $i$ to $k$ and from $k$ to $j$. The concatenation
of these paths is the sought after path.
\end{proof}

\section{Perfect and Almost-Perfect Elimination Orderings}

The Cholesky factorization of symmetric positive definite matrices
is numerically stable, and symmetrically permuting the rows and columns
of an \noun{spd} matrix yields another \noun{spd} matrix. Therefore,
we can try to symmetrically permute the rows and columns of a sparse
matrix to reduce fill and work in the factorization. We do not have
to worry that the permutation will numerically destabilize the factorization.
In terms of the graph of the matrix, a symmetric row and column pemutation
corresponds to relabeling the vertices of the graphs. In otherwords,
given an undirected graph we seek an elimination ordering for its
vertices.

Some graphs have elimination orderings that generate no fill, so that
$G_{A}^{+}=G_{A}$. Such orderings are called \emph{perfect-elimination
orderings}. The most common example are trees.
\begin{lem}
\label{lemma: perfect orderings for trees}If $G_{A}$ is a tree or
a forest, then $G_{A}^{+}=G_{A}$.
\end{lem}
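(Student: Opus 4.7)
The plan is to exhibit an explicit labeling (elimination ordering) of the vertices of the forest $G_A$ under which the elimination game introduces no new edges, so that $G_A^+=G_A$. The ordering is constructed by repeatedly peeling off leaves: pick any leaf $v$ of the current forest and assign it the smallest unused label, then delete $v$ and recurse on what remains. Since the deletion of a leaf from a forest leaves a forest, this procedure is well-defined and terminates after labeling all $n$ vertices. By construction, the ordering satisfies the following invariant: for every $j$, vertex $j$ is a leaf of the subgraph of $G_A$ induced on $\{j,j+1,\dots,n\}$, i.e.\ $j$ has at most one neighbor of index larger than itself in the original graph.

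Next I would argue by induction on $j$ that, after steps $1,\dots,j$ of the elimination game, no fill edge has been added. The base case $j=0$ is trivial. For the inductive step, consider step $j$. By Definition, the edges added at this step form a clique on the set
\[
N_j=\{k>j : (j,k)\in E(\textrm{eliminate}(\cdots\textrm{eliminate}(G_A,1)\cdots),j-1))\}.
\]
By the inductive hypothesis, the graph present at the start of step $j$ has exactly the edge set of $G_A$ (no fill has been introduced yet), so $N_j$ equals the set of neighbors of $j$ in $G_A$ whose index exceeds $j$. By the invariant above, $|N_j|\le 1$, and a clique on a set of size at most one contains no edges. Hence step $j$ introduces no new edges, completing the induction. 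Applying this through $j=n$ yields $G_A^+=G_A$.

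The only real point requiring care is the interaction between the two inductions: the invariant ``vertex $j$ is a leaf of the subgraph on $\{j,\dots,n\}$'' is a statement about $G_A$, whereas what we need in the elimination game is a statement about the current graph at step $j$. These agree precisely because of the inductive claim that earlier steps generated no fill; this is the small circularity to watch, but it resolves cleanly because both inductions are indexed by $j$ and can be carried out simultaneously. The forest case reduces to the tree case connected-component by connected-component, since vertex eliminations in one component never affect edges in another.

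An alternative route would be to invoke Lemma~\ref{lemma: fill paths}: a fill edge $(i,j)$ requires a simple path in $G_A$ from $i$ to $j$ whose internal vertices all have indices less than $\min(i,j)$. In a forest the $i$--$j$ path is unique, and under the leaf-peeling ordering the first vertex $v_1$ adjacent to $\min(i,j)$ on that path (going toward the other endpoint) must have been labeled after $\min(i,j)$, because $\min(i,j)$ was a leaf at the time of its own labeling and hence $v_1$ was still present then. This contradicts the internal-vertex index condition. I expect the direct leaf-elimination argument to be the cleanest to write out, so that is what I would use.
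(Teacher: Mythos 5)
Your proof is correct and follows essentially the same route as the paper's: both construct an elimination ordering in which every vertex has at most one higher-numbered neighbor (the paper uses a DFS postorder, you use greedy leaf-peeling, which yields the same structural property) and then observe that eliminating such a vertex forms a clique on at most one vertex, hence no fill. Your explicit handling of the simultaneous induction is a welcome bit of extra care, but the underlying argument is the same.
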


\begin{proof}
On a tree or forest, any depth-first-search postorder is a perfect-elimination
ordering. 

By Lemma~\ref{lemma: fill paths}, all the fill edges occur within
connected components of $G_{A}$. Therefore, it is enough to show
that each tree in a forest has a perfect-elimination ordering. We
assume from here on that $G_{A}$ is a tree.

Let $v$ be an arbitrary vertex of $G_{A}$. Perform a depth-first
traversal of $G_{A}$ starting from the root $v$, and number the
vertices in postorder: give the next higher index to a vertex $v$
immediately after the traversal returns from the last child of $v$,
starting from index $1$. Such an ordering guarantees that in the
rooted tree rooted at $v$, a vertex $u$ has a higher index than
all the vertices in the subtree rooted at $u$.

Under such an ordering, the elimination of a vertex $u$ creates no
fill edges at all, because $u$ has at most one neighbor with a higher
index, its parent in the rooted tree.
\end{proof}
Most graphs do not have perfect-elimination orderings, but some orderings
are almost as good. The elimination of a vertex with only one higher-numbered
neighbor is called a perfect elimination step, because it produces
no fill edge. Eliminating a vertex with two higher-numbered neighbors
is not perfect, but almost: it produces one fill edge. If $G_{A}$
contains an isolated path
\[
v_{0}\leftrightarrow v_{1}\leftrightarrow v_{2}\leftrightarrow\cdots\leftrightarrow v_{\ell}\leftrightarrow v_{\ell+1}
\]
such that the degree of $v_{1},v_{2},\ldots,v_{\ell}$ in $G_{A}$
is $2$, then we can eliminate $v_{1},\ldots,v_{\ell}$ (in any order),
producing only $\ell$ fill edges and performing only $\Theta(\ell)$
operations. This observation is useful if we try to sparsify a graph
so that the sparsified graph has a good elimination ordering.

\section{Symbolic Elimination and the Elimination Tree}

We can use the fill-characterization technique that Lemma~\ref{lemma: elimination game}
describes to create an efficient algorithm for predicting fill. Predicting
fill, or even predicting just the nonzero counts in rows and columns
of the factor, can lead to more efficient factorization algorithms.
The improvements are not asymptotic, but they can be nonetheless significant.

The elimination of vertex $k$ adds to the graph the edges of a clique,
a complete subgraph induced by the higher-numbered neighbors of $k$.
Some of these edges may have already been part of the graph, but some
may be new. If we represent the edge set of the partially-eliminated
graph using a \emph{clique-cover} data structure, we efficiently simulate
the factorization process and enumerate the fill edges. An algorithm
that does this is called a \emph{symbolic elimination} algorithm.
It is symbolic in the sense that it simulates the sparse factorization
process, but without computing the numerical values of elements in
the Schur complement or the factor.

A clique cover represents the edges of an undirected graph using an
array of linked lists. Each list specifies a clique by specifying
the indices of a set of vertices: each link-list element specifies
one vertex index. The edge set of the graph is the union of the cliques.
We can create a clique-cover data structure by creating one two-vertex
clique for each edge in the graph.

A clique cover allows us to simulate elimination steps efficiently.
We also maintain an array of vertices. Each element in the vertex
array is a doubly-linked list of cliques that the vertex participates
in. To eliminate vertex $k$, we need to create a new clique containing
its higher-numbered neighbors. These neighbors are exactly the union
of higher-than-$k$ vertices in all the cliques that $k$ participates
in. We can find them by traversing the cliques that $k$ participates
in. For each clique $q$ that $k$ participates in, we traverse $q$'s
list of vertices and add the higher-than-$k$ vertices that we find
to the new clique. We add a vertex to the new clique at most once,
using a length-$n$ bit vector to keep track of which vertices have
already been added to the new clique. Before we move on to eliminate
vertex $k+1$, we clear the bits that we have set in the bit vector,
using the vertices in the new clique to indicate which bits must be
cleared. For each vertex $j$ in the new clique, we also add the new
clique to the list of cliques that $j$ participates in.

The vertices in the new clique, together with $k$, are exactly the
nonzero rows in column $k$ of a cancellation-free Cholesky factor
of $A$. Thus, the symbolic-elimination algorithm predicts the structure
of $L$ if there are no cancellations.

We can make the process more efficient by not only creating new cliques,
but merging cliques. Let $q$ be a clique that $k$ participates in,
and let $i>k$ and $j>k$ be vertices in $q$. The clique $q$ represents
the edge $(i,j)$. But the new clique that the elimination of $k$
creates also represents $(i,j)$, because both $i$ and $j$ belong
to the new clique. Therefore, we can partially remove clique $q$
from the data structure. The removal makes the elimination of higher-than-$k$
vertices belonging to it cheaper, because we will not have to traverse
it again. To remove $q$, we need each element of the list representing
$q$ to point to the element representing $q$ in the appropriate
vertex list. Because the vertex lists are doubly-linked, we can remove
elements from them in constant time.

Because each clique either represents a single nonzero of $A$ or
the nonzeros in a single column of a cancellation-free factor $L$,
the total size of the data structure is $\Theta(\eta(A)+\eta(L)+n)$.
If we do not need to store the column structures (for example, if
we are only interested in nonzero counts), we can remove completely
merged cliques. Because we create at most one list element for each
list element that we delete, the size of the data structure in this
scheme is bounded by the size of the original data structure, which
is only $\Theta(\eta(A)+n)$.

The number of operations in this algorithm is $\Theta(\eta(A)+\eta(L)+n)$,
which is often much less than the cost of the actual factorization,
\[
\phi(A)=\sum_{j=1}^{n}O\left(\eta^{2}\left(L_{j+1\colon n}\right)\right)\;.
\]
To see that this is indeed the case, we observe that each linked-list
element is touched by the algorithm only twice. An element of a clique
list is touched once when it is first created, and another time when
the clique is merged into another clique. Because the clique is removed
from vertex lists when it is merged, it is never referenced again.
An element of a vertex list is also touched only once after it is
created. Either the element is removed from the vertex's list before
the vertex is eliminated, or it is touched during the traveral of
the vertex's list.

The process of merging the cliques defines an important structure
that plays an important role in many sparse-matrix factorization algorithms,
the \emph{elimination tree} of $A$. To define the elimination tree,
we name some of the cliques. Cliques that are created when we initialize
the clique cover to represent the edge set of $G_{A}$ have no name.
Cliques that are formed when we eliminate a vertex are named after
the vertex. The elimination tree is a rooted forest on the vertices
$\{1,2,\ldots,n\}$. The parent $\pi(k)$ of vertex $k$ is the name
of the clique into which clique $k$ is merged, if it is. If clique
$k$ is not merged in the symbolic-elimination process, $k$ is a
root of a tree in the forest. There are many alternative definitions
of the elimination tree. The elimination tree has many applications.
In particular, it allows us to compute the number of nonzeros in each
row and column of a cancellation-free Cholesky factor in time almost
linear in $\eta(A)$, even faster than using symbolic elimination.

\section{Minimum-Degree Orderings}

The characterization of fill in Lemma~\ref{lemma: elimination game}
also suggests an ordering heuristic for reducing fill. If the elimination
of a yet-uneliminated vertex creates a clique whose size is the number
of the uneliminated neighbors of the chosen vertex, it makes sense
to eliminate the vertex with the fewer uneliminated neighbors. Choosing
this vertex minimizes the size of the clique that is created in the
next step and minimizes the arithmetic work in the next step. Ordering
heuristics based on this idea are called minimum-degree heuristics.

There are two problems in the minimum-degree idea. First, not all
the edges in the new cliques are new; some of them might be part of
$G_{A}$ or might have already been created by a previous elimination
step. It is possible to minimize not the degree but the actual new
fill, but this turns out to be more expensive algorithmically. Minimizing
fill in this way also turns out to produce orderings that are not
significantly better than minimum-degree orderings. Second, an optimal
choice for the next vertex to eliminate may be suboptimal globally.
There are families of matrices on which minimum-degree orderings generate
asymptotically more fill than the optimal ordering. Minimum-degree
and minimum-fill heuristics are greedy and myopic. They select vertices
for elimination one at a time, and the lack of global planning can
hurt them. This is why minimum fill is often not much better than
minimum fill.

Even though minimum-degree algorithms are theoretically known to be
suboptimal, they are very fast and often produce effective orderings.
On huge matrices nested-dissection orderings, which we discuss next,
are often more effective than minimum-degree orderings, but on smaller
matrices, minimum-degree orderings are sometimes more effective in
reducing fill and work.

Minimum-degree algorithms usually employ data structures similar to
the clique cover used by the symbolic elimination algorithm. The data
structure is augmented to allow vertex degrees to be determined or
approximated. Maintaining exact vertex degrees is expensive, since
a vertex cover does not represent vertex degrees directly. Many successful
minimum-degree algorithms therefore use degree approximations that
are cheaper to maintain or compute. Since the minimum-degree is only
a heuristic, these approximations are not neccessarily less effective
in reducing fill than exact degrees; sometimes they are more effective.

\section{Nested-Dissection Orderings for Regular Meshes}

Nested-dissection orderings are known to be approximately optimal,
and on huge matrices that can be significantly more effective than
other ordering heuristics. On large matrices, the most effective orderings
are often nested-dissection/minimum-degree hybrids.

Nested-dissection orderings are defined recursively using vertex subsets
called separators. 
\begin{defn}
Let $G=(V,E)$ be an undirected graph with $|V|=n$. Let $\alpha$
and $\beta$ be real positive constants, and let $f$ be a real function
over the positive integers. An $(\alpha,\beta,f)$ \emph{vertex separator}
in $G$ is a set $S\subseteq V$ of vertices that satisfies the following
conditions.

\begin{description}
\item [{Separation:}] There is a partition $V_{1}\cup V_{2}=V\setminus S$
such that for any $v\in V_{1}$ and $u\in V_{2}$, the edge $(u,v)\not\in E$.
\item [{Balance:}] $|V_{1}|,|V_{2}|\leq\alpha n$.
\item [{Size:}] $|S|\leq\beta f(n)$.
\end{description}
\end{defn}

Given a vertex separator $S$ in $G_{A}$, we order the rows and columns
of $A$ of the separator last, say in an arbitrary order (but if $G\setminus S$
contains many connected components then a clever ordering of $S$
can further reduce fill and work), and the rows and columns corresponding
to $V_{1}$ and $V_{2}$ first. By Lemma~\ref{lemma: fill paths},
this ensures that for any $v\in V_{1}$ and $u\in V_{2}$, the edge
$(u,v)$ is \emph{not} a fill edge, so $L_{u,v}=L_{v,u}=0$. Therefore,
the interleaving of vertices of $V_{1}$ and $V_{2}$ has no effect
on fill, so we can just as well order all the vertices of $V_{1}$
before all the vertices of $V_{2}$.

The function of the separator in the ordering is to ensure that $L_{u,v}=0$
for any $v\in V_{1}$ and $u\in V_{2}$. Any ordering in which the
vertices of $V_{1}$ appear first, followed by the vertices of $V_{2}$,
followed by the vertices of $S$, ensures that a $|V_{1}|$-by-$|V_{2}|$
rectangular block in $L$ does not fill. A good separator is one for
which this block is large. The size of $S$ determines the circumference
of this rectangular block, because half the circumference is $|V_{1}|+|V_{2}|=n-|S|$.
The size imbalance between $V_{1}$ and $V_{2}$ determines the aspect
ratio of this rectangle. For a given circumference, the area is maximized
the rectangle is as close to square as possible. Therefore, a small
balanced separator reduces fill more effectively than a large or unbalanced
separator.

By using separators in the subgraphs of $G_{A}$ induced by $V_{1}$
and $V_{2}$ to order the diagonal blocks of $A$ that correspond
to $V_{1}$ and $V_{2}$, we can avoid fill in additional blocks of
$L$. These blocks are usually smaller than the top-level $|V_{1}|$-by-$|V_{2}|$
block, but they are still helpful and significant in reducing the
total fill and work. If $|V_{1}|$ or $|V_{2}|$ are small, say smaller
than a constant threshold, we order the corresponding subgraphs arbitrarily. 

Let us see how this works on square two-dimensional meshes. To keep
the analysis simple, we use a cross-shaped separator that partitions
the mesh into four square or nearly-square subgraphs. We assume that
$G_{A}$ is an $n_{x}$-by-$n_{y}$ where $n_{x}n_{y}=n$ and where
$|n_{x}-n_{y}|\leq1$. The size of a cross-shaped separator in $G_{A}$
is $|S|=n_{x}+n_{y}-1<2\sqrt{n}$. To see this, we note that if $n_{x}=n_{y}=\sqrt{n}$
then $|S|=2\sqrt{n}-1$. Otherwise, without loss of generality $n_{x}=n_{y}-1<n_{y}$,
so $|S|=2n_{x}=2\sqrt{n_{x}n_{x}}<2\sqrt{n_{x}n_{y}}=2\sqrt{n}$.
The separator breaks the mesh into four subgraphs, each of which is
almost square (their $x$ and $y$ dimensions differ by at most $1$),
of size at most $n/4$. This implies that throught the recursion,
each size-$m$ subgraph that is produced by the nested-dissection
ordering has a $(0.25,0.5,\sqrt{m})$ $4$-way separator that we use
in the ordering.

We analyze the fill and work in the factorization by blocks of columns.
Let $S$ be the top-level separator and let $V_{1},V_{2},\ldots,V_{4}$
be the vertex sets of the separated subgraphs. We have 
\begin{eqnarray*}
\eta(L) & = & \eta\left(L_{\colon,S}\right)+\eta\left(L_{\colon,V_{1}}\right)+\eta\left(L_{\colon,V_{2}}\right)+\eta\left(L_{\colon,V_{3}}\right)+\eta\left(L_{\colon,V_{4}}\right)\\
 & \leq & \frac{|S|\left(|S|+1\right)}{2}+\eta\left(L_{\colon,V_{1}}\right)+\eta\left(L_{\colon,V_{2}}\right)+\eta\left(L_{\colon,V_{3}}\right)+\eta\left(L_{\colon,V_{4}}\right)\;.
\end{eqnarray*}
Bounding the fill in a block of columns that corresponds to one of
the separated subgraphs is tricky. We cannot simply substitute a similar
expression to form a reccurence. The matrix $A$ is square, but when
we analyze fill in one of these column blocks, we need to account
for fill in both the square diagonal block and in the subdiagonal
block. If $u\in S$ and $v\in V_{1}$ and $A_{u,v}\neq0$, then elements
in $L_{u,V_{1}}$ can fill. A accurate estimate of how much fill occurs
in such blocks of the factor is critical to the analysis of nested
dissection algorithms. If we use the trivial bound $\eta(L_{u,V_{1}})\leq n/4$,
we get an asymptotically loose bound $\eta(L)=O(n^{1.5})$ on fill.

To achieve an asymptotically tight bound, we set up a recurrence for
fill in a nested-dissection ordering of the mesh. Let $\bar{\eta}(m_{x},m_{y})$
be a bound on the fill in the columns corresponding to an $m_{x}$-by-$m_{y}$
submesh (with $|m_{x}-m_{y}|\leq1$). At most $2m_{x}+2m_{y}$ edges
connect the submesh to the rest of the mesh. Therefore we have
\[
\bar{\eta}(m_{x},m_{y})\leq\frac{\left(m_{x}+m_{y}-1\right)\left(m_{x}+m_{y}\right)}{2}+\left(m_{x}+m_{y}-1\right)\left(2m_{x}+2m_{y}\right)+4\bar{\eta}\left(\frac{m_{x}}{2},\frac{m_{y}}{2}\right)\;.
\]
The first term in the right-hand side bounds fill in the separator's
diagonal block. The second term, which is the crucial ingredient in
the analysis, bounds fill in the subdiagonal rows of the separator
columns. The third term bounds fill in the four column blocks corresponding
to the subgraphs of the separated submesh. If we cast the recurrence
in terms of $m=m_{x}m_{y}$ we obtain 
\[
\bar{\eta}(m)\leq\Theta\left(m\right)+4\bar{\eta}\left(\frac{m}{4}\right)\;.
\]
By the Master Theorem~CLR2ed\_Thm\_4.1, The solution of the recurrence
is $\bar{\eta}(m)=O(m\log m)$. Since $\eta(L)\leq\bar{\eta}(n)$,
we have $\eta(L)=O(n\log n)$. We can analyze work in the factorization
in a similar way. We set up a recurrence
\[
\bar{\phi}(m)\leq\Theta\left(m^{1.5}\right)+4\bar{\phi\left(\frac{m}{4}\right)\;,}
\]
whose solution leads to $\phi(A)=O(n\sqrt{n})$. It is possible to
show that these two bounds are tight and that under such a nested-dissection
ordering for a two-dimensional mesh, $\eta(L)=\Theta(n\log n)$ and
$\phi(A)=\Theta(n\sqrt{n})$.

For a three-dimensional mesh, a similar analysis yields $\eta(L)=\Theta(n^{4/3})$
and $\phi(A)=\Theta(n^{6/3})=\Theta(n^{2})$.

In practice, we can reduce the constants by stopping the recurrence
at fairly large subgraphs, say around $m=100$, and to order these
subgraphs using a minimum-degree ordering. This does not change the
asymptotic worst-case bounds on work and fill, but it usually improves
the actual work and fill counts. There are other nested-dissection/minimum-degree
hybrids that often improve the work and fill counts without hurting
the asymptotic worst-case bounds.
\begin{defn}
A class of graphs satisfies an $(\alpha,\beta,f)$ \emph{vertex-separator}
\emph{theorem} (or a separator theorem for short) if every $n$-vertex
graph in the class has an $(\alpha,\beta,f)$ vertex separator.
\end{defn}

\section{Generalized Nested-Dissection Orderings}

When $G_{A}$ is not a regular mesh, the analysis becomes much harder.
When applied to general graphs, the ordering framework that we described
in which a small balanced vertex separator is ordered last and the
connected components are ordered recursively is called \emph{generalized
nested dissection}.
\begin{defn}
A class of graphs satisfies an $(\alpha,\beta,f)$ \emph{vertex-separator}
\emph{theorem} (or a separator theorem for short) if every $n$-vertex
graph in the class has an $(\alpha,\beta,f)$ vertex separator.
\end{defn}

For example, planar graphs satisfy an $(2/3,\sqrt{8},\sqrt{n})$ separator
theorem. Since nested dissection orders subgraphs recursively, we
must ensure that subgraphs belong to the same class of graphs, so
that they can be ordered effectively as well. 
\begin{defn}
A class of graphs is said to be \emph{closed under subgraph} if every
subgraph of a graph in the class is also in the class.
\end{defn}

There are two ways to use small balanced vertex separators to order
an arbitrary graph. The first algorithm, which we call the \noun{lrt}
algorithm, guarantees an effective ordering for graphs belonging to
a class that satisfies a separator theorem and is closed under subgraph.
The algorithm receives an input a range $[i,j]$ of indices and graph
$G$ with $n$ vertices, $\ell$ of which may already have been ordered.
If $\ell>0$, then the ordered vertices have indices $j-\ell+1,\ldots,j$.
The algorithm assigns the rest of the indices, $i,\ldots,j-\ell$,
to the unnumbered vertices. The algorithm works as follows.
\begin{enumerate}
\item If $n$ is small enough, $n\leq(\beta(1-\alpha))^{2}$, the algorithm
orders the unnumbered vertices arbitrarily and returns.
\item The algorithm finds a small balanced vertex separator $S$ in $G$.
The separator separates the graph into subgraphs with vertex sets
$V_{1}$ and $V_{2}$. The two subgraphs may contain more than one
connected componet each.
\item The algorithm arbitrarily assigns indices $j-\ell-|S|+1,\ldots,j-\ell$
to the vertices of $S$.
\item The algorithm recurses on the subgraphs induced by $V_{1}\cup S$
and by $V_{2}\cup S$. The unnumbered vertices in the first subgraph
are assigned the middle range of $[i,j]$ and the second the first
range (starting from $i$).
\end{enumerate}
We initialize the algorithm by setting $\ell=0$ and $[i,j]=[1,n]$.

The second algorithm, called the \noun{gt} algorithm, finds a separator,
orders its vertices last, and then recurses on each connected component
of the graph with the separator removed; the vertices of each component
are ordered consecutively. In each recursive call the algorithm finds
a separator that separates the vertices of one component, ignoring
the rest of the graph. This algorithm does not guarantee an effective
orderings to any graph belonging to a class that satisfies a separator
theorem and is closed under subgraph; additional conditions on the
graphs are required, such as bounded degree or closure under edge
contractions. Therefore, we analyze the first algorithm.
\begin{thm}
Let $G$ be a graph belonging to a class that satisfies an $(\alpha,\beta,\sqrt{n})$
separator theroem and closed under subgraph. Ordering the vertices
of $G$ using the LRT algorithm leades to $O(n\log n)$ fill edges.
\end{thm}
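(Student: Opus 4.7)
The plan is to combine the fill-path characterization (Lemma~\ref{lemma: fill paths}) with a recurrence on the vertex-count of the subproblem that the \noun{lrt} algorithm processes. Let $F(n)$ denote the maximum number of fill edges produced by \noun{lrt} on a graph in the class with at most $n$ vertices (including any that are already numbered). I plan to set up the recurrence
\[
F(n) \;\leq\; F(n_1) + F(n_2) + c\,n,
\]
where $n_1+n_2 \leq n + \beta\sqrt{n}$, each $n_i \leq \alpha n + \beta\sqrt{n}$, and $c$ is a constant coming from $\binom{|S|}{2} \leq \tfrac{\beta^2}{2}n$.

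The first substantive step is to justify the recurrence. Let $S$ be the separator chosen at the current level, separating the unnumbered vertices into $V_1$ and $V_2$. The algorithm assigns to $S$ higher indices than to any vertex of $V_1 \cup V_2$, while already-numbered vertices (from outer calls) have indices still higher. I will check three claims using Lemma~\ref{lemma: fill paths}: (i)~no fill edge joins $V_1$ to $V_2$, because any simple path between such vertices must use an internal vertex of $S$ or an already-numbered vertex, both of which have higher indices than the endpoints; (ii)~every fill edge lying inside $V_1 \cup S$ corresponds to a fill path whose internal vertices lie entirely in $V_1$ (any excursion into $V_2$ would require two crossings through $S$, and a single vertex of $S$ in the path already violates the index condition), so it is accounted for by the recursive call on the induced subgraph $V_1 \cup S$; (iii)~the symmetric statement holds for $V_2 \cup S$. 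Fill edges with both endpoints in $S$ are bounded by $\binom{|S|}{2} \leq \tfrac{\beta^2}{2}n$; this is the $c\,n$ term (it may be double-counted across the two recursive calls, which only costs a factor of two that we absorb into $c$).

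The second step is to solve the recurrence, for which I will prove by strong induction that $F(n) \leq C n \log n$ for a sufficiently large constant $C$ and all $n$ above some threshold $n_0$. Using the hypothesis on each subproblem and the concavity of $x \mapsto x\log x$, we obtain
\[
F(n) \;\leq\; C(n_1+n_2)\log(\alpha n + \beta\sqrt{n}) + c\,n
\;\leq\; C(n+\beta\sqrt{n})\bigl(\log n + \log\alpha + o(1)\bigr) + c\,n .
\]
Expanding, the leading $Cn\log n$ matches, and the surplus is
$C n \log\alpha + C\beta\sqrt{n}\log n + c\,n + o(n)$. Since $\log\alpha<0$, this is non-positive once $C \geq c/|\log\alpha|$ (plus a margin for the $\sqrt{n}\log n$ term, which is $o(n)$). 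For $n \leq n_0 = (\beta(1-\alpha))^{-2}$ the algorithm orders the vertices arbitrarily and fill is trivially bounded by $\binom{n_0}{2}$, handling the base case. Since $F(n) \leq Cn\log n$ applied with the full graph of size $n$ gives the theorem, we conclude $\eta(L) - \eta(G_A) = O(n\log n)$.

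The main obstacle I anticipate is the $\beta\sqrt{n}$ \emph{overshoot} in the subproblem sizes, which is produced because the separator is duplicated into both recursive calls. This means the total vertex count grows by $O(\sqrt{n})$ at each level rather than staying fixed, so standard Master-Theorem book-keeping does not apply verbatim; I have to verify that this overshoot is absorbed by the slack coming from $\log\alpha < 0$. The rest of the argument—verifying the three fill-path claims and the base case—is routine once the index hierarchy ($V_1, V_2$ below $S$ below previously numbered vertices) is set down carefully.
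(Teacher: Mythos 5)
There is a genuine gap in step~(ii), and it is precisely the point at which the paper's proof has to work harder than yours. You claim that every fill edge lying inside $V_1\cup S$ is witnessed by a fill path whose internal vertices stay in $V_1$, because ``a single vertex of $S$ in the path already violates the index condition.'' That is true only when at least one endpoint of the fill edge is an \emph{unnumbered} vertex of $V_1$ (whose index is below all of $S$). It fails when one endpoint is in $S$ and the other is one of the $\ell$ vertices numbered by enclosing calls: both endpoints then have indices at least as large as part of $S$, so the witnessing path of Lemma~\ref{lemma: fill paths} may pass through a \emph{lower-indexed} separator vertex, detour into $V_2$, and return. Such a path does not survive restriction to the induced subgraph on $V_1\cup S$, so the fill edge need not be a fill edge of either recursive subproblem, and your recurrence never counts it. There can be up to $|S|\,\ell\leq\beta\ell\sqrt{n}$ such pairs, and because your $F$ depends on $n$ alone you have no handle on $\ell$; the only a priori bound is $\ell\leq n$, which turns this term into $\Theta(n^{3/2})$ and destroys the recurrence. (A related, milder issue: fill edges between two already-numbered vertices can also cross from $V_1$ to $V_2$ through $S$; these happen to be over-counted at the ancestor level where the later of the two was a separator vertex, but that needs to be said.)

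This is exactly why the paper's induction carries the second parameter $\ell$: its recurrence is
\[
\bar{\eta}(n,\ell)\;\leq\;\frac{\beta^{2}n}{2}+\beta\ell\sqrt{n}+\bar{\eta}(n_{1},\ell_{1})+\bar{\eta}(n_{2},\ell_{2}),
\]
with the separator-to-already-numbered pairs charged explicitly to the $\beta\ell\sqrt{n}$ term, and the induction hypothesis is strengthened to $\bar{\eta}(n,\ell)\leq c'(n+\ell)\log_{2}n+c''\ell\sqrt{n}$ so that this term can be absorbed (using $\ell_{1}+\ell_{2}\leq\ell+2\beta\sqrt{n}$ and $\sqrt{n_{i}}\leq\sqrt{\alpha n+\beta\sqrt n}$, the factor $\sqrt{\alpha}<1$ makes the $c''\ell\sqrt n$ part contract). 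Your treatment of the overshoot $n_1+n_2\leq n+\beta\sqrt n$ and of the base case is fine, and your claims (i) and (iii), restricted to unnumbered endpoints, are correct; but to repair the proof you must either track $\ell$ as a second induction parameter as the paper does, or find another argument bounding the total number of separator-to-ancestor-separator fill edges over the whole recursion by $O(n\log n)$.
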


\begin{proof}
We prove the theorem by induction on $n$. If $n\leq n_{0}$, the
theorem holds by setting the constant $c$ in the big-$O$ notation
high enough so that $cn\log n>n(n-1)/2$ for all $n\leq n_{0}$. Suppose
that the theorem holds for all graphs with fewer than $n$ vertices. 

The algorithm finds a separator $S$ that splits the graph into subgraphs
induced by $V_{1}$ and by $V_{2}$. We partition the fill edges into
four categories: (1) fill edges with two endpoints in $S$; (2) fill
edges with one endpoint in $S$ and the other in one of the $\ell$
already-numbered vertices; (3,4) fill edges with at least one endpoint
in $V_{2}$ or in $V_{2}$. Since there are no fill edges with one
endpoint in $V_{1}$ and the other in $V_{2}$, categories~3 and~4
are indeed disjoint.

The number of fill edges in Category~1 is at most $|S|(|S|-1)/2\leq\beta^{2}n/2$.

The number of fill edges in Category~2 is at most $|S|\ell\leq\beta\ell\sqrt{n}$.

Let $\bar{\eta}(n,\ell)$ be an upper bound on the number of fill
edges in the ordering produced by the algorithm on a graph with $n$
vertices, $\ell$ of which are already numbered.

The number of fill edges in Category~3 is at most $\bar{\eta}(|V_{1}|+|S|,\ell_{1})$,
where $\ell_{1}$ is the number of already-numbered vertices in $V_{1}\cup S$
\emph{after the vertices of $S$ have been numbered}. Note that $\ell_{1}$
may be smaller than $\ell+|S|$ because some of the $\ell$ vertices
that are initially numbered be in $V_{2}$. Similarly, the number
of fill edges in Category~4 is at most $\bar{\eta}(|V_{2}|+|S|,\ell_{2})$.

By summing the bounds for the four categories, we obtain a recurrence
on $\bar{\eta}$,
\begin{equation}
\bar{\eta}\left(n,\ell\right)\leq\frac{\beta^{2}n}{2}+\beta\ell\sqrt{n}+\bar{\eta}\left(|V_{1}|+|S|,\ell_{1}\right)+\bar{\eta}\left(|V_{2}|+|S|,\ell_{2}\right)\;.\label{eq:LRT fill recursion}
\end{equation}
We claim that 
\begin{equation}
\bar{\eta}\left(n,\ell\right)\leq c'(n+\ell)\log_{2}n+c''\ell\sqrt{n}\label{eq:LRT fill bound}
\end{equation}
for some constants $c'$ and $c''$. Since initially $\ell=0$, this
bound implies the $O(n\log n)$ bound on fill.

To prove the bound, we denote $n_{1}=|V_{1}|+|S|$ and $n_{2}=|V_{2}|+|S|$
and note the following bounds:
\[
\begin{array}{rcccl}
 &  & \ell_{1}+\ell_{2} & \leq & \ell+2\beta\sqrt{n}\\
n & \leq & n_{1}+n_{2} & \leq & n+\beta\sqrt{n}\\
(1-\alpha)n & \leq & n_{1},n_{2} & \leq & \alpha n+\beta\sqrt{n}
\end{array}
\]
The first inequality follows from the fact that every alread-numbered
vertex in the input to the two recursive calls is either one of the
$\ell$ initially-numbered vertices or a vertex of $S$. An initially-numbered
vertex that is not in $S$ is passed to only one of the recursive
calls; vertices in $S$ are be passed as already numbered to the two
calls, but there are at most $\beta\sqrt{n}$ of them. The second
inequality follows from the fact that the subgraphs passed to the
two recursive calls contain together all the vertices in $V=S\cup V_{1}\cup V_{2}$
and that $|S|\leq\beta\sqrt{n}$ vertices are passed to the both of
the recursive calls. The third inequality follows from the guarantees
on $|V_{1}|$, $|V_{2}|$, and $|S|$ under the separator theroem.

We now prove the claim (\ref{eq:LRT fill bound}) by induction on
$n$. For $n$ smaller than some constant size, we can ensure that
the claim holds simply by choosing $c'$ and $c''$ to be large enough.
In particular, $\bar{\eta}\left(n,\ell\right)\leq n(n-1)/2$, which
is smaller than the right-hand side of (\ref{eq:LRT fill bound})
for small enough $n$ and large enough $c'$ and $c''$.

For larger $n$, we use Equation~(\ref{eq:LRT fill recursion}) and
invoke the inductive assumption regarding the correctness of (\ref{eq:LRT fill bound}),
\begin{eqnarray*}
\bar{\eta}\left(n,\ell\right) & \leq & \frac{\beta^{2}n}{2}+\beta\ell\sqrt{n}+\bar{\eta}\left(|V_{1}|+|S|,\ell_{1}\right)+\bar{\eta}\left(|V_{2}|+|S|,\ell_{2}\right)\\
 & \leq & \frac{\beta^{2}n}{2}+\beta\ell\sqrt{n}+c'\left(n_{1}+\ell_{1}\right)\log_{2}n_{1}+c''\ell_{1}\sqrt{n_{1}}+c'\left(n_{2}+\ell_{2}\right)\log_{2}n_{2}+c''\ell_{2}\sqrt{n_{2}}\;.
\end{eqnarray*}
The rest of the proof only involves manipulations of the expression
in the second line to show that for large enough $c'$ and $c''$,
it is bounded by~(\ref{eq:LRT fill bound}). We omit the details.
\end{proof}
A similar analysis yields an analysis on arithmetic operations.
\begin{thm}
Let $G$ be a graph belonging to a class that satisfies an $(\alpha,\beta,\sqrt{n})$
separator theroem and closed under subgraph. Ordering the vertices
of $G$ using the LRT algorithm leads to $O(n^{1.5})$ arithmetic
operations in the algorithm.
\end{thm}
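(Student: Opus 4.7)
The plan is to mirror the structure of the proof of the preceding $O(n \log n)$ fill bound, but replace the fill accounting with an operation count via Theorem~\ref{thm:ops in sparse cholesky}, which tells us that $\phi(A) = \sum_j O(\eta^2(L_{j+1\colon n, j}))$. So it suffices to bound the sum of squared subdiagonal column counts of $L$ in the ordering produced by \noun{lrt}. I will induct on $n$ and track a work bound $\bar{\phi}(n,\ell)$ for an input with $n$ vertices, $\ell$ of which are already numbered (at the top of the recursion $\ell=0$).

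The key new observation, replacing the Category~1 and Category~2 fill counts, is a bound on the work contributed by the separator columns. A separator vertex is numbered just before the $\ell$ already-numbered vertices, so by Lemma~\ref{lemma: elimination game} the only potential nonzeros below its diagonal are among the other at most $|S|-1$ separator vertices and the $\ell$ already-numbered vertices. Hence each separator column contributes $O((|S|+\ell)^2)$ operations, and summing over all $|S|\le \beta\sqrt{n}$ separator columns yields $O\bigl(\sqrt{n}\,(\sqrt{n}+\ell)^2\bigr) = O(n^{1.5} + \ell^2\sqrt{n})$ work attributable to the top-level separator block. The two recursive subgraphs have sizes $n_1,n_2 \le \alpha n + \beta\sqrt{n}$ with $n_1+n_2 \le n + \beta\sqrt{n}$, and already-numbered counts $\ell_1,\ell_2$ with $\ell_1+\ell_2 \le \ell + 2\beta\sqrt{n}$, exactly as in the fill proof. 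This gives the recurrence
\[
\bar{\phi}(n,\ell) \;\le\; c_1\bigl(n^{1.5} + \ell^2\sqrt{n}\bigr) + \bar{\phi}(n_1,\ell_1) + \bar{\phi}(n_2,\ell_2).
\]

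I will then guess a closed form of the shape $\bar{\phi}(n,\ell) \le C_1 n^{1.5} + C_2 \ell^2 \sqrt{n}$ and verify it by induction. For the small base case, the bound $\bar{\phi}(n,\ell) \le n^3$ is absorbed by choosing $C_1,C_2$ large. For the inductive step, after substituting the hypothesis into the recurrence the $n^{1.5}$ terms reduce to showing $C_1(n_1^{1.5}+n_2^{1.5}) \le (C_1 - c_1) n^{1.5}$; since $n_1,n_2 \le \alpha n + O(\sqrt{n})$ with $n_1+n_2 \le n + O(\sqrt{n})$ and $x \mapsto x^{1.5}$ is convex, the worst case is $\{n_1,n_2\} = \{\alpha n,(1-\alpha)n\}$ up to lower-order corrections, and $\alpha^{1.5} + (1-\alpha)^{1.5} < 1$ strictly for every fixed $\alpha \in (0,1)$, so $C_1$ can be chosen large enough to close the induction. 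The $\ell^2\sqrt{n}$ terms are handled analogously using $(a+b)^2 \le 2a^2+2b^2$ together with $\ell_1+\ell_2 \le \ell + 2\beta\sqrt{n}$; the cross term $\ell\sqrt{n}\cdot\sqrt{n}=\ell n$ that arises will be absorbed into $n^{1.5}+\ell^2\sqrt{n}$ via the AM--GM inequality $\ell n \le \tfrac{1}{2}(n^{1.5} + \ell^2\sqrt{n})$. Setting $\ell=0$ then gives $\phi(A) = \bar{\phi}(n,0) = O(n^{1.5})$.

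I expect the main obstacle to be choosing the constants $C_1$ and $C_2$ so that both the $n^{1.5}$ and the $\ell^2\sqrt{n}$ terms close under the same induction simultaneously, given that the lower-order $\sqrt{n}$ corrections to $n_1,n_2,\ell_1,\ell_2$ introduce a tangle of cross terms (e.g.\ $\ell_i^2\sqrt{n_i}$ expands to include $\ell\sqrt{n}\cdot\sqrt{n}$ and $n\cdot\sqrt{n}$ contributions). This is essentially a bookkeeping exercise of the same flavor as the one the authors omitted at the end of the fill proof, and I would handle it by absorbing cross terms greedily into whichever of the two dominant terms has slack, exactly as the fill argument did.
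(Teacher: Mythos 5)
The paper itself gives no proof of this theorem --- it merely asserts that ``a similar analysis'' to the fill bound works --- so your proposal is being measured against an argument the authors never wrote out. Your plan is the right one, and its key new ingredient is correct: a separator vertex is numbered below only the other separator vertices and the $\ell$ already-numbered vertices, so each of the at most $\beta\sqrt{n}$ separator columns has at most $|S|+\ell-1$ subdiagonal nonzeros, and by Theorem~\ref{thm:ops in sparse cholesky} the separator block costs $O\bigl(\sqrt{n}(\sqrt{n}+\ell)^2\bigr)$ operations; this is the correct replacement for the Category~1 and Category~2 fill counts, and the recurrence and the ansatz $\bar{\phi}(n,\ell)\leq C_1 n^{1.5}+C_2\ell^2\sqrt{n}$ are the natural analogues of (\ref{eq:LRT fill recursion}) and (\ref{eq:LRT fill bound}).

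The one step that fails as literally written is the unweighted AM--GM split $\ell n\leq\tfrac12(n^{1.5}+\ell^2\sqrt{n})$. In the inductive step the child that inherits essentially all of $\ell$ contributes $C_2(\ell+\beta\sqrt{n})^2\sqrt{n_1}$, whose cross term is about $2C_2\beta\sqrt{\alpha}\,\ell n$; splitting this evenly deposits roughly $C_2\beta\sqrt{\alpha}\,\ell^2\sqrt{n}$ into the $\ell^2\sqrt{n}$ budget, whereas the slack in that budget is only $(1-\sqrt{\alpha})C_2\ell^2\sqrt{n}$. Since $\beta\sqrt{\alpha}>1-\sqrt{\alpha}$ for the parameters of interest (e.g.\ $\alpha=2/3$, $\beta=\sqrt{8}$), no choice of $C_1,C_2$ closes the induction this way. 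The repair is easy and stays within your framework: use the weighted inequality $\ell n\leq\tfrac{\epsilon}{2}\ell^2\sqrt{n}+\tfrac{1}{2\epsilon}n^{1.5}$ with $\epsilon$ chosen so that $\sqrt{\alpha}(1+\beta\epsilon)<1$, dumping the resulting $O(C_2/\epsilon)\,n^{1.5}$ charge into the $n^{1.5}$ budget, which has slack $(1-\gamma)C_1 n^{1.5}$ with $\gamma<1$ independent of $C_1,C_2$; then fix $C_2$ first and $C_1$ last. With that modification the induction closes and the argument is complete. (It is worth convincing yourself that the lossiness of bounding every separator column by $|S|+\ell$ is tolerable only because the constraint $\ell_1+\ell_2\leq\ell+2\beta\sqrt{n}$ keeps the \emph{total} boundary at depth $d$ of order $2^{d/2}\sqrt{n}$, so a typical subproblem of size $m$ has $\ell=O(\sqrt{m})$; the induction encodes exactly this, which is why it succeeds where a naive per-level worst case would suggest $\omega(n^{1.5})$.)
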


These results can be applied directly to matrices whose pattern graphs
are planar, and more generally to graphs that can be embedded in surfaces
with a bounded genus. Such graphs are closed under subgraph and satisfy
a $(2/3,\sqrt{8},\sqrt{n})$ separator theorem. Furthermore, the separator
in an $n$-vertex graph of this family can be found in $O(n)$ operations,
and it is possible to show that we can even find \emph{all} the separators
required in all the levels of the recursive algorithm in $O(n\log n)$
time.

For the \noun{gt} algorithm, which is somewhat easier to implement
and which finds separators faster (because the separator is not included
in the two subgraphs to be recursively partitioned), a separator theorem
is not suffiecient to guarantee comparable bounds on fill and work.
To ensure that the algorithm leads to similar asymptotic bounds, one
must also assume that the graphs are closed under edge contraction
or that they have a bounded degree. However, the two algorithms have
roughly the same applicability, because planar graphs and graphs that
can be embedded on surfaces with a bounded genus are closed under
edge contraction. It is not clear which of the two algorithms is better
in practice.

The fill and work results for both the \noun{lrt} and the \noun{gt}
algorithms can be extended to graphs that satisfy separator theorems
with separators smaller or larger than $\sqrt{n}$. See the original
articles for the asymptotic bounds.

Finally, we mention that an algorithm that finds approximately optimal
balanced vertex separators can be used to find a permutation that
approximately minimizes fill and work in sparse Cholesky. The algorithm
is similar in principle to the \noun{lrt} and \noun{gt} algorithms.
This result shows that up to a polylogarithmic factors, the quality
of vertex separators in a graph determines sparsity that we can achieve
in sparse Cholesky.

\section{Notes and References}

Bandwidth and envelope reduction orderings.

The idea of nested dissection, and the analysis of nested dissection
on regular meshes is due to George~XXX. The \noun{lrt} generalized-nested-dissection
algorithm and its analysis are due to Lipton, Rose, and Tarjan~XXX.
The \noun{gt} algorithm and its analysis are due to Gilbert and Tarjan~XXX.

\section*{Exercises}

\def\fact{\vcenter{\hbox{\includegraphics[width=1in]{matlab/plots/jik_chol_L.eps}}}}

\def\mat{\vcenter{\hbox{\includegraphics[height=1in]{matlab/plots/jik_chol_S.eps}}}}

\begin{xca}
The proof of Theorem~\ref{thm:ops in sparse cholesky} was essentially
based on the fact that the following algorithm is a correct implementation
of the Cholesky factorization:

\begin{lyxcode}
$S=A$

for~$j=1\colon n$

~~$L_{j,j}=\sqrt{S_{j,j}}$

~~$L_{j+1\colon n,j}=S_{j+1\colon n,j}/L_{j,j}$

~~for~$i=j+1\colon n$

~~~~for~$k=j+1\colon n$

~~~~~~$S_{i,k}=S_{i,k}-L_{i,j}L_{k,j}$

~~~~end

~~end

end
\end{lyxcode}
\end{xca}

\begin{enumerate}
\item Show that this implementation is correct.
\item Show that in each outer iteration, the code inside the inner loop,
the $k$ loop, performs $\eta^{2}(L_{j+1\colon n,j})$ nontrivial
subtractions (subtractions in which a nonzero value is subtracted).
\item This implementation of the Cholesky factorization is often called
$jik$ Cholesky, because the ordering of the loops is $j$ first (outermost),
then $j$, and finally $k$, the inner loop. In fact, all $6$ permutations
of the loop indices yield correct Cholesky factorizations; the expression
inside the inner loop should is the same in all the permutations.
Show this by providing $6$ appropriate \noun{Matlab} functions.
\item For each of the $6$ permutations, consider the middle iteration of
the outer loop. Sketch the elements of $A$, the elements of $L$,
and the elements of $S$ that are referenced during this iteration
of the outer loop. For the $jik$ loop ordering shown above, $A$
is not referenced inside outer loop, and for $L$ and $S$ the sketches
shown above.
\end{enumerate}
\begin{xca}
In this exercise we explore fill and work in the Cholesky factorization
of banded and low-profile matrices. We say that a matrix has a half-bandwidth
$k$ if for all $i<j-k$ we have $A_{j,i}=0$. 
\end{xca}

\begin{enumerate}
\item Suppose that $A$ corresponds to an $x$-by-$y$ two-dimensional mesh
whose vertices are ordered as in the previous chapter. What is the
half bandwidth of $A$? What happens when $x$ is larger than $y$,
and what happens when $y$ is larger? Can you permute $A$ to achieve
a minimal bandwidth in both cases?
\item Show that in the factorization of a banded matrix, all the fill occurs
within the band. That is, $L$ is also banded with the same bound
on its half bandwidth.
\item Compute a bound on fill and work as a function of $n$ and $k$.
\item In some cases, $A$ is banded but with a large bandwidth, but in most
of the rows and/or columns all the nonzeros are closer to the main
diagonal than predicted by the half-bandwidth. Can you derive an improved
bound that takes into account the local bandwidth of each row and/or
column? In particular, you need to think about whether a bound on
the rows or on the columns, say in the lower triangular part of $A$,
is more useful.
\item We ordered the vertices of our meshes in a way that matrices come
out banded. In many applications, the matrices are not banded, but
they can be symmetrically permuted to a banded form. Using an $x$-by-$y$
mesh with $x\gg y$ as a motivating example, develop a graph algorithm
that finds a permutation $P$ that clusters the nonzeros of $PAP^{T}$
near the main diagonal. Use a breadth-first-search as a starting point.
Consider the previous quesion in the exercise as you refine the algorithm.
\item Given an $x$-by-$y$ mesh with $x\gg y$, derive bounds for fill
and work in a generalized-nested-dissection factorization of the matrix
corresponding to the mesh. Use separators that approximately bisect
along the $x$ dimention until you reach subgraphs with a square aspect
ratio. Give the bounds in terms of $x$ and $y$.
\end{enumerate}
\begin{lyxcode}

\end{lyxcode}

 \setcounter{chapter}{3}

\chapter{\label{chapter:graphs-and-laplacians}Symmetric Diagonally-Dominant
Matrices and Graphs}

Support theory provides effective algorithms for constructing preconditioners
for diagonally-dominant matrices and effective ways to analyze these
preconditioners. This chapter explores the structure of diagonally-dominant
matrices and the relation between graphs and diagonallly-dominant
matrices. The next chapter will show how we can use the structure
of diagonally-dominant matrices to analyze preconditioners, and the
chapter that follows presents algorithms for constructing algorithms.

\section{Incidence Factorizations of Diagonally-Dominant Matrices}
\begin{defn}
\label{def:diagonally-dominant-matrix}A square matrix $A\in\mathbb{R}^{n\times n}$
is called \emph{diagonally-dominant} if for every $i=1,2,\ldots n$
we have
\[
A_{ii}\geq\sum_{{j=1\atop j\neq i}}^{n}\left|A_{ij}\right|\;.
\]
\end{defn}

Symmetric diagonally dominant matrices have symmetric factorizations
$A=UU^{T}$ such that each column of $U$ has at most two nonzeros,
and all nonzeros in each column have the same absolute values. We
now establish a notation for such columns. 
\begin{defn}
\label{def:edge-vertex-vectors}Let $1\leq i,j\leq n$, $i\neq j$.
A length-$n$ \emph{positive edge vector}, denoted $\left\langle i,-j\right\rangle $,
is the vector
\[
\left\langle i,-j\right\rangle =\begin{matrix}\strut\\
i\\
\strut\\
j\\
\strut
\end{matrix}\begin{bmatrix}\vdots\\
+1\\
\vdots\\
-1\\
\vdots
\end{bmatrix},\quad\left\langle i,-j\right\rangle _{k}=\left\{ \begin{array}{rl}
+1 & k=i\\
-1 & k=j\\
0 & \textrm{otherwise}.
\end{array}\right.
\]
A \emph{negative edge vector} $\left\langle i,j\right\rangle $ is
the vector
\[
\left\langle i,j\right\rangle =\begin{matrix}\strut\\
i\\
\strut\\
j\\
\strut
\end{matrix}\begin{bmatrix}\vdots\\
+1\\
\vdots\\
+1\\
\vdots
\end{bmatrix},\quad\left\langle i,j\right\rangle _{k}=\left\{ \begin{array}{rl}
+1 & k=i\\
+1 & k=j\\
0 & \textrm{otherwise}.
\end{array}\right.
\]
The reason for the assignment of signs to edge vectors will become
apparent later. A \emph{vertex vector} $\left\langle i\right\rangle $
is the unit vector
\[
\left\langle i\right\rangle _{k}=\left\{ \begin{array}{rl}
+1 & k=i\\
0 & \textrm{otherwise}.
\end{array}\right.
\]
\end{defn}

We now show that a symmetric diagonally dominant matrix can always
be expressed as a sum of outer products of edge and vertex vectors,
and therefore, as a symmetric product of a matrix whose columns are
edge and vertex vectors.
\begin{lem}
\label{lemma:diag-dom-rank-1-summations}Let $A\in\mathbb{R}^{n\times n}$
be a diagonally dominant symmetric matrix. We can decompose A as follows
\begin{eqnarray*}
A & = & \sum_{\substack{i<j\\
A_{ij}>0
}
}\left|A_{ij}\right|\left\langle i,j\right\rangle \left\langle i,j\right\rangle ^{T}\\
 &  & +\sum_{\substack{i<j\\
A_{ij}<0
}
}\left|A_{ij}\right|\left\langle i,-j\right\rangle \left\langle i,-j\right\rangle ^{T}\\
 &  & +\sum_{i=1}^{n}\left(A_{ii}-\sum_{\substack{j=1\\
j\neq i
}
}^{n}\left|A_{ij}\right|\right)\left\langle i\right\rangle \left\langle i\right\rangle ^{T}\\
 & = & \sum_{\substack{i<j\\
A_{ij}>0
}
}\left(\sqrt{\left|A_{ij}\right|}\;\left\langle i,j\right\rangle \right)\left(\sqrt{\left|A_{ij}\right|}\;\left\langle i,j\right\rangle \right)^{T}\\
 &  & +\sum_{\substack{i<j\\
A_{ij}<0
}
}\left(\sqrt{\left|A_{ij}\right|}\;\left\langle i,-j\right\rangle \right)\left(\sqrt{\left|A_{ij}\right|}\;\left\langle i,-j\right\rangle \right)^{T}\\
 &  & +\sum_{i=1}^{n}\left(\sqrt{A_{ii}-\sum_{\substack{j=1\\
j\neq i
}
}^{n}\left|A_{ij}\right|}\;\left\langle i\right\rangle \right)\left(\sqrt{A_{ii}-\sum_{\substack{j=1\\
j\neq i
}
}^{n}\left|A_{ij}\right|}\;\left\langle i\right\rangle \right)^{T}\;.
\end{eqnarray*}
\end{lem}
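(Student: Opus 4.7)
The plan is a direct entry-by-entry verification. The three types of rank-one terms have very simple patterns: $\langle i,j\rangle\langle i,j\rangle^{T}$ puts $+1$ in positions $(i,i),(j,j),(i,j),(j,i)$; $\langle i,-j\rangle\langle i,-j\rangle^{T}$ puts $+1$ in $(i,i),(j,j)$ and $-1$ in $(i,j),(j,i)$; and $\langle i\rangle\langle i\rangle^{T}$ is supported only at $(i,i)$. So I would fix an index pair $(k,\ell)$ and compare the $(k,\ell)$-entry of the two sides.

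First I would check the off-diagonal entries with $k<\ell$. Among the edge-vector terms, only the one with $(i,j)=(k,\ell)$ contributes to position $(k,\ell)$. If $A_{k\ell}>0$, only the positive-edge sum fires, contributing $+|A_{k\ell}|=A_{k\ell}$; if $A_{k\ell}<0$, only the negative-edge sum fires, contributing $-|A_{k\ell}|=A_{k\ell}$; if $A_{k\ell}=0$, neither sum fires. The vertex-vector terms never touch off-diagonal positions. By symmetry of $A$, the $(\ell,k)$-entry is handled at the same time. This establishes agreement on all off-diagonal entries without using diagonal dominance.

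Next I would check a diagonal entry $(k,k)$. Each edge-vector term, whether positive or negative, contributes $|A_{ij}|$ to position $(k,k)$ precisely when $k\in\{i,j\}$. Summing over all such terms and using the symmetry of $A$, the total contribution from edge vectors equals $\sum_{j\neq k}|A_{kj}|$. The vertex-vector term at index $k$ contributes exactly $A_{kk}-\sum_{j\neq k}|A_{kj}|$, so the total is $A_{kk}$, as required. This verifies the first displayed equality.

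The second displayed equality (rewriting each rank-one term as $(\sqrt{\,\cdot\,}\,v)(\sqrt{\,\cdot\,}\,v)^{T}$) is where diagonal dominance is needed: the coefficients $|A_{ij}|$ attached to the edge vectors are obviously nonnegative, but the vertex-vector coefficients $A_{kk}-\sum_{j\neq k}|A_{kj}|$ are nonnegative precisely because $A$ is diagonally dominant in the sense of Definition~\ref{def:diagonally-dominant-matrix}. Hence each square root is real and the factorization in the second form is well defined. The main obstacle is really just bookkeeping — being careful with the $i<j$ versus $i\neq j$ conventions and with the sign of $A_{ij}$ when matching edge-vector contributions — rather than any substantive difficulty, so I would keep the proof short and symbolic rather than writing out explicit sums.
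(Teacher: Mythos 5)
Your proposal is correct and follows essentially the same route as the paper's proof: an entry-by-entry verification that exactly one edge term contributes to each off-diagonal position and that the vertex terms fix up the diagonal, with the second equality being a trivial redistribution of scalars. Your explicit remark that diagonal dominance is what makes the vertex-vector square roots real is a welcome touch the paper leaves implicit, but it does not change the argument.
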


\begin{proof}
The terms in the summations in lines 1 and 4 are clearly equal (we
only distributed the scalars), and so are the terms in lines 2 and
5 and in lines 3 and 6. Therefore, the second equality holds.

We now show that the first equality holds. Consider the rank-$1$
matrix
\[
\left\langle i,-j\right\rangle \left\langle i,-j\right\rangle ^{T}=\begin{bmatrix}\ddots\\
 & +1 &  & -1\\
 &  & \ddots\\
 & -1 &  & +1\\
 &  &  &  & \ddots
\end{bmatrix}\;,
\]
and similarly,
\[
\left\langle i,j\right\rangle \left\langle i,j\right\rangle ^{T}=\begin{bmatrix}\ddots\\
 & +1 &  & +1\\
 &  & \ddots\\
 & +1 &  & +1\\
 &  &  &  & \ddots
\end{bmatrix}
\]
(in both matrices the four nonzeros are in rows and columns $i$ and
$j$). Suppose $A_{ij}>0$ for some $i\neq j$. The only contribution
to element $i,j$ from the three sums is from a single term in the
first sum, either the term $\left|A_{ij}\right|\left\langle i,j\right\rangle \left\langle i,j\right\rangle ^{T}$
or the term $\left|A_{ji}\right|\left\langle j,i\right\rangle \left\langle j,i\right\rangle ^{T}$,
depending on whether $i>j$. The $i,j$ element of this rank-$1$
matrix is $\left|A_{ij}\right|=A_{ij}$. If $A_{ij}<0$, a similar
argument shows that only $\left|A_{ij}\right|\left\langle i,-j\right\rangle \left\langle i,-j\right\rangle ^{T}$
(assuming $i<j$) contributes to the $i,j$ element, and that the
value of the element $-\left|A_{ij}\right|=A_{ij}$. The third summation
ensures that the values of diagonal elements is also correct.
\end{proof}
The matrix decompositions of this form play a prominent role in support
theory, so we give them a name:
\begin{defn}
\label{def:incidence-factorization}A matrix whose columns are scaled
edge and vertex vectors (that is, vectors of the forms $c\left\langle i,-j\right\rangle $,
$c\left\langle i,j\right\rangle $, and $c\left\langle i\right\rangle $)
is called an \emph{incidence matrix}. A factorization $A=UU^{T}$
where $U$ is an incidence matrix is called an \emph{incidence factorization}.
An incidence factorization with no zero columns, with at most one
vertex vector for each index $i$, with at most one edge vector for
each index pair $i,j$, and whose positive edge vectors are all of
the form $c\left\langle \min(i,j),-\max(i,j)\right\rangle $ is called
a \emph{canonical incidence factorization}.
\end{defn}

\begin{lem}
Let $A\in\mathbb{R}^{n\times n}$ be a diagonally dominant symmetric
matrix. Then $A$ has an incidence factorization $A=UU^{T}$, and
a unique canonical incidence factorization.
\end{lem}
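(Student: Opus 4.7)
Existence is essentially immediate from Lemma~\ref{lemma:diag-dom-rank-1-summations}. The second equality in that lemma writes $A=\sum_k u_k u_k^T$ where each $u_k$ is either a scaled positive edge vector, a scaled negative edge vector, or a scaled vertex vector. Assembling the $u_k$ as the columns of a matrix $U$ therefore gives $A=UU^T$ with $U$ an incidence matrix in the sense of Definition~\ref{def:incidence-factorization}. Note that diagonal dominance of $A$ is precisely what makes the slacks $A_{ii}-\sum_{j\neq i}\left|A_{ij}\right|$ nonnegative, so the square roots that define the scalars of the vertex columns are real.

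For the canonical factorization, the plan is to take this same decomposition and simply discard every column whose scalar coefficient is zero. The resulting $U$ is canonical by inspection of the three summations of Lemma~\ref{lemma:diag-dom-rank-1-summations}: the vertex sum runs over $i=1,\ldots,n$ and therefore contributes at most one vertex column per index; the two edge sums run over $i<j$ and are mutually exclusive (they are indexed by the sign of $A_{ij}$), so each unordered pair $\{i,j\}$ contributes at most one edge column; and in the positive-edge sum the vector appears as $\langle i,-j\rangle$ with $i<j$, i.e.\ already in the normalized form $c\langle\min(i,j),-\max(i,j)\rangle$ required by the definition.

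For uniqueness, I would read the columns of $U$ off the entries of $A$. Fix $i<j$ and look at the $(i,j)$ entry of $A=UU^T$. Vertex columns contribute nothing off the diagonal; an edge column contributes to $A_{ij}$ only when both of its nonzeros lie in rows $i$ and $j$, and it then contributes $+c^2$ if it is a negative edge $c\langle i,j\rangle$ and $-c^2$ if it is a positive edge $c\langle i,-j\rangle$. By the canonical hypothesis at most one such edge column exists, so its type is forced by the sign of $A_{ij}$ and its magnitude by $\left|c\right|=\sqrt{\left|A_{ij}\right|}$. The normalization condition on positive edges pins down the ordered form of the vector once its type is fixed, and since $c$ and $-c$ produce the same rank-one outer product $cu(cu)^T$, the column is uniquely determined up to an irrelevant overall sign (which we may kill by requiring $c\ge 0$). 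With the edge columns determined, each diagonal entry $A_{ii}$ determines the remaining vertex scalar at index $i$ as $\sqrt{A_{ii}-\sum_{j\neq i}\left|A_{ij}\right|}$, and uniqueness of the factorization up to column permutation follows.

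The only real subtlety will be the bookkeeping around the sign of each scalar $c$: because $c\langle\cdot\rangle(c\langle\cdot\rangle)^T$ is insensitive to the sign of $c$, ``unique'' has to be interpreted as uniqueness of the multiset of rank-one contributions (equivalently, uniqueness of the columns of $U$ up to permutation and an individual sign flip per column). I do not expect any genuinely hard step; the rest is a matter of carefully matching each matrix entry to the forced edge/vertex contribution.
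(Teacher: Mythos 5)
Your proposal is correct and follows essentially the same route as the paper: existence is read off from the rank-one decomposition of Lemma~\ref{lemma:diag-dom-rank-1-summations}, and uniqueness of the canonical factorization is obtained by matching each entry $A_{ij}$ to the single edge column it forces (type from the sign, magnitude $\sqrt{\left|A_{ij}\right|}$) and then recovering the vertex scalars from the diagonal. Your added caveat about the $c\mapsto-c$ sign ambiguity is a reasonable refinement that the paper glosses over by implicitly taking nonnegative scalars, but it does not change the argument.
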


\begin{proof}
The existence of the factorization follows directly from Lemma~\ref{lemma:diag-dom-rank-1-summations}.
We now show that the canonical incidence factorization is uniquely
determined by $A$. Suppose that $A_{ij}=0$. Then $U$ cannot have
a column which is a nonzero multiple of $\left\langle i,j\right\rangle $,
$\left\langle i,-j\right\rangle $, or $\left\langle -i,j\right\rangle $,
since if it did, there would be only one such column, which would
imply $\left(UU^{T}\right)_{ij}\neq0=A_{ij}$. Now suppost that $A_{ij}>0$.
Then $U$ must have a column $\sqrt{A_{ij}}\,\left\langle i,j\right\rangle $.
Similarly, if $A_{ij}<0$, then $U$ must have a column $\sqrt{-A_{ij}}\,\left\langle \min(i,j),-\max(i,j)\right\rangle $.
The uniqueness of the edge vectors implies that the scaled vertex
vectors in $U$ are also unique. 
\end{proof}

\section{Graphs and Their Laplacians Matrices}

We now define the connection between undirected graphs and diagonally-dominant
symmetric matrices. 
\begin{defn}
Let $G=(\{1,2,\ldots n\},E)$ be an undirected graph on the vertex
set $\{1,2,\ldots,n\}$ with no self loops or parallel edges. That
is, the edge-set $E$ consists of unordered pairs of unequal integers
$(i,j)$ such that $1\leq i,j\leq n$ and $i\neq j$. The degree of
a vertex $i$ is the numbers of edges incident on it. The \emph{Laplacian}
of $G$ is the matrix $A\in\mathbb{R}^{n\times n}$ such that
\[
A_{ij}=\left\{ \begin{array}{rl}
\textrm{degree}(i) & i=j\\
-1 & (i,j)\in E\text{ (the index pair is unordered)}\\
0 & \textrm{otherwise}.
\end{array}\right.
\]
\end{defn}

\begin{lem}
The Laplacian of an undirected graph is symmetric and diagonally dominant.
\end{lem}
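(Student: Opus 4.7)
The plan is to verify the two properties directly from the definition of the Laplacian, since both claims reduce to elementary bookkeeping about the entries of $A$.

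First I would handle symmetry. The off-diagonal entries of $A$ are defined in terms of membership of an unordered pair $(i,j)$ in $E$. Because the edges are unordered, $(i,j)\in E$ if and only if $(j,i)\in E$, so $A_{ij}=A_{ji}$ whenever $i\neq j$. The diagonal entries are trivially equal to themselves, so $A=A^{T}$.

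Next I would verify diagonal dominance. Fix a row index $i$. By the definition, for $j\neq i$ we have $|A_{ij}|=1$ precisely when $(i,j)\in E$, and $|A_{ij}|=0$ otherwise. Hence
\[
\sum_{\substack{j=1\\ j\neq i}}^{n}|A_{ij}|=\#\{j\neq i:(i,j)\in E\}=\textrm{degree}(i)=A_{ii},
\]
so Definition~\ref{def:diagonally-dominant-matrix} is satisfied (with equality, in fact).

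There is no real obstacle here; the entire content is that the definition of the Laplacian was arranged exactly so that the row sum of absolute values of off-diagonal entries matches the diagonal entry. The only thing to be careful about is that the graph is assumed to have no self-loops and no parallel edges, so every edge incident to vertex $i$ contributes exactly one $-1$ to row $i$ (not to the diagonal and not doubled), making the count $\textrm{degree}(i)$ exact rather than merely an upper bound.
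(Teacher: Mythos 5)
Your proof is correct and follows essentially the same route as the paper's: symmetry comes from the edges being unordered, and diagonal dominance comes from counting that row $i$ has exactly $\textrm{degree}(i)$ off-diagonal entries, each of absolute value $1$, so the off-diagonal absolute row sum equals $A_{ii}$. Your remark about the no-self-loops/no-parallel-edges assumption is a nice explicit touch that the paper leaves implicit.
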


\begin{proof}
The Laplacian is symmetric because the graph is undirected. The Laplacian
is diagonally dominant because the number of off-diagonal nonzero
entries in row $i$ is exactly $\textrm{degree}(i)$ and the value
of each such nonzero is $-1$.
\end{proof}
We now generalize the definition of Laplacians to undirected graphs
with positive edge weights.
\begin{defn}
Let $G=(\{1,2,\ldots n\},E,c)$ be a weighted undirected graph on
the vertex set $\{1,2,\ldots,n\}$ with no self loops, and with a
weight function $c:E\rightarrow\mathbb{R}\setminus\{0\}$. The \emph{weighted
Laplacian} of $G$ is the matrix $A\in\mathbb{R}^{n\times n}$ such
that
\[
A_{ij}=\left\{ \begin{array}{rl}
\sum_{(i,k)\in E}\left|c(i,k)\right| & i=j\\
-c(i,j) & (i,j)\in E\\
0 & \textrm{otherwise}.
\end{array}\right.
\]
If some of the edge weights are negative, we call $G$ a \emph{signed}
graph, otherwise, we simply call it a weighted graph.
\end{defn}

\begin{lem}
The Laplacian of a weighted undirected graph (signed or unsigned)
is symmetric and diagonally dominant.
\end{lem}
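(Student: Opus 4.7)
My plan is to mirror the proof given for the unweighted case, since the weighted definition has been set up precisely so that the same argument goes through with $|c(i,j)|$ playing the role of the constant $1$. The two assertions (symmetry and diagonal dominance) are independent and can be handled in turn; both are essentially verifications from the definition.

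First, I would establish symmetry. Because $G$ is undirected, an edge is an unordered pair, so the weight function $c$ assigns a single value to each edge: $c(i,j)$ and $c(j,i)$ denote the same scalar. From the definition, $A_{ij} = -c(i,j) = -c(j,i) = A_{ji}$ when $(i,j) \in E$, and both $A_{ij}$ and $A_{ji}$ are zero when $(i,j) \notin E$. The diagonal entries $A_{ii}$ pose no issue for symmetry. This observation does not depend on the sign of $c$, so it handles both the signed and unsigned cases uniformly.

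Next, I would verify diagonal dominance (Definition~\ref{def:diagonally-dominant-matrix}) by a direct computation of the row sum of absolute values of off-diagonal entries. For any fixed $i$,
\[
\sum_{\substack{j=1\\ j \neq i}}^{n} |A_{ij}| \;=\; \sum_{(i,j) \in E} |-c(i,j)| \;=\; \sum_{(i,k) \in E} |c(i,k)| \;=\; A_{ii},
\]
where the first equality uses that $A_{ij} = 0$ when $(i,j) \notin E$, the second is just $|-x| = |x|$, and the third is the definition of $A_{ii}$. Thus diagonal dominance holds with equality in every row. Notice that taking absolute values before summing is exactly why the signed case causes no difficulty: the diagonal is built from $|c(i,k)|$, not from $c(i,k)$, so negative edge weights do not spoil the inequality.

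There is no real obstacle in this proof; the only thing to be careful about is the convention that an undirected edge carries a single weight (so that the matrix is well-defined and symmetric in the first place), and that the diagonal is defined via $|c|$ rather than $c$, which is what buys diagonal dominance in the signed case. One could alternatively deduce the lemma as a corollary of Lemma~\ref{lemma:diag-dom-rank-1-summations} applied in reverse, by exhibiting an explicit incidence factorization of the weighted Laplacian, but the direct verification above is shorter and self-contained.
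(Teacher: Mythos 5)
Your proof is correct and follows essentially the same route as the paper: symmetry is read off from the undirectedness of the graph, and diagonal dominance is obtained by computing that the off-diagonal absolute row sum $\sum_{j\neq i}|A_{ij}|=\sum_{(i,j)\in E}|c(i,j)|$ equals $A_{ii}$ exactly. Your added remarks on why the signed case is harmless are a nice clarification but do not change the argument.
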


\begin{proof}
Again symmetry follows from the fact that the graph is undirected.
Diagonal dominance follows from the following equation, which holds
for any $i$, $1\leq i\leq n$:
\[
\sum_{\substack{j=1\\
j\neq i
}
}^{n}\left|A_{ij}\right|=\sum_{\substack{j=1\\
j\neq i\\
A_{ij}\neq0
}
}^{n}\left|A_{ij}\right|=\sum_{(i,j)\in E}^{n}\left|A_{ij}\right|=\sum_{(i,j)\in E}^{n}\left|c(i,j)\right|=A_{ii}\;.
\]
\end{proof}
If we allow graphs to have nonnegative vertex weights as well as edge
weights, then this class of graphs becomes completely isomorphic to
symmetric diagonally-dominant matrices.
\begin{defn}
\label{def:general-laplacians}Let $G=(\{1,2,\ldots n\},E,c,d)$ be
a weighted undirected graph on the vertex set $\{1,2,\ldots,n\}$
with no self loops, and with weight functions $c:E\rightarrow\mathbb{R}\setminus\{0\}$
and $d:\{1,\ldots,n\}\rightarrow\mathbb{R}_{+}\cup\{0\}$. The \emph{Laplacian}
of $G$ is the matrix $A\in\mathbb{R}^{n\times n}$ such that
\[
A_{ij}=\left\{ \begin{array}{rl}
d(i)+\sum_{(i,k)\in E}\left|c(i,k)\right| & i=j\\
-c(i,j) & (i,j)\in E\\
0 & \textrm{otherwise}.
\end{array}\right.
\]

A vertex $i$ such that $d(i)>0$ is called a \emph{strictly dominant}
vertex.
\end{defn}

\begin{lem}
The Laplacians of the graphs defined in Definition~\ref{def:general-laplacians}
are symmetric and diagonally dominant. Furthermore, these graphs are
isomorphic to symmetric diagonally-dominant matrices under this Laplacian
mapping.
\end{lem}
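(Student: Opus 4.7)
The plan is to handle the two assertions separately: first the easy algebraic check that any such Laplacian is symmetric and diagonally dominant, then the bijectivity of the mapping $G \mapsto A$ between the class of weighted graphs described in Definition~\ref{def:general-laplacians} and the class of symmetric diagonally-dominant matrices in $\mathbb{R}^{n\times n}$.

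For the first claim, symmetry is immediate: the graph is undirected, so the weight function $c$ is defined on unordered pairs, which forces $A_{ij}=A_{ji}=-c(i,j)$. For diagonal dominance, I would just read off the definition: since $d(i)\geq 0$ and $|A_{ij}|=|c(i,j)|$ for $(i,j)\in E$ and $0$ otherwise,
\[
A_{ii}=d(i)+\sum_{(i,k)\in E}|c(i,k)|=d(i)+\sum_{j\neq i}|A_{ij}|\geq\sum_{j\neq i}|A_{ij}|\;.
\]

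For the isomorphism, I would exhibit an explicit inverse to the map $L:G\mapsto A$ and verify both compositions are the identity. Given a symmetric diagonally-dominant $A\in\mathbb{R}^{n\times n}$, define the graph $G(A)=(\{1,\ldots,n\},E,c,d)$ by
\[
E=\{(i,j):i\neq j,\;A_{ij}\neq 0\},\quad c(i,j)=-A_{ij}\text{ for }(i,j)\in E,\quad d(i)=A_{ii}-\sum_{\substack{j=1\\ j\neq i}}^{n}|A_{ij}|\;.
\]
Three things to check: (a) $G(A)$ genuinely belongs to the class of Definition~\ref{def:general-laplacians}, i.e.\ $c$ takes values in $\mathbb{R}\setminus\{0\}$ (by construction) and $d$ takes values in $\mathbb{R}_{+}\cup\{0\}$ (exactly the diagonal dominance hypothesis on $A$); (b) $L(G(A))=A$, which is a direct substitution into the three cases of Definition~\ref{def:general-laplacians}; and (c) $G(L(G))=G$ for every weighted graph $G$ in the class, which reduces to observing that the edge set, edge weights, and vertex weights can each be read off unambiguously from the entries of $L(G)$ using the formulas above (the off-diagonal entries determine $E$ and $c$, and then the diagonal entries determine $d$).

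There is no real obstacle here; the main subtlety is just bookkeeping to ensure the two classes are set up so that the correspondence is actually a bijection rather than merely a surjection. In particular one must note that we have excluded $c(i,j)=0$ from the definition so that the edge set is uniquely recoverable from $A$ (otherwise ``present edge with weight zero'' and ``absent edge'' would both map to $A_{ij}=0$), and that we allow $d(i)=0$ so that every diagonally dominant $A$—including strictly dominant and non-strictly dominant rows—has a preimage. Once these conventions are in place, the verification is routine.
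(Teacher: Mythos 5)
Your proposal is correct and follows essentially the same route as the paper: the same direct computation for symmetry and diagonal dominance, and the same explicit inverse formulas ($E$ and $c$ from the off-diagonal entries, $d$ from the diagonal excess) for the isomorphism. Your only addition is to spell out the bookkeeping the paper leaves implicit — that diagonal dominance is exactly what guarantees $d(i)\geq 0$ so the inverse lands in the right class, and that both compositions are identities — which is a welcome but not substantively different elaboration.
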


\begin{proof}
Symmetry again follows from the fact that the graph is undirected.
For any $i$ we have
\[
\sum_{\substack{j=1\\
j\neq i
}
}^{n}\left|A_{ij}\right|=\sum_{\substack{j=1\\
j\neq i\\
A_{ij}\neq0
}
}^{n}\left|A_{ij}\right|=\sum_{(i,j)\in E}^{n}\left|A_{ij}\right|=\sum_{(i,j)\in E}^{n}\left|c(i,j)\right|=A_{ii}-d(i)\;,
\]
so
\[
A_{ii}=d(i)+\sum_{\substack{j=1\\
j\neq i
}
}^{n}\left|A_{ij}\right|\geq\sum_{\substack{j=1\\
j\neq i
}
}^{n}\left|A_{ij}\right|
\]
because $d(i)\geq0$. This shows that Laplacians are diagonally dominant.
The isomorphism follows from the fact that the following expressions
uniquely determine the graph $(\{1,.\ldots,n\},E,c,d)$ associated
with a diagonally-dominant symmetric matrix:
\begin{eqnarray*}
(i,j)\in E & \text{iff} & i\neq j\text{ and }A_{ij}\neq0\\
c(i,j) & = & -A_{ij}\\
d(i) & = & A_{ii}-\sum_{\substack{j=1\\
j\neq i
}
}^{n}\left|A_{ij}\right|\;.
\end{eqnarray*}
\end{proof}
We prefer to work with vertex weights rather than allowing self loops
because edge and vertex vectors are algebraically different. As we
shall see below, linear combinations of edge vectors can produce new
edge vectors, but never vertex vectors. Therefore, it is convenient
to distinguish edge weights that correspond to scaling of edge vectors
from vertex weights that correspond to scaling of vertex vectors.

In algorithms, given an explicit representation of a diagonally-dominant
matrix $A$, we can easily compute an explicit representation of an
incidence factor $U$ (including the canonical incidence factor if
desired). Sparse matrices are often represented by a data structure
that stores a compressed array of nonzero entries for each row or
each column of the matrix. Each entry in a row (column) array stores
the column index (row index) of the nonzero, and the value of the
nonzero. From such a representation of $A$ we can easily construct
a sparse representation of $U$ by columns. We traverse each row of
$A$, creating a column of $U$ for each nonzero in the upper (or
lower) part of $A$. During the traversal, we can also compute all
the $d(i)$'s. The conversion works even if only the upper or lower
part of $A$ is represented explicitly.

We can use the explicit representation of $A$ as an implicit representation
of $U$, with each off-diagonal nonzero of $A$ representing an edge-vector
column of $U$. If $A$ has no strictly-dominant rows, that is all.
If $A$ has strictly dominant rows, we need to compute their weights
using a linear traversal of $A$.

\section{Laplacians and Resistive Networks}

Weighted (but unsigned) Laplacians model voltage-current relationships
in electrical circuits made up of resistors. This fact has been used
in developing some support preconditioners. This section explains
how Laplacians model resistive networks.

Let $G_{A}=(\{1,\ldots,n\},E,c)$ be an edge-weighted graph and let
$A$ be its Laplacian. We view $G_{A}$ as an electrical circuit with
$n$ nodes (connection points) and with $m=|E|$ resistors. If $(i,j)\in E$,
then there is a resistor with capacitance $c(i,j)$ between nodes
$i$ and $j$ (equivalently, with resistence $1/c(i,j)$).

Given a vector $x$ of node potentials, the voltage drop across resistor
$(i,j)$ is $|x_{i}-x_{j}|$. If the voltage drop is nonzero, current
will flow across the resistor. Current flow has a direction, so we
need to orient resistors in order to assign a direction to current
flows. We arbitrarily orient resistors from the lower-numbered node
to the higher-numbered one. In that direction, the directional voltage
drop across $(i,j)$ is 
\[
x_{\min(i,j)}-x_{\max(i,j)}=\left\langle \min(i,j),-\max(i,j)\right\rangle ^{T}x\;.
\]
We denote $E=\{(i_{1},j_{1}),(i_{2},j_{2}),\ldots,(i_{m},j_{m})\}$
and we denote
\[
\tilde{U}=\left[\begin{array}{ccc}
\left\langle \min(i_{1},j_{1}),-\max(i_{1},j_{1})\right\rangle  & \cdots & \left\langle \min(i_{m},j_{m}),-\max(i_{m},j_{m})\right\rangle \end{array}\right]\;.
\]
Using this notation, the vector $\tilde{U}^{T}x$ is the vector of
all directional voltage drops. The directional current flow across
resistor $(i,j)$ is $c(i,j)(x_{\min(i,j)}-x_{\max(i,j)})$. We denote
by $C$ the diagonal matrix with $C_{k,k}=c(i_{k},j_{k})$. Then the
vector $C\tilde{U}^{T}x$ is the vector of directional current flow
across the $m$ resistors.

Next, we compute nodal currents, the net current flowing in or out
of each node. If we set the potential of node $i$ to $x_{i}$, this
means that we have connected node $i$ to a voltage source, such as
a battery. The voltage source keeps the potential at $i$ at exactly
$x_{i}$ volts. To do so, it might need to send current into $x_{i}$
or to absorb currect flowing out of $x_{i}$. These current flows
also have a direction: we can either compute the current flowing into
$i$, or the current flowing out of $i$. We will arbitrarily compute
the current flowing into $i$. How much current flows into node $i$?
Exactly the net current flowing into it from its neighbors in the
circuit, minus the current flowing from it to its neighbors. Let $(i_{k},j_{k})\in E$.
If $i_{k}<j_{k}$, then $\left(C\tilde{U}^{T}x\right)_{k}$ represents
the current $c(i_{k},j_{k})(x_{\min(i_{k},j_{k})}-x_{\max(i_{k},j_{k})})$
flowing from $i_{k}$ to $j_{k}$, which is positive if $x_{i_{k}}>x_{j_{k}}$.
If $i_{k}>j_{k}$ then current flowing from $i_{k}$ to $j_{k}$ will
have a negative sign (even though it too is flowing out of $i_{k}$if
$x_{i_{k}}>x_{j_{k}}$) and we have to negate it before we add it
to the total current flowing from $i_{k}$ to its neighbors, which
is exactly the net current flowing into $i_{k}$ from the voltage
source. That is, the total current from a node $i$ to its neighbors
is
\[
\sum_{\substack{(i_{k},j_{k})\in E\\
i_{k}=i
}
}(-1)^{i>j_{k}}\left(C\tilde{U}^{T}x\right)_{k}=\tilde{U}_{i,\colon}C\tilde{U}^{T}x\;.
\]
Therefore, the vector of net node currents is exactly $\tilde{U}C\tilde{U}^{T}=A$.
We can also compute the total power dissipated by the circuit. We
multiply the current flowing across each resistor by the voltage drop
and sum over resistors, to obtain $\left(x^{T}\tilde{U}\right)\left(C\tilde{U}^{T}x\right)=x^{T}\tilde{U}C\tilde{U}^{T}x$.

We summarize the various voltage-current relationships:
\[
\begin{array}{ll}
x & \text{nodal voltages}\\
\tilde{U}^{T}x & \text{voltage drops across resistors}\\
C\tilde{U}^{T}x & \text{current flows across resistors}\\
\tilde{U}C\tilde{U}^{T}x=Ax & \text{net nodal currents}\\
x^{T}\tilde{U}C\tilde{U}^{T}x=x^{T}Ax & \text{total power dissipated by the circuit}
\end{array}
\]
These relationships provide physical interpretations to many quantities
and equations that we have already seen. Given a vector $x$ of nodal
voltages, $Ax$ is the corresponding vector of nodal currents, where
$A$ is the Laplacian of the circuit. Conversely, given a vector $b$
of nodal currents, solving $Ax=b$ determines the corresponding nodal
voltages. The maximal eigenvalue of $A$ measures the maximum power
that the circuit can dissipate under a unit vector of voltages, and
the minimal nonzero eigenvalue measures the minimum power that is
dissipated under a unit voltage vector that is orthogonal to the constant
vector (which spans the null space of $A$).

Laplacians with strictly-dominant rows arise when the boundary conditions,
the known quantities in the circuit, include a mixture of nodal voltages
and nodal currents. Suppose that we know the currents in all the nodes
except for one, say $n$, where we know the voltage, not the current.
Let us assume that the voltage at node $n$ is $5$ Volts. We want
to compute the voltages at nodes $1$ to $n-1$. We cannot use the
linear system $\tilde{U}C\tilde{U}^{T}x=Ax=b$ directly, because we
do not know $b_{n}$, and on the other hand, we do know that $x_{n}=5$.
Therefore, we drop the last row from this linear system, since we
do not know its right-hand side. We have
\[
\begin{bmatrix}A_{1,1} & \cdots & A_{1,n-1} & A_{1,n}\\
\vdots &  & \vdots & \vdots\\
A_{n-1,1} & \cdots & A_{n-1,n-1} & A_{n-1,n}
\end{bmatrix}\begin{bmatrix}x_{1}\\
\vdots\\
x_{n-1}\\
5
\end{bmatrix}=\begin{bmatrix}b_{1}\\
\vdots\\
b_{n-1}
\end{bmatrix}
\]
or
\[
\begin{bmatrix}A_{1,1} & \cdots & A_{1,n-1}\\
\vdots &  & \vdots\\
A_{n-1,1} & \cdots & A_{n-1,n-1}
\end{bmatrix}\begin{bmatrix}x_{1}\\
\vdots\\
x_{n-1}
\end{bmatrix}=\begin{bmatrix}b_{1}\\
\vdots\\
b_{n-1}
\end{bmatrix}-5\begin{bmatrix}A_{1,n-1}\\
\vdots\\
A_{n-1,n-1}
\end{bmatrix}\;.
\]
We have obtained a new square and symmetric coefficient matrix and
a known right-hand side. The new matrix is still diagonally dominant,
but now has strictly-dominant rows: if $A_{k,n}\neq0$, then row $k$
in the new coefficient matrix is now strictly-dominant, since we removed
$A_{k,n}$ from it. If we know the voltages at other nodes, we repeat
the process and drop more rows and columns, making the remaining coefficient
matrix even more dominant.

Given two resistive networks with the same number of unknown-voltage
nodes, what is the interpretation of a path embedding $\pi$? An embedding
of $G_{A}$ in $G_{B}$ shows, for each edge $(i,j)$ in $G_{A}$,
a path between $i$ and $j$ in $G_{B}$. That path can carry current
and its conductance, which is the inverse of the sum of resistances
along the path, serves as a lower bound on the conductance between
$i$ and $j$ in $G_{B}$. Intuitively, an embedding allows us to
show that $G_{B}$ is, up to a certain factor, ``as good as'' $G_{A}$,
in the sense that for a given vector of voltages, currents in $G_{B}$
are not that much smaller than the currents in $G_{A}$.

\include{chapter-laplacian-embeddings}

 \setcounter{chapter}{6}

\chapter{\label{chapter:augmented-mst}Augmented Spanning-Tree Preconditioners}

It's time to construct a preconditioner! This chapter combinatorial
algorithms for constructing preconditioners that are based on augmenting
spanning trees with extra edges. 

\section{Spanning Tree Preconditioners}

We start with the simplest support preconditioner, a maximum spanning
tree for weighted Laplacians. The construction of the preconditioner
$B$ aims to achieve three goals:
\begin{itemize}
\item The generalized eigenvalues $(A,B)$ should be at least $1$.
\item The preconditioner should be as sparse as possible and as easy to
factor as possible.
\item The product of the maximum dilation and the maximum congestion should
be low, to ensure that generalized eigenvalues of the pencil $(A,B)$
are not too large. (We can also try to achieve low stretch.)
\end{itemize}
These objectives will not lead us to a very effective preconditioner.
It usually pays to relax the second objective and make the preconditioner
a little denser in order to achive a smaller $\kappa(A,B)$. But here
we strive for simplicity, so we stick with these objectives.

We achieve the first objective using a simple technique. Given $A$,
we compute its canonical incidence factor $U$. We construct $V$
by dropping some of the columns of $U$. If we order the columns of
$U$ so that the columns that we keep in $V$ appear first, then 
\[
V=U\left(\begin{array}{c}
I\\
0
\end{array}\right)\;.
\]
This is a subset preconditioner, so 
\[
\lambda(A,B)\geq\sigma^{-1}(B,A)\geq\left\Vert \left(\begin{array}{c}
I\\
0
\end{array}\right)\right\Vert _{2}^{-2}=1^{-2}=1\;.
\]
We now study the second objective, sparsity in $B$. By Lemma~\ref{lemma:preconditioner-connectedness},
we should $G_{B}$ as connected as $G_{A}$. That is, we cannot drop
so many columns from $U$ that connected components of $G_{A}$ become
disconnected in $G_{B}$. How sparse can we make $G_{B}$ under this
constraint? A spanning forest of $G_{A}$. That is, we can drop edges
from $G_{A}$ until no cycles remain in $G_{B}$. If there are cycles,
we can clearly drop an edge. If there are no cycles, dropping an edge
will disconnect a connected component of $G_{A}$. A spanning subgraph
with no cycles is called a spanning forest. If $G_{A}$ is connected,
the spanning forest is a spanning tree. A spanning forest $G_{B}$
of $G_{A}$ is always very sparse; the number of edges is $n$ minus
the number of connected components in $G_{A}$. The weighted Laplacian
$B$ of a spanning forest $G_{B}$ can be factored into triangular
factors in $\Theta(n)$ time, the factor requires $\Theta(n)$ words
of memory to store, and each preconditioning step requires $\Theta(n)$
arithmetic operations.

Which edges should we drop, and which edges should we keep in the
spanning forest? If $G_{B}$ is a spanning forest of $G_{A}$, than
for every edge $(i_{1},i_{2})$ there is exactly one path in $G_{B}$
between $i_{1}$ and $i_{2}$. Therefore, if $G_{B}$ is a spanning
forest, then there is a unique path embedding $\pi$ of the edges
of $G_{A}$ into paths in $G_{B}$. The dropping policy should try
to minimize the maximum congestion and dilation of the embedding (or
the stretch/crowding of the embedding). The expressions for congestion,
dilation, stretch, and crowsing are sums of ratios whose denominators
are absolute values or squares of absolute values of entries of $B$.
Therefore, to reduce the congestion, dilation, and stretch, we can
try to drop edges $(i_{1},i_{2})$ that correspond to $A_{i_{1},i_{2}}$with
small absolute values, and keep ``heavy'' edges that correspond
to entries of $A$ with large absolute values.

One class of forests that are easy to construct and that favor heavy
edges are \emph{maximum spanning forests}. A maximum spanning forest
maximizes the sum of the weights $-A_{i_{1},i_{2}}$ of the edges
of the forest. One property of maximum spanning forest is particularly
useful for us.
\begin{lem}
Let $G_{B}$ be a maximum spanning forest of the weighted (but unsigned)
graph $G_{A}$, and let $\pi(i_{1},i_{\ell})=(i_{1},i_{2},\ldots,i_{\ell})$
be the path with endpoints $i_{1}$ and $i_{\ell}$ in $G_{B}$. Then
for $j=1,\ldots,\ell-1$ we have $|A_{i_{1},i_{2}}|\leq|A_{i_{j},i_{j+1}}|$.
\end{lem}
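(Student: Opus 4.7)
I read the statement as the classical \emph{cycle property} of maximum spanning forests: if $(i_1,i_\ell)$ is an edge of $G_A$ (otherwise $|A_{i_1,i_\ell}|=0$ and the inequality is vacuous), then every edge on the unique $G_B$-path from $i_1$ to $i_\ell$ must weigh at least as much as $(i_1,i_\ell)$. I am assuming there is a small typo and the intended bound is $|A_{i_1,i_\ell}|\le |A_{i_j,i_{j+1}}|$; the proof below is for this form.

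The plan is a standard exchange argument by contradiction. Suppose some edge $e=(i_j,i_{j+1})$ along the path satisfies $|A_{i_j,i_{j+1}}| < |A_{i_1,i_\ell}|$. Consider the edge set $E(G_B) \setminus \{e\} \cup \{(i_1,i_\ell)\}$ and call this graph $G_{B'}$. I would first argue that removing $e$ from the spanning forest $G_B$ breaks one tree of $G_B$ into exactly two subtrees: since $e$ lies on the $G_B$-path from $i_1$ to $i_\ell$, the vertices $i_1$ and $i_\ell$ end up in these two distinct subtrees (if they were in the same subtree after removing $e$, then $G_B$ would have had a cycle, contradicting that $G_B$ is a forest).

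Next I would show that $G_{B'}$ is again a spanning forest of $G_A$. Adding the edge $(i_1,i_\ell)$ reconnects the two subtrees produced by removing $e$, giving back the same vertex partition into connected components as in $G_B$; since we added one edge and removed one, no cycles are created (the only way a cycle could arise is if $i_1$ and $i_\ell$ were already connected in $G_B\setminus\{e\}$, which we just ruled out). So $G_{B'}$ has the same number of edges as $G_B$ and the same component structure, hence it is a spanning forest of $G_A$.

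Finally, I would compare total weights. The sum of edge weights of $G_{B'}$ exceeds that of $G_B$ by exactly $|A_{i_1,i_\ell}| - |A_{i_j,i_{j+1}}| > 0$, which contradicts $G_B$ being a maximum spanning forest. The main subtlety to make rigorous is the first bullet, that $i_1$ and $i_\ell$ lie in different components of $G_B\setminus\{e\}$; this rests entirely on the uniqueness of the $G_B$-path between any two vertices in the same tree of the forest, a standard fact about forests. Everything else is essentially bookkeeping.
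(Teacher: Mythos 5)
Your proof is correct and is essentially the same exchange argument the paper uses: assume a path edge is lighter than $(i_1,i_\ell)$, swap the two edges to obtain another spanning forest of strictly larger weight, and contradict maximality. You are also right that the statement contains a typo and the intended inequality is $|A_{i_1,i_\ell}|\leq|A_{i_j,i_{j+1}}|$, which is the form the paper's own proof establishes.
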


\begin{proof}
Suppose for contradition that for some $j$, $|A_{i_{1},i_{\ell}}|>|A_{i_{j},i_{j+1}}|$.
If we add $(i_{1},i_{\ell})$ to $G_{B}$, we create a cycle. If we
then drop $(i_{j},i_{j+1})$, the resulting subgraph again becomes
a spanning forest. The total weight of the new spannign forest is
$|A_{i_{1},i_{\ell}}|-|A_{i_{j},i_{j+1}}|>0$ more than that of $G_{B}$,
contradicting the hypothesis that $G_{B}$ is a maximum spanning forest.
\end{proof}
It follows that in all the summations that constitute the congestion,
dilation, stretch and crowding, the terms are bounded by $1$. This
yields the following result.
\begin{lem}
Let $A$ and $B$ be weighted Laplacians with identical row sums,
such that $G_{B}$ is a maximum spanning forest of $G_{A}$. Then
\[
\kappa(A,B)\leq(n-1)m\;,
\]
where $n$ is the order of $A$ and $B$ and $m$ is the number of
nonzeros in the strictly upper triangular part of $A$.
\end{lem}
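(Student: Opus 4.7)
My plan is to bound $\kappa(A,B) = \lambda_{\max}(A,B)/\lambda_{\min}(A,B)$ by treating the two extreme generalized eigenvalues separately. The lower bound $\lambda_{\min}(A,B) \geq 1$ has essentially been established earlier in the section: since $B$'s incidence factor is obtained by dropping columns from $A$'s canonical incidence factor $U$, the subset-preconditioner argument that yielded $\lambda(A,B) \geq 1$ applies without change. So the entire task reduces to showing $\lambda_{\max}(A,B) \leq (n-1)m$.

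For the upper bound I would invoke the support-theoretic machinery of the preceding chapter (\ref{chapter:graphs-and-laplacians} and the embeddings chapter), which yields a bound of the form $\lambda_{\max}(A,B) \leq \sigma(A,B)$ and in turn $\sigma(A,B) \leq \mathrm{congestion}(\pi)\cdot\mathrm{dilation}(\pi)$ for any path embedding $\pi$ of the edges of $G_A$ into paths in $G_B$. Because $G_B$ is a spanning forest of $G_A$, every edge $(i_1,i_\ell)\in E(G_A)$ determines a \emph{unique} simple path $\pi(i_1,i_\ell)=(i_1,i_2,\ldots,i_\ell)$ in $G_B$, so the embedding is canonical and requires no design choices. (The identical-row-sum hypothesis is what ensures $A$ and $B$ share a null space, so the generalized eigenvalue problem behaves in the way the support framework requires.)

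The decisive step is then to apply the maximum-spanning-forest lemma just proved: for each edge $(i_j,i_{j+1})$ along $\pi(i_1,i_\ell)$ we have $|A_{i_j,i_{j+1}}| \geq |A_{i_1,i_\ell}|$, so every weight ratio $|A_{i_1,i_\ell}|/|A_{i_j,i_{j+1}}|$ that appears in the congestion and dilation formulas is at most $1$. This immediately yields two coarse but sufficient estimates: the dilation is bounded by the maximum number of edges on any single path in the forest, hence by $n-1$; and the congestion across any single edge of $G_B$ is a sum of ratios each $\leq 1$ indexed over the embedded paths crossing that edge, hence bounded by the total number of paths embedded, which is exactly the number of edges of $G_A$, namely $m$.

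Multiplying the two estimates gives $\lambda_{\max}(A,B)\leq(n-1)m$, and combined with $\lambda_{\min}(A,B)\geq 1$ this yields the claim. The main obstacle I anticipate is not the combinatorics above, which is routine once the previous lemma is in hand, but rather being careful about exactly which support-theoretic inequality is invoked---in particular, whether the bare congestion-times-dilation product is what the framework delivers or whether one must detour through the tighter stretch/crowding bound, and how the edge weights $c(i,j)=-A_{i,j}$ enter the denominators so that the maximum-spanning-forest property really does make every summand at most $1$.
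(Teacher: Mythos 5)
Your proposal is correct and follows essentially the same route as the paper: lower-bound $\lambda_{\min}(A,B)$ by $1$ via the subset-preconditioner argument, upper-bound $\sigma(A,B)$ by the congestion--dilation product of the unique path embedding into the forest, and use the maximum-spanning-forest lemma to make every summand at most $1$, giving dilation $\leq n-1$ and congestion $\leq m$. The paper phrases the final step as $\kappa(A,B)\leq\sigma(A,B)/\sigma(B,A)$ with $\sigma(B,A)\geq 1$, which is exactly your two-sided bound in different notation.
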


\begin{proof}
Let $W$ correspond to the path embedding of $G_{A}$ in $G_{B}$.
For an edge $(i_{1},i_{2})$ in $G_{A}$ we have
\[
\textrm{dilation}_{\pi}(i_{1},i_{2})=\sum_{{(j_{1,},j_{2})\atop (j_{1,},j_{2})\in\pi(i_{1},i_{2})}}\sqrt{\frac{A_{i_{1},i_{2}}}{B_{j_{1},j_{2}}}}\leq\sum_{{(j_{1,},j_{2})\atop (j_{1,},j_{2})\in\pi(i_{1},i_{2})}}1\leq n-1\;.
\]
The rightmost inequality holds because the number of edges in a simple
path in a graph is at most $n-1$, the number of vertices minus one.
The total number of paths that use a single edge in $G_{B}$ is at
most $m$, the number of edges in $G_{A}$, and hence the number of
paths (in fact, its not hard to see that the number of paths is at
most $m-(n-2)$). Therefore, 
\begin{eqnarray*}
\kappa(A,B) & \leq & \sigma(A,B)/\sigma(B,A)\\
 & \leq & \sigma(A,B)/1\\
 & \leq & \left(\max\left\{ 1,\max_{(i_{1},i_{2})\in G_{A}}\textrm{dilation}_{\pi}(i_{1},i_{2})\right\} \right)\left(\max\left\{ 1,\max_{(j_{1},j_{2})\in G_{A}}\textrm{congestion}_{\pi}(j_{1},j_{2})\right\} \right)\\
 & \leq & (n-1)m\;.
\end{eqnarray*}
A similar argument shows that the stretch of an edge is at most $n-1$,
and since there are at most $m$ edges, $\|W\|_{F}^{2}$ is at most
$(n-1)m+n$.
\end{proof}
Algorithms for constructing minimum spanning trees and forests can
easily be adapted to construct maximum spanning forests. For example,
Kruskal's algorithm starts out with no edges in $G_{B}$. It sorts
the edges of $G_{A}$ by weight and processes from heavy to light.
For each edge, the algorithm determines whether its endpoints are
in the same connected component of the current forest. If the endpoints
are in the same component, then adding the edge would close a cycle,
so the edge is dropped from $G_{B}$. Otherwise, the edge is added
to $G_{B}$. This algorithm requires a union-find data structure to
determine whether two vertices belong to the same connected components.
It is easy to implement the algorithm so that it performs $O(m\log m)$
operations. The main data structure in another famous algorithm, Prim's,
is a priority queue of vertices, and it can be implemented so that
it performs $O(m+n\log n)$ operations. If $A$ is very sparse, Prim's
algorithm is faster.

When we put together the work required to construct a maximum-spanning-forest
preconditioner, to factor it, the condition number of the preconditioned
system, and the cost per iteration, we can bound the total cost of
the linear solver.
\begin{thm}
Let $A$ be a weighted Laplacians of order $n$ with $n+2m$ nonzeros.
Then a minimal-residual preconditioned Krylov-subspace method with
a maximum-spanning-forest preconditioner can solve a consistent linear
system $Ax=b$ using $O((n+m)\sqrt{nm})$ operations.
\end{thm}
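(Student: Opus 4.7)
The plan is to add up the costs of the four phases of the solver --- constructing the preconditioner, factoring it, the per-iteration work, and the number of iterations to convergence --- and verify that $O((n+m)\sqrt{nm})$ dominates the total. The key combinatorial ingredient is already available: the previous lemma gives $\kappa(A,B) \le (n-1)m$ for a maximum-spanning-forest preconditioner, and the Chebyshev-based convergence bound from Chapter~\ref{ch:iterative} will turn this into a bound on the iteration count.

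First I would build the maximum spanning forest $G_B$ of $G_A$ with Kruskal's algorithm in $O(m\log m)$ time. Because $G_B$ spans every connected component of $G_A$, its Laplacian $B$ satisfies $\textrm{null}(B)=\textrm{null}(A)$, so the semidefinite version of the preconditioner-equivalence theorem applies and the consistency of $Ax=b$ is preserved. Since $G_B$ is a forest, Lemma~\ref{lemma: perfect orderings for trees} provides a perfect-elimination ordering (any depth-first postorder), so the Cholesky factor of $B$ has only $\Theta(n)$ nonzeros and can be computed in $\Theta(n)$ operations. Each preconditioned \noun{minres} iteration then costs $\Theta(n+m)$: one product with $A$ (which has $n+2m$ nonzeros), two triangular solves against the $\Theta(n)$-nonzero factor of $B$, and $\Theta(n)$ additional work for the inner products, axpys, Givens rotations, and the update of $x^{(t)}$.

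Combining the bound $\kappa(A,B) \le (n-1)m$ with the Chebyshev-based convergence estimate for preconditioned \noun{minres} gives an iteration count of $O(\sqrt{\kappa(A,B)}) = O(\sqrt{nm})$ to drive the relative residual below any fixed tolerance. Multiplying this by the per-iteration cost yields $O((n+m)\sqrt{nm})$; since $m\log m = O(m\sqrt{nm})$, the preprocessing cost is absorbed into this bound. The only real subtlety is checking that the convergence estimate, stated in Chapter~\ref{ch:iterative} for positive-definite matrices, transfers to the singular-but-consistent setting here. The maximum-spanning-forest structure is exactly what makes this work: it forces $\textrm{null}(B)=\textrm{null}(A)$, so the pencil $(A,B)$ has only positive finite nonzero eigenvalues, and the Chebyshev polynomial argument goes through after restricting to the orthogonal complement of the shared null space.
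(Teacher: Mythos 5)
Your proposal is correct and follows essentially the same route as the paper's proof: sum the construction cost (the paper quotes Prim's $O(m+n\log n)$ where you use Kruskal's $O(m\log m)$, both absorbed), the $\Theta(n)$ tree factorization, the $\Theta(n+m)$ per-iteration cost, and the $O(\sqrt{\kappa(A,B)})=O(\sqrt{nm})$ iteration bound from the preceding lemma and the Chebyshev convergence estimate. Your added remark about transferring the convergence bound to the semidefinite-but-consistent setting via $\textrm{null}(B)=\textrm{null}(A)$ is a detail the paper's proof silently omits, and it is a welcome one.
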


\begin{proof}
Constructing the preconditioner requires $O(m+n\log n)$ work. Computing
the Cholesky factorization of the preconditioner requires $\Theta(n)$
work. The cost per iteration is $\Theta(n+m)$ operations, because
$A$ has $\Theta(n+m)$ nonzeros and the factor of the preconditioner
only $\Theta(n)$. The condition number of the preconditioned system
is bounded by $nm$, so the number of iteration to reduce the relative
residual by a constant factor is $O(\sqrt{nm})$. Thus, the total
solution cost is
\[
O(m+n\log n)+\Theta(n)+O((n+m)\sqrt{nm})=O((n+m)\sqrt{nm})\;.
\]
\end{proof}
Can we do any better with spanning forest preconditioner? It seems
that the congesion-dilation-product bound cannot give a condition-number
bound better than $O(mn)$.\marginpar{Is there a lower bound?} But
the stretch bound can. Constructions for \emph{low-stretch trees}
can construct a spanning forest $G_{B}$ for $G_{A}$ such that 
\[
\sum_{(i_{1},i_{2})\in G_{A}}\textrm{stretch}_{\pi}(i_{1},i_{2})=O\left((m+n)(\log n\log\log n)^{2}\right)\;,
\]
where $\pi$ is the embedding of edges of $G_{A}$ into paths in $G_{B}$.
Like maximum spanning forests, lowe-stretch forests also favor heavy
edges, but their optimization criteria are different. The number of
operations required to construct such a forest is $O((m+n)\log^{2}n)$.
The constructoin details are considerably more complex than those
of maximum spanning forests, so we do not give them here. The next
theorem summarizes the total cost to solve a linear system with a
low-stretch forest preconditioner.
\begin{thm}
Let $A$ be a weighted Laplacians of order $n$ with $n+2m$ nonzeros.
A minimal-residual preconditioned Krylov-subspace method with a low-stretch-forest
preconditioner can solve a consistent linear system $Ax=b$ using
$O((n+m)^{1.5}(\log n\log\log n)$ operations.
\end{thm}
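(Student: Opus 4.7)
The plan is to mirror the proof of the previous theorem (the maximum-spanning-forest solver), swapping in the low-stretch forest guarantee wherever a condition-number bound is needed, and updating the construction cost accordingly.

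\emph{Cost accounting.} I would decompose the total cost into four pieces: (i) constructing the low-stretch spanning forest $G_B$, which is $O((n+m)\log^2 n)$ by the quoted construction; (ii) computing the Cholesky factorization of $B$, which is $\Theta(n)$ because $G_B$ is a forest and hence (by the lemma on perfect elimination orderings for trees) admits a fill-free postorder; (iii) the per-iteration cost, which is $\Theta(n+m)$ for a multiplication by $A$ plus $\Theta(n)$ for two triangular solves against the $\Theta(n)$-nonzero factor of $B$; and (iv) the number of iterations to reduce the preconditioned relative residual by a constant factor, which by the Chebyshev bound is $O(\sqrt{\kappa(A,B)})$.

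\emph{Bounding $\kappa(A,B)$ via stretch.} Because $G_B$ is a spanning subgraph of $G_A$ and the Laplacian construction preserves row sums, the canonical incidence factor $V$ of $B$ is obtained from that of $A$ by deleting columns; this is a subset preconditioner, so, exactly as in the maximum-spanning-forest proof, $\lambda_{\min}(A,B)\geq 1$ and therefore $\kappa(A,B)\leq\sigma(A,B)$. For $\sigma(A,B)$ I would invoke the splitting/path-embedding lemma from the Laplacian-embeddings chapter: letting $\pi$ be the unique embedding of each edge of $G_A$ into the corresponding path in the forest $G_B$ and $W$ the associated embedding matrix, the same Frobenius-norm bound used at the end of the MSF proof gives
\[
\sigma(A,B)\leq\|W\|_F^2\leq n+\sum_{(i_1,i_2)\in G_A}\mathrm{stretch}_\pi(i_1,i_2)=O\!\left((n+m)(\log n\log\log n)^2\right),
\]
where the last equality is precisely the quoted low-stretch guarantee. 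Hence $\kappa(A,B)=O((n+m)(\log n\log\log n)^2)$ and the iteration count in (iv) is $O(\sqrt{n+m}\,\log n\log\log n)$.

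\emph{Assembling the bound.} Combining the four pieces, the total work is
\[
O\!\left((n+m)\log^2 n\right)+\Theta(n)+\Theta(n+m)\cdot O\!\left(\sqrt{n+m}\,\log n\log\log n\right),
\]
in which the last term dominates (since $\sqrt{n+m}\,\log n\log\log n$ outgrows $\log^2 n$), yielding the advertised $O((n+m)^{1.5}\log n\log\log n)$.

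\emph{Main obstacle.} The bookkeeping of costs (i)--(iv) is entirely routine given the earlier chapters, and the subset-preconditioner half of the $\kappa$-bound is an immediate rerun of the MSF argument. The one non-routine ingredient is Step~2's upper bound $\sigma(A,B)\leq\|W\|_F^2\leq n+\sum\mathrm{stretch}_\pi$: this is exactly the content of the path-embedding splitting lemma from the Laplacian-embeddings chapter, and it must be applied correctly (in particular, the additive $n$ from the vertex-vector columns of the canonical incidence factorization is absorbed by the $O(n+m)$ factor, so no asymptotic loss occurs). With that lemma in hand, the rest is arithmetic.
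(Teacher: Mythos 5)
Your proposal is correct and follows essentially the same route as the paper's proof: the same four-way cost decomposition (construction, factorization, per-iteration cost, iteration count via $O(\sqrt{\kappa(A,B)})$), with the final term dominating. The only difference is that you spell out the derivation of $\kappa(A,B)=O((m+n)(\log n\log\log n)^2)$ from the subset-preconditioner bound and the stretch/Frobenius-norm bound, which the paper simply asserts; that elaboration is consistent with the earlier chapters and correct.
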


\begin{proof}
Constructing the preconditioner requires $O((m+n)\log^{2}n)$ work.
Computing the Cholesky factorization of the preconditioner requires
$\Theta(n)$ work. The cost per iteration is $\Theta(n+m)$. The condition
number of the preconditioned system is $O((m+n)(\log n\log\log n)^{2})$.
Thus, the total solution cost is
\begin{multline*}
O((m+n)\log^{2}n)+\Theta(n)+O\left((n+m)\sqrt{(m+n)(\log n\log\log n)^{2}}\right)=\\
O\left((n+m)^{1.5}(\log n\log\log n\right)\;.
\end{multline*}
\end{proof}

\section{Vaiyda's Augmented Spanning Trees}

The analysis of spanning-tree preconditioners shows how they can be
improved. The construction of the preconditioner is cheap, the factorization
of the preconditioner is cheap, each iteration is cheap, but the solver
performs many iterations. This suggests that it might be better to
make the preconditioner a bit denser. This would make the construction
and the factorization more expensive, and it would make every iteration
more expensive. But if we add edges cleverly, the number of iterations
can be dramatically reduced.

The following algorithm adds edges to a maximum spanning tree in an
attempt to reduce the bounds on both the congestion and the dilation.
The algorithm works in two phases. In the first phase, the algorithm
removes edges to partition the tree into about $t$ subtrees of roughly
the same size. In the second phase, the algorithm adds the heaviest
edge between every two subtrees. The value $t$ is a parameter that
controls the density of the preconditioner. When $t$ is small, the
preconditioner remains a tree or close to a tree. As $t$ grows, the
preconditioner becomes denser and more expensive to construct and
to factor, the cost of every iteration grows, but the number of iterations
shrinks.

The partitioning algorithm, called \noun{TreePartition}, is recursive
and works as follows. It maintains an integer vector $s$. The element
$s_{i}$ is the number of vertices in the subtree rooted at $i$.
We begin by calling \noun{TreePartition} on the root of the maximum
spanning tree. The algorithm starts processing

\begin{algorithm}
\noindent \noun{TreeParition}(vertex $i$, integer array $s$)\\
~~$s_{i}\leftarrow1$\\
~~for each child $j$ of $i$\\
~~~~if \noun{$(s_{j}>n/t+1)$}\\
\noun{~~~~~~TreePartition}($j$,$s$)

\caption{}
\end{algorithm}

\section{Notes and References}

The low-stretch forests are from Elkin-Emek-Spielman-Teng, STOC 2005.
This is an improvement over Alon-Karp-Peleg-West.

\appendix

\chapter{Linear Algebra Notation and Definitions}

This appendix provides notation, definitions and some well known results,
mostly in linear algebra. We assume that all matrices and vectors
are real unless we explicitly state otherwise.

A square matrix is symmetric if $A_{ij}=A_{ji}$.

\section{Eigenvalues and Eigenvectors}

A square matrix is \emph{positive definite} if $x^{T}Ax>0$ for all
$x$ and \emph{positive semidefinite} if $x^{T}Ax\geq0$ for all $x$.

The \emph{eigendecomposition} of a square matrix is a factorization
$A=V\Lambda V^{-1}$ where $\Lambda$ is diagonal. The columns of
$V$ are called \emph{eigenvectors} of $A$ and the diagonal elements
of $\Lambda$ are called the \emph{eigenvalues} of $A$. A real matrix
may have complex eigenvalues. The expression $\Lambda(A)$ denotes
the set of eigenvalues of $A$.

Not all matrices have an eigendecomposition, but every symmetric matrix
has one. In particular, the eigenvalues of symmetric matrices are
real and their eigenvectors are orthogonal to each other. Therefore,
the eigendecomposition of symmetric matrices is of the form $A=V\Lambda V^{T}$.
The eigenvalues of positive definite matrices are all positive, and
the eigenvalues of positive semidefinite matrices are all non-negative.

For any matrix $V$, the product $VV^{T}$ is symmetric and positive
semidefinite.

\section{The Singular Value Decomposition}

Every matric $A\in\mathbb{C}^{m\times n}$, $m\geq n$ has a \emph{singular
value decomposition} (SVD) $A=U\Sigma V^{*}$ where $U\in\mathbb{C}^{m\times n}$
with orthonormal columns, $V\in\mathbb{C}^{n\times n}$ with orthonormal
columns, and $\Sigma\in\mathbb{R}^{n\times n}$ is non-negative and
diagonal. (This decomposition is sometimes called the \emph{reduced}
SVD, the full one being with a rectangular $U$ and an $m$-by-$n$
$\Sigma$.) The columns of $U$ are called \emph{left singular vectors},
the columns of $V$ are called right singular vectors, and the diagonal
elements of $\Sigma$ are called \emph{singular values}. The singular
values are non-negative. We denote the set of singular values of $A$
by $\Sigma(A)$.

\section{Generalized Eigenvalues}

Preconditioning involves two matrices, the coefficient matrix and
the preconditioners. The convergence of iterative linears solvers
for symmetric semidefinite problems depends on the generalized eigenvalues
of the pair of matrices. A pair $(S,T)$ of matrices is also called
a pencil.
\begin{defn}
Let $S$ and $T$ be $n$-by-$n$ complex matrices. We say that a
scalar $\lambda$ is a \emph{finite generalized eigenvalue} of the
matrix pencil (pair) $(S,T)$ if there is a vector $v\neq0$ such
that 
\[
Sv=\lambda Tv
\]
 and $Tv\neq0$. We say that $\infty$ is a \emph{infinite generalized
eigenvalue} of $(S,T)$ if there exist a vector $v\neq0$ such that
$Tv=0$ but $Sv\neq0$. Note that $\infty$ is an eigenvalue of $(S,T)$
if and only if $0$ is an eigenvalue of $(T,S)$. The finite and infinite
eigenvalues of a pencil are \emph{determined eigenvalues} (the eigenvector
uniquely determines the eigenvalue). If both $Sv=Tv=0$ for a vector
$v\neq0$, we say that $v$ is an \emph{indeterminate eigenvector},
because $Sv=\lambda Tv$ for any scalar $\lambda$.
\end{defn}

We order from smallest to largest. We will denote the $k$th eigenvalue
of $S$ by $\lambda_{k}(S)$, and the $k$th determined generalized
eigenvalue of $(S,T)$ by $\lambda_{k}(S,T)$. Therefore $\lambda_{1}(S)\leq\dots\leq\lambda_{l}(S)$
and $\lambda_{1}(S,T)\leq\dots\leq\lambda_{d}(S,T)$, where $l$ is
the number of eigenvalues $S$ has, and $d$ is the number of determined
eigenvalues that $(S,T)$ has.
\begin{defn}
A pencil $(S,T)$ is \emph{Hermitian positive semidefinite} (H/PSD)
if $S$ is Hermitian, $T$ is positive semidefinite, and $\mbox{null}(T)\subseteq\mbox{null}(S)$. 
\end{defn}

The generalized eigenvalue problem on H/PSD pencils is, mathematically,
a generalization of the Hermitian eigenvalue problem. In fact, the
generalized eigenvalues of an H/PSD can be shown to be the eigenvalues
of an equivalent Hermitian matrix. The proof appears in the Appendix.
Based on this observation it is easy to show that other eigenvalue
properties of Hermitian matrices have an analogy for H/PSD pencils.
For example, an H/PSD pencil, $(S,T)$, has exactly $\mbox{rank}(T)$
determined eigenvalues (counting multiplicity), all of them finite
and real.

A useful tool for analyzing the spectrum of an Hermitian matrix is
the \emph{Courant-Fischer Minimax Theorem}~\cite{MatrixComp}.
\begin{thm}
\label{thm:CF}(Courant-Fischer Minimax Theorem) Suppose that $S\in\mathbb{C}^{n\times n}$
is an Hermitian matrix, then 
\[
\lambda_{k}(S)=\min_{\dim(U)=k}\max_{x\in U}\frac{x^{*}Sx}{x^{*}x}
\]
 and 
\[
\lambda_{k}(S)=\max_{\dim(V)=n-k+1}\min_{x\in V}\frac{x^{*}Sx}{x^{*}x}\,.
\]
\end{thm}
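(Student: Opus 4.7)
The plan is to use the spectral theorem to reduce the problem to a diagonal matrix, then argue about subspace intersections. Since $S$ is Hermitian, write $S = V\Lambda V^{*}$ with $V$ unitary and $\Lambda = \mathrm{diag}(\lambda_{1},\ldots,\lambda_{n})$ with eigenvalues in nondecreasing order; let $v_{1},\ldots,v_{n}$ be the corresponding columns of $V$. For any nonzero $x \in \mathbb{C}^{n}$, substituting $y = V^{*}x$ gives the Rayleigh-quotient identity $x^{*}Sx/x^{*}x = \sum_{i}\lambda_{i}|y_{i}|^{2}/\sum_{i}|y_{i}|^{2}$, a convex combination of the eigenvalues. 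This is the single tool I will use throughout.

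To prove the first formula $\lambda_{k}(S) = \min_{\dim(U)=k}\max_{x\in U} x^{*}Sx/x^{*}x$, I split into two inequalities. For the upper bound, I exhibit a specific subspace: let $U_{0} = \mathrm{span}(v_{1},\ldots,v_{k})$, which has dimension $k$. Any $x \in U_{0}$ has $y_{k+1} = \cdots = y_{n} = 0$, so the Rayleigh quotient is a convex combination of $\lambda_{1},\ldots,\lambda_{k}$ and hence is at most $\lambda_{k}$, achieved by $x = v_{k}$. Thus the right-hand side is $\leq \lambda_{k}$. For the lower bound, let $U$ be any $k$-dimensional subspace, and consider $W = \mathrm{span}(v_{k},v_{k+1},\ldots,v_{n})$ of dimension $n-k+1$. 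Since $\dim(U) + \dim(W) = n+1 > n$, the intersection $U \cap W$ contains some nonzero vector $x$. For this $x$, only coordinates $y_{k},\ldots,y_{n}$ of $V^{*}x$ can be nonzero, so the Rayleigh quotient is a convex combination of $\lambda_{k},\ldots,\lambda_{n}$ and is at least $\lambda_{k}$. Therefore $\max_{x\in U} x^{*}Sx/x^{*}x \geq \lambda_{k}$ for every such $U$, giving the matching lower bound.

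The second formula follows by the same argument with the roles of ``small'' and ``large'' swapped, or most cleanly by applying the first formula to $-S$: the eigenvalues of $-S$ are $-\lambda_{n} \leq \cdots \leq -\lambda_{1}$, so the $(n-k+1)$-th eigenvalue of $-S$ is $-\lambda_{k}(S)$, and the min over $(n-k+1)$-dimensional subspaces of the max of $x^{*}(-S)x/x^{*}x$ equals $\min_{\dim(V)=n-k+1}(-\min_{x\in V} x^{*}Sx/x^{*}x)$, which after negation yields the max-min formula.

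The only subtle step is the dimension-count intersection argument, which is the entire substance of the lower bound; the rest is bookkeeping with the Rayleigh quotient after diagonalization. I would state that step as a small lemma (two subspaces of $\mathbb{C}^{n}$ whose dimensions sum to more than $n$ intersect nontrivially) and cite it, since it is standard. No other obstacle arises; the proof is essentially a direct consequence of the spectral theorem plus this dimension count.
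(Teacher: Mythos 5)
The paper states this theorem without proof, citing a standard reference, so there is no internal argument to compare against. Your proof is the classical one and it is correct: diagonalize by the spectral theorem, observe that the Rayleigh quotient is a convex combination of eigenvalues, get the upper bound from the explicit subspace $\mathrm{span}(v_{1},\ldots,v_{k})$, and get the lower bound from the fact that any $k$-dimensional $U$ meets $\mathrm{span}(v_{k},\ldots,v_{n})$ nontrivially because the dimensions sum to $n+1$. The reduction of the max--min formula to the min--max formula applied to $-S$, via $\lambda_{n-k+1}(-S)=-\lambda_{k}(S)$, is also handled correctly. The only cosmetic points worth adding in a written version are that the inner $\max$ and $\min$ range over nonzero $x$ (the Rayleigh quotient is undefined at $0$) and that these extrema are attained because the quotient is continuous on the compact unit sphere of the subspace; neither affects the substance of the argument.
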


As discussed above, the generalized eigenvalue problem on H/PSD pencils
is a generalization of the eigenvalue problem on Hermitian matrices.
Therefore, there is a natural generalization of Theorem~\ref{thm:CF}
to H/PSD pencils, which we refer to as the \emph{Generalized Courant-Fischer
Minimax Theorem.} We now state the theorem. For completeness the proof
appears in the Appendix.
\begin{thm}
\label{thm:gen-CF}(Generalized Courant-Fischer Minimax Theorem) Suppose
that $S\in\mathbb{C}^{n\times n}$ is an Hermitian matrix and that
$T\in\mathbb{C}^{n\times n}$ is an Hermitian positive semidefinite
matrix such that $\text{\emph{null}}(T)\subseteq\text{\emph{null}}(S)$.
For $1\leq k\leq\mbox{\emph{rank}}(T)$ we have 
\begin{align*}
\lambda_{k}(S,T) & =\min_{\begin{array}{c}
\dim(U)=k\\
U\perp\mbox{\emph{null}}(T)
\end{array}}\max_{x\in S}\frac{x^{*}Sx}{x^{*}Tx}
\end{align*}
 and 
\[
\lambda_{k}(S,T)=\max_{\begin{array}{c}
\dim(V)=\mbox{rank}(T)-k+1\\
V\perp\mbox{\emph{null}}(T)
\end{array}}\min_{x\in S}\frac{x^{*}Sx}{x^{*}Tx}\,.
\]
\end{thm}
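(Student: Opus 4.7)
The plan is to reduce the generalized eigenvalue problem for the H/PSD pencil $(S,T)$ to a standard Hermitian eigenvalue problem, apply the ordinary Courant--Fischer theorem (Theorem~\ref{thm:CF}), and then translate the resulting min-max back through the reduction. Concretely, I would work with the Hermitian matrix $M = T^{+1/2} S T^{+1/2}$, where $T^{+1/2}$ is the Hermitian pseudo-inverse square root of $T$ (the square root of $T^+$). The hypothesis $\text{null}(T)\subseteq\text{null}(S)$ is used precisely to make this reduction faithful: it is equivalent to $\text{range}(S)\subseteq\text{range}(T)$, which guarantees that $S$ maps $\text{range}(T)$ into itself and that no spurious eigenvalues are introduced by projecting.

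The key intermediate claim would be: the finite determined generalized eigenvalues of $(S,T)$ coincide with the eigenvalues of $M$ restricted to $\text{range}(T)$. For one direction, if $Sv=\lambda Tv$ with $v\perp\text{null}(T)$, set $y=T^{1/2}v$; then $v=T^{+1/2}y$ and multiplying $Sv=\lambda Tv$ on the left by $T^{+1/2}$ yields $My = \lambda y$ with $y\in\text{range}(T)$. For the converse, starting from $My=\lambda y$ on $\text{range}(T)$, set $v=T^{+1/2}y$ and multiply by $T^{1/2}$ to recover $P_{\text{range}(T)}Sv = \lambda Tv$; the inclusion $\text{range}(S)\subseteq\text{range}(T)$ removes the projector and gives $Sv=\lambda Tv$. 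This also tells us $\lambda_k(S,T)=\lambda_k(M\!\restriction_{\text{range}(T)})$ for $1\le k\le \text{rank}(T)$.

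Next I would apply Theorem~\ref{thm:CF} to $M$ viewed as a Hermitian operator on the $r$-dimensional space $\text{range}(T)$ (here $r=\text{rank}(T)$), obtaining
\[
\lambda_k(M\!\restriction_{\text{range}(T)}) \;=\; \min_{\substack{\dim(W)=k\\ W\subseteq\text{range}(T)}}\ \max_{y\in W,\,y\neq 0}\ \frac{y^*My}{y^*y},
\]
together with the dual max-min version. Then I would perform the change of variable $y=T^{1/2}x$, equivalently $x=T^{+1/2}y$. Since $\text{range}(T)=\text{null}(T)^\perp$ and $T^{+1/2}$ is a bijection on this subspace, the map $W\mapsto U:=T^{+1/2}W$ is a bijection between $k$-dimensional subspaces of $\text{range}(T)$ and $k$-dimensional subspaces of $\text{null}(T)^\perp$. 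Under this substitution, $y^*My = x^*Sx$ and $y^*y = x^*Tx$, so the ratios agree and the min-max for $M$ translates directly into the claimed min-max for $(S,T)$.

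The main obstacle, and the part that I expect to require care, is the bookkeeping around the singular matrix $T$: verifying that $T^{+1/2}$ behaves as a genuine inverse of $T^{1/2}$ on $\text{range}(T)$, confirming that the correspondence between generalized eigenvectors and eigenvectors of $M$ is a bijection with multiplicities preserved, and making precise use of $\text{null}(T)\subseteq\text{null}(S)$ at exactly the step where the projector $P_{\text{range}(T)}$ must be removed. Once this reduction is set up cleanly, both the min and the max characterization follow from a single application of the classical theorem (the max formula either by symmetry, applying the min formula to $(-S,T)$, or by repeating the argument with the max-min form of Theorem~\ref{thm:CF}).
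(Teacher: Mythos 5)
Your reduction is correct and is essentially the paper's own route: the text explicitly bases the generalized theorem on the fact that the determined eigenvalues of an H/PSD pencil $(S,T)$ are the eigenvalues of an equivalent Hermitian matrix, namely $T^{+1/2}ST^{+1/2}$ restricted to $\mathrm{range}(T)$, followed by the classical Courant--Fischer theorem and the change of variables $y=T^{1/2}x$. The points you flag as needing care (that $\mathrm{null}(T)\subseteq\mathrm{null}(S)$ lets you drop the projector, and that $T^{+1/2}$ inverts $T^{1/2}$ on $\mathrm{range}(T)$ so subspaces and multiplicities correspond bijectively) are exactly the right ones and all go through.
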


\end{document}